\newtheorem{theorem}{Theorem}[subsection]
\newtheorem{definition}[theorem]{Definition}
\newtheorem{lemma}[theorem]{Lemma}
\newtheorem{proposition}[theorem]{Proposition}
\newtheorem{corollary}[theorem]{Corollary}
\theoremstyle{remark}
\newtheorem{remark}[theorem]{Remark}
\theoremstyle{definition}
\newtheorem{example}[theorem]{Example}
\numberwithin{equation}{subsection}
\newcommand\numberthis{\addtocounter{equation}{1}\tag{\theequation}}
\title{Smoothness of Kuranishi atlases on Gromov-Witten moduli spaces}
\author{Robert Castellano}
\date{}
\begin{document}
\maketitle
\abstract{Kuranishi atlases were introduced by McDuff and Wehrheim to build a virtual fundamental class on moduli spaces of $J$-holomorphic curves and resolve some of the challenges in this field. This paper considers Gromov-Witten moduli spaces and shows they admit a smooth enough Kuranishi atlas to be able to define a Gromov-Witten virtual fundamental class in any virtual dimension. The key step for this result is the proof of a stronger gluing theorem.}
\tableofcontents

\section{Introduction}
The aim of this paper is to address the issue of smoothness for Kuranishi atlases in the context of Gromov-Witten moduli spaces. In \cite{mwtop, mwfund, mwiso} McDuff and Wehrheim build the theory of Kuranishi atlases on a compact metrizable space $X$. The prototype of the space $X$ for which this theory was developed is a compactified moduli space of $J$-holomorphic curves. In these papers, McDuff and Wehrheim develop the abstract framework for the theory of Kuranishi atlases and construct a virtual fundamental class for a space $X$ admitting a \textit{smooth} Kuranishi atlas (see Theorem \ref{theoremb}). The technique used is that of finite dimensional reduction and is based on the work by Fukaya-Ono \cite{fo} and Fukaya-Oh-Ohta-Ono \cite{fooo}. In \cite{notes} McDuff describes the construction of a Kuranishi atlas for the Gromov-Witten moduli space of closed genus zero curves. However, using the ``weak" gluing theorem of \cite{JHOL}, one does not obtain a smooth Kuranishi atlas, but rather a \textit{stratified smooth} Kuranishi atlas. In \cite{notes} McDuff describes how to adapt the theory of Kuranishi atlases to stratified smooth atlases and construct a virtual fundamental class in the case the space has virtual dimension zero.

\subsection{Statement of main results}
The purpose of this paper is to show that the genus zero Gromov-Witten moduli space admits a stronger structure than a stratified smooth Kuranishi atlas called a \textit{$C^1$ stratified smooth} ($C^1$ SS) Kuranishi atlas. This structure will be defined in Section \ref{c1ss} and it is shown in \cite{axiomme} that a $C^1$ SS structure is sufficient to obtain a virtual fundamental class for any virtual dimension.

\begin{theorem}\label{admitthm}
Let $(M^{2n},\omega,J)$ be a $2n$-dimensional symplectic manifold with tame almost complex structure $J$. Let $\overline{\mathcal{M}}_{0,k}(A,J)$ be the compact space of nodal $J$-holomorphic genus zero stable maps in class $A$ with $k$ marked points modulo reparametrization. Let $d=2n+2c_1(A)+2k-6$. Then $X:=\overline{\mathcal{M}}_{0,k}(A,J)$ has an oriented, $d$-dimensional, weak $C^1$ SS atlas $\mathcal{K}$.
\end{theorem}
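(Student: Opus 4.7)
The plan is to construct the $C^1$ SS atlas by refining McDuff's stratified smooth atlas from \cite{notes} piece by piece. The basic charts would be built in the standard way: near a nodal stable curve $[\mathbf{f}_0]\in X$ with $N$ nodes, I would choose a local slice $\mathcal{S}$ of nearby stable domains and an obstruction bundle $E\to\mathcal{S}$ of smooth $(0,1)$-forms supported in compact regions away from the nodes and marked points. A chart domain $U$ then consists of tuples $(a,v)$ with gluing parameter $a\in\mathbb{C}^N$ at the nodes and a section $v$ transverse to the reparametrization group, while the obstruction section $s$ is the $\bar\partial_J$-operator projected modulo $E$. Stratified smoothness along each stratum $\{a_i=0\}$ is classical, so the real issue is the behavior across strata.

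The principal step, and main obstacle, is to prove a strengthened gluing theorem asserting that the chart map $(a,v)\mapsto f_{a,v}$ and footprint map $\psi\colon s^{-1}(0)\to X$ extend to $C^1$ maps across the strata $\{a_i=0\}$. My plan is to solve the standard Newton iteration that corrects the preglued map $f_a^{\mathrm{pre}}$ to an exact $J$-holomorphic curve $f_a = \exp_{f_a^{\mathrm{pre}}}(\xi_a)$ using a right inverse $Q_a$ of the linearization $D_a$ on weighted Sobolev spaces adapted to the neck of modulus $|a|$. To promote continuity of $\xi_a$ in $a$ to $C^1$ dependence, I would differentiate the fixed-point equation $\xi_a = -Q_a\bigl(\mathcal{N}_a(\xi_a)\bigr)$ formally in $a$ and identify the putative derivative as the solution of an associated linear equation. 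The hard part will be constructing $Q_a$ with explicit $C^1$ dependence on $a$ rather than only uniform bounds, and controlling the inhomogeneous term $\partial_{a_i}\mathcal{N}_a$ in the weighted norm as $a_i\to 0$. This requires careful tracking of the weight function as the neck opens and refining the estimates behind the weak gluing theorem of \cite{JHOL} to extract derivative-level information.

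Once the refined gluing is in hand, the coordinate changes follow from the standard sum construction: for overlapping basic charts with obstruction spaces $E^\alpha,E^\beta$, the transition chart uses $E^\alpha\oplus E^\beta$ and has footprint the intersection of the two footprints. Compatibility of chart maps is implied by the uniqueness clause of the refined gluing theorem, so the $C^1$ SS estimates on the transitions reduce to those established for a single chart, and it remains only to check the weak (rather than full) axioms required by \cite{axiomme}. Orientation is induced by complex linearity of $D_a$ on each stratum and is compatible across strata because the obstruction spaces and coordinate changes preserve complex structure. Finally, the dimension $d=2n+2c_1(A)+2k-6$ is the Fredholm index of $\bar\partial_J$ on a genus zero $k$-marked domain minus the six real dimensions of $\mathrm{PSL}(2,\mathbb{C})$, matching the expected virtual dimension of $X$.
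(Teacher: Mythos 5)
Your outline follows the paper's general architecture (basic charts from a finite-dimensional reduction, sum charts, a strengthened gluing theorem as the technical core), but its central claim contains a genuine gap: the chart map is \emph{not} $C^1$ in the naive gluing parameter $a$ across the strata $\{a_i=0\}$, so the refined gluing theorem you propose to prove is false as stated. What one can actually establish (Theorem \ref{mainthm} here) is a bound of the form $\bigl\|\tfrac{d}{dR}\iota^{\delta,R}\bigr\|_{W^{1,p}}\leq C R^{-1}(\delta R)^{-2/p}$ with $|a|\sim R^{-2}$; converting to the $a$-derivative multiplies by $dR/da\sim R^{3}$, so $\partial_a$ of the glued map blows up like $|a|^{-(1-1/p)}$ as $a\to 0$. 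No choice of weighted Sobolev norm or more careful construction of $Q_a$ removes this: the pregluing itself already has unbounded $a$-derivative. The missing idea is a \emph{reparametrization of the gluing parameter} (a gluing profile): one sets $a=\psi(\widetilde a)=|\widetilde a|^{p-1}\widetilde a$ and proves that the map $\widetilde a\mapsto gl_{\psi(\widetilde a)}(\mathbf{u})\circ\kappa_{\psi(\widetilde a)}$ is $C^1$, with vanishing derivative at $\widetilde a=0$ (Corollary \ref{corc1easy}); differentiating the fixed-point equation is then done on a fixed Banach space after identifying the varying domains and bundles (the maps $\chi_t$, $\Phi_t$ of Section \ref{nsm}), another point your ``formal differentiation'' glosses over.

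Once the reparametrization enters, a second substantive step appears that your outline omits entirely: the atlas structure maps must be $C^1$ SS \emph{in the new coordinates}. Coordinate changes and the isotropy action permute marked points and change normalizations, hence act nontrivially on the gluing parameters, and one must check that the induced transition functions remain $C^1$ after conjugation by $\psi$. This is exactly the content of the reparametrized Deligne--Mumford space $\overline{\mathcal{M}}_{0,k}^{new}$ (Proposition \ref{dmlemma}), whose proof uses the multiplicativity $\psi(zw)=\psi(z)\psi(w)$ together with explicit cross-ratio identities; without it, ``compatibility is implied by the uniqueness clause of the gluing theorem'' only gives that the transition maps are well defined, not that they are $C^1$ SS. So the proposal needs both the gluing-profile reparametrization and the compatibility analysis on the parameter space before the remaining points (sum charts, orientation from complex linearity, index computation for $d$) go through as you describe.
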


The charts $U$ of $\mathcal{K}$ will lie over Deligne-Mumford space $\overline{\mathcal{M}}_{0,k}$ in the sense that there is a forgetful map
\[\pi:U\rightarrow \overline{\mathcal{M}}_{0,k}.\]
The construction of $U$ involves ``reparametrizing the gluing variables." As a preliminary result, we will give a prototype of this procedure of reparametrization on the Deligne-Mumford space $\overline{\mathcal{M}}_{0,k}$. We will show  $\overline{\mathcal{M}}_{0,k}$ admits a $C^1$ SS structure compatible with the atlas $\mathcal{K}$. The main result is the following.

\begin{proposition}\label{dm1}
Genus zero Deligne-Mumford space $\overline{\mathcal{M}}_{0,k}$ admits the structure of a $C^1$ SS manifold, denoted $\overline{\mathcal{M}}_{0,k}^{new}$, that is compatible with the $C^1$ SS atlas of Theorem \ref{admitthm} in the following sense. Let $\mathcal{K}$ denote the Kuranishi atlas on the Gromov-Witten moduli space $\overline{\mathcal{M}}_{0,k}(A,J)$ from Theorem \ref{admitthm} and $U$ a domain of $\mathcal{K}$. Then the forgetful map
\[\pi:U\rightarrow \overline{\mathcal{M}}_{0,k}^{new}\]
is $C^1$ SS.
\end{proposition}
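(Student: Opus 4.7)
The plan is to construct $\overline{\mathcal{M}}_{0,k}^{new}$ by taking the standard complex-analytic atlas on $\overline{\mathcal{M}}_{0,k}$ and reparametrizing the plumbing (gluing) coordinates near nodal curves, using the \emph{same} reparametrization of gluing variables that implicitly governs the Kuranishi charts $U$ of $\mathcal{K}$. Recall that near a stable nodal curve $C_0$ with $\ell$ nodes, a standard Deligne--Mumford chart has the form $V\times D^\ell$: here $V$ parametrizes deformations of the components that preserve marked points and nodal structure, and $a=(a_1,\ldots,a_\ell)\in D^\ell$ are complex plumbing parameters with $a_j=0$ corresponding to the $j$-th node being unsmoothed. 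The transitions between such charts are biholomorphic and preserve the stratification by combinatorial type of the nodal curve.

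Next, I would extract from the construction of $\mathcal{K}$ in Theorem \ref{admitthm} the explicit reparametrization $\Phi\colon D\to D$ of each gluing variable; concretely, it has the polar form $a\mapsto a\,\rho(|a|)$ for a function $\rho$ that is $C^1$ but not $C^\infty$ at $0$, chosen precisely so that the pregluing construction and the obstruction bundle trivializations are jointly $C^1$ in all parameters (this is exactly the improvement over the stratified smooth ``weak'' gluing of \cite{JHOL}). Define $\overline{\mathcal{M}}_{0,k}^{new}$ to be $\overline{\mathcal{M}}_{0,k}$ equipped with the atlas obtained by applying $\Phi$ factorwise to the $a_j$ in every standard chart. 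The transition maps in the new atlas take the form
\[
(v,a)\;\longmapsto\;\bigl(g(v,a),\,\Phi\circ h_v\bigl(\Phi^{-1}(a)\bigr)\bigr),
\]
with $g,h_v$ holomorphic and stratum-preserving; since $\Phi$ is $C^1$ SS and conjugation by $\Phi$ preserves these properties, the resulting transitions are $C^1$ SS, and the new atlas defines a $C^1$ SS manifold structure on $\overline{\mathcal{M}}_{0,k}$.

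For the compatibility statement, the strategy is to read the gluing parameters of $\pi$ directly off the description of $U$. By the construction of $\mathcal{K}$ in Theorem \ref{admitthm}, up to a smooth factor coming from deformations of components and from obstruction-space directions, $U$ is parametrized by the reparametrized gluing variables $\Phi(a)$; thus in the new charts the forgetful map $\pi$ factors as a projection followed by the identity on $V\times \Phi(D^\ell)$, which is $C^1$ SS by construction. I expect the main obstacle to be at the deepest strata, where several $a_j$ vanish simultaneously: one must show that the derivative of $\pi$ extends continuously there in all directions, not merely along each substratum. This amounts to verifying that the pregluing map depends on the tuple $(a_1,\ldots,a_\ell)$ through a factorwise application of $\Phi$, which I would prove by induction on $\ell$ using the joint gluing estimates underlying Theorem \ref{admitthm}, together with the fact that independent plumbing parameters correspond to independent nodes and so do not interact at first order.
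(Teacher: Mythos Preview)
Your outline matches the paper's approach: reparametrize the gluing coordinates by $\Phi(a)=|a|^{p-1}a$ and check that transitions remain $C^1$ SS. However, the sentence ``since $\Phi$ is $C^1$ SS and conjugation by $\Phi$ preserves these properties'' hides the entire content of the argument and, as stated, is a genuine gap. The inverse $\Phi^{-1}$ is \emph{not} differentiable at $0$, so for a generic stratum-preserving holomorphic map $h$ the composite $\Phi\circ h\circ\Phi^{-1}$ need not be $C^1$. What makes it work is two specific facts you have not invoked: first, the multiplicativity $\Phi(zw)=\Phi(z)\Phi(w)$ of this particular reparametrization; second, the special structure of the Deligne--Mumford transition maps, namely that each new gluing coordinate has the form $a_i'=a_i\cdot g_i(a,b)$ with $g_i$ holomorphic and \emph{nonvanishing} (because both $a_i$ and $a_i'$ cut out the same smooth boundary divisor). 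Only with these two ingredients can one write
\[
\Phi^{-1}\bigl(a_i'\bigr)=\Phi^{-1}\bigl(\Phi(\widetilde a_i)\cdot g_i(\Phi(\widetilde a),b)\bigr)=\widetilde a_i\cdot \Phi^{-1}\bigl(g_i(\Phi(\widetilde a),b)\bigr),
\]
and then observe that $\Phi^{-1}$ is smooth away from $0$ so the second factor is $C^1$ SS. The paper establishes exactly this, reducing to explicit cross-ratio computations in $\overline{\mathcal{M}}_{0,5}$ (its three configurations) rather than arguing abstractly; either route works, but you must isolate multiplicativity and the factored form of $T_A$.

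A second, smaller issue: your compatibility argument treats $\pi$ as a projection in charts, but the domain $U$ carries extra marked points $\mathbf{w}$ added to stabilize, so $\pi$ is really the composite $U\to\overline{\mathcal{M}}_{0,k+L}^{new}\to\overline{\mathcal{M}}_{0,k}^{new}$. The first arrow is indeed a projection in the reparametrized charts, but the second is the Deligne--Mumford forgetful map, and you must check it is $C^1$ SS in the new structure. This is easy for the specific $\Phi$ above (the reparametrization is applied coordinatewise and forgetting a point is a projection on cross ratios), but it fails for other gluing profiles (e.g.\ the exponential profile), so it is not automatic and should be stated. Your proposed induction on the number of nodes and the remark that ``independent plumbing parameters do not interact at first order'' are not needed once the factored form of $T_A$ is in hand.
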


It should be noted that while Theorem \ref{admitthm} and Proposition \ref{dm1} are expected to hold in higher genus as well, our constructions use the fact that $\overline{\mathcal{M}}_{0,k}$ is a smooth manifold with coordinates given by cross ratio functions. See Section \ref{smoothstructuresection} and Remark \ref{choices} for the specific uses of this structure.

This is a technical paper whose main component is strengthening the gluing theorem of \cite[Chapter 10]{JHOL} to a stronger ``$C^1$ gluing theorem." Using this stronger gluing theorem one can show that the spaces and structural maps constituting a Kuranishi atlas are $C^1$ stratified smooth. The precise statement of the $C^1$ gluing theorem is contained in Section \ref{c1gluing}; it roughly says that the gluing map is $C^1$ with respect to the gluing parameter.

Constructing a virtual fundamental cycle for a Gromov-Witten moduli space of any virtual dimension enables more applications. In the subsequent paper \cite{axiomme}, the author uses Kuranishi atlases to define genus zero Gromov-Witten invariants with homological constraints and show they satisfy the Gromov-Witten axioms of Kontsevich and Manin \cite{axioms}.

\begin{remark}
The question of how to construct a virtual fundamental class for moduli spaces of pseudoholomorphic curves has a long history, going back to the pioneering work of Fukaya-Ono \cite{fo}, Li-Tian \cite{litian}, Ruan \cite{ruanvir}, and Siebert \cite{siebert} among others. One can refer to the introduction of \cite{mwfund} for an overview of this history and the main issues of this problem. Recent discussions  have led to much new work in this field, for example \cite{fooo12, pardon, lisheng}. These papers all use finite dimensional reductions and traditional analytic techniques, rather than  the new scale Banach spaces and polyfolds of Hofer-Wysocki-Zehnder \cite{hwz1}. The McDuff-Wehrheim approach of Kuranishi atlases aims to construct the virtual fundamental class in a traditional way (i.e. as the zero set of a transverse perturbation) using the minimum amount of analysis and without relying on previous works. Their papers \cite{mwtop, mwfund, mwiso} address the topological aspects of this problem; the present paper completes the Kuranishi atlas approach by completing the necessary analysis. Using the minimum amount of analysis, we work in a stratified $C^1$ setting, rather than a $C^\infty$ setting. Fukaya-Oh-Ohta-Ono \cite{fooo12} also construct the virtual fundamental class as the zero set of a perturbed (multi)section, but uses $C^\infty$ charts and  Kuranishi structures as in \cite{fo}. In contrast,  Li and Sheng \cite{lisheng}, following the work of Ruan \cite{ruanvir}, define Gromov-Witten invariants by integrating differential forms over a so-called \textit{virtual neighborhood}, aiming to show that the lower strata can be ignored because the gluing maps decay  exponentially. Finally, Pardon \cite{pardon} works in the topological $(C^0)$ setting, constructing the virtual fundamental class as a cohomology class via a sheaf theoretic approach. This makes it unnecessary for him to address the compatibility issues between different local charts or with the isotropy group action, topics that are addressed in this paper in Section \ref{c1sscharts}.\hfill$\Diamond$
\end{remark}

Section \ref{background} contains the necessary background on Kuranishi atlases and stratified spaces. Section \ref{dm} studies genus zero Deligne-Mumford space and defines the $C^1$ SS structure of Proposition \ref{dm1}. This section can be understood without knowledge of Kuranishi atlases. In Sections \ref{coordfree} and \ref{chartsincoords} we will describe how to build the charts for the Kuranishi atlas on $X = \overline{\mathcal{M}}_{0,k}(A,J)$ described in Theorem \ref{admitthm}. Section \ref{c1sscharts} will show that these charts constitute a $C^1$ SS atlas assuming an appropriate gluing theorem. Proposition \ref{dm1} will also be proved in Section \ref{c1sscharts}. The technical gluing results are contained in Section \ref{c1gluing}. This section contains results useful beyond the context of Kuranishi atlases. Section \ref{c1gluing} can be treated as an appendix; the results of Section \ref{c1gluing} needed in the rest of the paper are stated in Section \ref{gwcharts}.\\

\noindent\textbf{Acknowledgements}: The author would like to thank his advisor Dusa McDuff for suggesting this project and giving countless comments on various versions of this paper. This paper would not be possible without these many useful conversations. Thanks also to Kenji Fukaya who pointed out an error in the original formulation of the $C^1$ gluing theorem.

\section{Background in Kuranishi atlases and stratified spaces}\label{background}
This section gives basic definitions and results regarding Kuranishi atlases and stratified spaces.
\subsection{Kuranishi atlases}\label{atlases}
The definitions of Kuranishi atlases were given in \cite{mwtop, mwfund, mwiso}. For a briefer survey (but more expansive than this paper) one can refer to \cite{notes}.

\begin{definition}\label{chart}
Let $F\subset X$ be a nonempty open subset. A \textbf{Kuranishi chart} for $X$ with footprint $F$ is a tuple $\mathbf{K} = (U,E,\Gamma,s,\psi)$ consisting of
\begin{itemize}
\item The \textbf{domain} $U$, which is a finite dimensional manifold.
\item The \textbf{obstruction space} $E$, which is a finite dimensional vector space.
\item The \textbf{isotropy group} $\Gamma$, which is a finite group acting smoothly on $U$ and linearly on $E$.
\item The \textbf{section} $s:U\rightarrow E$ which is a smooth $\Gamma$-equivariant map.
\item The \textbf{footprint map} $\psi:s^{-1}(0)\rightarrow X$ which is a continuous map that induces a homeomorphism
\[\underline{\psi}:\faktor{s^{-1}(0)}{\Gamma}\rightarrow F\]
where $F\subset X$ is an open set called the \textbf{footprint}.
\end{itemize}
The \textbf{dimension} of $\mathbf{K}$ is $\dim{\mathbf{K}} = \dim U - \dim E$.\\
Given a domain $U$ with isotropy group $\Gamma$, we will denote the quotient $\underline{U}:=\faktor{U}{\Gamma}$.
\end{definition}

Suppose that we have a finite collection of Kuranishi charts $(\mathbf{K}_i)_{i=1,\ldots,N}$ such that for each $I\subset\{1,\ldots,N\}$ satisfying $F_I:=\bigcap_{i\in I}F_i\neq\emptyset$, we have a Kuranishi charts $\mathbf{K}_I$ with
\begin{itemize}
\item Footprint $F_I$,
\item Isotropy group $\Gamma_I:=\prod_{i\in I}\Gamma_i$,
\item Obstruction space $E_I:=\prod_{i\in I}E_i$ on which $\Gamma_I$ acts with the product action.
\end{itemize}
Such charts $\mathbf{K}_I$ are known as \textbf{sum charts}. Thus, for $I\subset J$ we have a natural splitting $\Gamma_J\cong\Gamma_I\times\Gamma_{J\setminus I}$ and the induced projection $\rho_{IJ}^\Gamma:\Gamma_J\rightarrow\Gamma_I$. Moreover, we have a natural inclusion $\widehat{\phi}_{IJ}:E_I\rightarrow E_J$ which is equivariant with respect to the inclusion $\Gamma_I\hookrightarrow\Gamma_J$. Thus we can consider $E_I$ as a subset of $E_J$.

\begin{definition}
Let $X$ be a compact metrizable space.
\begin{itemize}
\item A \textbf{covering family of basic charts} for $X$ is a finite collection $(\mathbf{K}_i)_{i=1,\cdots,N}$ of Kuranishi charts for $X$ whose footprints cover $X$.
\item \textbf{Transition data} for $(\mathbf{K}_i)_{i=1,\cdots,N}$ is a collection of sum charts $(\mathbf{K}_J)_{J\subset\mathcal{I}_\mathcal{K},|J|\geq 2}$, and coordinate changes $(\widehat{\Phi}_{IJ})_{I,J\in\mathcal{I}_\mathcal{K}, I\subsetneq J}$ satisfying:
    \begin{enumerate}
    \item $\mathcal{I}_\mathcal{K} = \left\{I\subset\{1,\ldots,N\} | \bigcap_{i\in I}F_i \neq 0\right\}$.
    \item $\widehat{\Phi}_{IJ}$ is a coordinate change $\mathbf{K}_I\rightarrow\mathbf{K}_J$. For the precise definition of a coordinate change, see \cite{mwiso}. A coordinate change consists of
        \begin{enumerate}[(i)]
        \item A choice of domain $\underline{U}_{IJ}\subset \underline{U}_I$ such that $\mathbf{K}_I|_{\underline{U}_{IJ}}$ has footprint $F_J$.
        \item A $\Gamma_J$-invariant submanifold $\widetilde{U}_{IJ}\subset U_J$ on which $\Gamma_{J\setminus I}$ acts freely. Let $\widetilde{\phi}_{IJ}$ denote the $\Gamma_J$-equivariant inclusion.
        \item A \textbf{group covering} $(\widetilde{U}_{IJ},\Gamma_J,\rho_{IJ},\rho_{IJ}^\Gamma)$ of $(U_{IJ},\Gamma_I)$ where $U_{IJ}:=\pi^{-1}(\underline{U}_{IJ})\subset U_I$, meaning that $\rho_{IJ}:\widetilde{U}_{IJ}\rightarrow U_{IJ}$ is the quotient map $\widetilde{U}_{IJ}\rightarrow\faktor{\widetilde{U}_{IJ}}{\ker\rho_{IJ}^\Gamma}$ composed with a diffeomorphism $\faktor{\widetilde{U}_{IJ}}{\ker\rho_{IJ}^\Gamma}\cong U_{IJ}$ that is equivariant with respect to the induced $\Gamma_I$ action on both spaces. In particular, this implies that $\underline{\rho}:\underline{\widetilde{U}}\rightarrow\underline{U}$ is a homeomorphism (see \cite[Lemma 2.1.5]{mwiso}).
        \item The linear equivariant injection $\widehat{\phi}_{IJ}:E_I\rightarrow E_J$ as above.
    \item The inclusions $\widetilde{\phi}_{IJ},\widehat{\phi}_{IJ}$ and the covering $\rho_{IJ}$ intertwine the sections and the footprint maps in the sense that
        \begin{align*}
        s_J\circ\widetilde{\phi}_{IJ} &= \widehat{\phi}_{IJ}\circ s_I\circ\rho_{IJ}\quad& &\textnormal{on }\widetilde{U}_{IJ}\\
        \psi_J\circ\widetilde{\phi}_{IJ} &= \psi_I\circ\rho_{IJ}& &\textnormal{on }s_J^{-1}(0)\cap\widetilde{U}_{IJ} = \rho_{IJ}^{-1}(s_I^{-1}(0)).
        \end{align*}
    \item The map $s_{IJ}:=s_I\circ\rho_{IJ}$ is required to satisfy an \textbf{index condition}. This ensures that any two charts that are related by a coordinate change have the same dimension.
        \end{enumerate}
    \end{enumerate}
\end{itemize}
\end{definition}
One needs to express a way in which coordinate changes are compatible. This is described by cocycle conditions. Charts $\mathbf{K}_\alpha$ where $\alpha=I,J,K$ with $I\subset J\subset K$ a \textbf{weak cocycle condition} / \textbf{cocycle condition} / \textbf{strong cocycle condition} describe to what extent the composition $\widehat{\Phi}_{JK}\circ\widehat{\Phi}_{IJ}$ and the coordinate change $\widehat{\Phi}_{IK}$ agree. These conditions require them to agree on increasingly large subsets. See \cite{mwiso} for a complete discussion of cocycle conditions.

In constructions, one can achieve a weak cocycle condition, while a strong cocycle condition is what is required for the construction of a virtual fundamental class.
\begin{definition}
A \textbf{weak Kuranishi atlas of dimension d} on a compact metrizable space $X$ is transition data
\[\mathcal{K} = (\mathbf{K}_I,\Phi_{IJ})_{I,J\in\mathcal{I}_\mathcal{K},I\subsetneq J}\]
for a covering family $(\mathbf{K}_i)_{i=1,\ldots,N}$ of dimension d for $X$ that satisfies a weak cocycle condition. Similarly a \textbf{(strong) atlas} is required to satisfy a (strong) cocycle condition.
\end{definition}
The main theorem regarding Kuranishi atlases is the following.
\begin{theorem}\label{theoremb}
Let $\mathcal{K}$ be a oriented, weak, d dimensional smooth Kuranishi atlas on a compact metrizable space $X$. Then $\mathcal{K}$ determines a cobordism class of oriented, compact weighted branched topological manifolds, and an element $[X]_\mathcal{K}^{vir}\in \check{H}_d(X;\mathbb{Q})$. Both depend only on the oriented cobordism class of $\mathcal{K}$. Here $\check{H}_{*}$ denotes $\check{C}$ech homology.
\end{theorem}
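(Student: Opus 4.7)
The plan is to follow the McDuff--Wehrheim program: pass from the given weak atlas to a tame strong atlas, assemble a virtual neighborhood, perturb the section transversely in a manner compatible with the isotropy actions, and extract the virtual class as an inverse limit in Čech homology. Cobordism invariance is then handled by running the same construction over $[0,1]\times X$.

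First, I would upgrade the weak cocycle condition to a strong one by a shrinking procedure: iteratively replace each $U_I$ by a precompact open subset chosen so that the compositions $\widetilde{\phi}_{JK}\circ\widetilde{\phi}_{IJ}$ actually equal $\widetilde{\phi}_{IK}$ on the restricted domains, while keeping the footprints $F_I$ an open cover of $X$. This is delicate because the equalities must be arranged simultaneously across all chains $I\subset J\subset K$, but it is purely topological once the weak cocycle condition gives agreement up to restriction. The output is a tame metric atlas whose quotient $|\mathcal{K}| := \bigsqcup_I U_I / \sim$ by the coordinate changes is Hausdorff and contains $X$ as a closed subset, equipped with a Hausdorff obstruction bundle $|\mathcal{E}|\to|\mathcal{K}|$ and a canonical continuous section $\mathfrak{s}$ whose zero set is $X$.

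Second, I would use the smoothness hypothesis to construct transverse perturbations. Because the isotropy groups $\Gamma_I$ need not act freely, one works with finite-dimensional families of $\Gamma_I$-equivariant multi-sections $\nu_I$; a Sard--Smale genericity argument on the combined perturbation $\nu=(\nu_I)$ produces arbitrarily small $\nu$ such that $Z_\nu := (\mathfrak{s}+\nu)^{-1}(0)$ is a compact, oriented, $d$-dimensional weighted branched smooth submanifold of $|\mathcal{K}|$, with rational weights determined by the branches of $\nu$ and the orders $|\Gamma_I|$. The orientation of $Z_\nu$ is induced by that of $\mathcal{K}$, and tameness together with the footprint cover condition confines $Z_\nu$ to an arbitrarily small neighborhood of $X$.

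Finally, pushing the fundamental class of $Z_\nu$ forward via the footprint projection gives classes in $H_d(N_\varepsilon(X);\mathbb{Q})$ for metric neighborhoods $N_\varepsilon(X)$ of $X$; these are compatible under the inverse system of neighborhoods, so by definition of Čech homology they descend to an element $[X]^{vir}_\mathcal{K}\in\check{H}_d(X;\mathbb{Q})$. Independence from choices of shrinking and perturbation, as well as cobordism invariance, follow by applying the same machinery to a tame strong atlas on $[0,1]\times X$ interpolating between two sets of choices. The hardest steps are the shrinking/tameness argument and the multi-section perturbation theory: one must juggle the combinatorial cocycle compatibility across all $I\subset J\subset K$, the non-Hausdorff pathologies of the naive quotient, and the analytic requirement that multi-sections be transverse while respecting every $\Gamma_I$ action and every coordinate change.
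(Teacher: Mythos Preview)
Your outline is essentially correct and matches the approach the paper itself sketches: the paper does not prove this theorem but cites it as \cite[Theorem B]{mwfund} and \cite[Theorem A]{mwiso}, summarizing the construction as taming the weak atlas, passing to a reduction, building a transverse perturbation section, and reading off the class from the resulting compact weighted branched manifold. The only notable discrepancy is that you fold the \emph{reduction} step into ``assemble a virtual neighborhood,'' whereas McDuff--Wehrheim treat taming and reduction as distinct stages, and their perturbation scheme for nontrivial isotropy uses group coverings rather than multi-sections in the FOOO sense; otherwise your plan tracks the cited program.
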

In the case of trivial isotropy groups, this is \cite[Theorem B]{mwfund}. For nontrivial isotropy groups see \cite[Theorem A]{mwiso}.

The process of constructing the virtual fundamental cycle $[X]_{\mathcal{K}}^{vir}$ takes place in several steps. First, a weak atlas is \emph{tamed}, which is a procedure that shrinks the domain and implies desirable topological properties. Next, a \emph{reduction} of the atlas is constructed. In the reduced atlas one can construct an appropriate perturbation section from which one builds a compact weighted branched manifold. The virtual fundamental cycle is then built from this manifold. These processes will not be dealt with in this paper. Their role in the $C^1$ stratified smooth setting is addressed in \cite{axiomme}.

This paper will be primarily interested in the case $X = \overline{\mathcal{M}}_{0,k}(A,J)$, the compact space of nodal $J$-holomorphic genus zero stable maps on a symplectic manifold in homology class $A$ with $k$ marked points modulo reparametrization, where $J$ is a tame almost complex structure. Sections \ref{coordfree} and \ref{chartsincoords} will construct a Kuranishi atlas on this space $X$ and Section \ref{c1sscharts} will show it satisfies the necessary conditions.

\subsection{Stratified smooth spaces}\label{c1ss}
In this section we will give an overview of stratified spaces and their context in Kuranishi atlases. The following definitions are taken from \cite{notes}.
\begin{definition}
A pair $(X,\mathcal{T})$ of a topological space $X$ and a finite partially ordered set $\mathcal{T}$ is called a \textbf{stratified space with strata} $(X_T)_{T\in\mathcal{T}}$ if the following conditions hold:
\begin{enumerate}[(i)]
\item The space $X$ is a disjoint union of the strata.
\item The closure $\overline{X_T}$ is contained in $\bigcup_{S\leq T}X_S$.
\end{enumerate}
\end{definition}
\begin{example}\label{stratex}
$\mathbb{R}^k\times\mathbb{C}^{\underline{n}}$ carries a stratification in the following way:
\begin{enumerate}
\item The set $\mathcal{T}$ is the power set of $\{1,\cdots,n\}$.
\item The stratum $(\mathbb{R}^k\times\mathbb{C}^{\underline{n}})_T$ is given by
\[(\mathbb{R}^k\times\mathbb{C}^{\underline{n}})_T = \{(x,\mathbf{a})\in\mathbb{R}^k\times\mathbb{C}^n ~|~ \mathbf{a} = (a_1,\cdots,a_n), a_i\neq0\iff i\in T\}.\]
\end{enumerate}
\end{example}
\begin{example}
The genus zero Deligne-Mumford space $\overline{\mathcal{M}}_{0,k}$ admits a stratification given by
\begin{equation}\label{dmstrat}
\overline{\mathcal{M}}_{0,k} = \bigsqcup_{0\leq p \leq k-3} \mathcal{M}_p
\end{equation}
where $\mathcal{M}_p$ is the moduli space of curves with exactly $p$ nodes. This is the stratification we will use.

The genus zero Gromov-Witten moduli space $\overline{\mathcal{M}}_{0,k}(A,J)$ also admits a stratification similar to (\ref{dmstrat}):

\[\overline{\mathcal{M}}_{0,k}(A,J) = \bigsqcup_{p} \mathcal{M}_p(A,J).\]

Deligne-Mumford space $\overline{\mathcal{M}}_{0,k}$ also admits another stratification given by
\[\overline{\mathcal{M}}_{0,k} = \bigsqcup_{\substack{k\textnormal{-labelled}\\\textnormal{trees } T}} \mathcal{M}_T\]
where $\mathcal{M}_T$ is the moduli space of curves modelled on the labelled tree $T$.
\end{example}
\begin{definition}
A \textbf{stratified continuous} (resp. $\mathbf{C^1}$) map $f:(X,\mathcal{T})\rightarrow (Y,\mathcal{S})$ between stratified spaces (resp. $C^1$ manifolds) is a continuous (resp. $C^1$) map $f:X\rightarrow Y$ such that
\begin{enumerate}[(i)]
\item $f$ maps strata to strata and hence induces a map $f_{*}:\mathcal{T}\rightarrow\mathcal{S}$ such that $f(X_T)\subset Y_{f_{*}T}$.
\item The map $f_{*}$ preserves order, that is $T\leq S\Rightarrow f_{*}T \leq f_{*}S$.\footnote{This is a different definition than that in \cite{notes} where it is required that $f_{*}$ preserves \textit{strict} order. We use this definition so that relevant maps such as the evaluation map $ev:U_{I}\rightarrow M$ are stratified $C^1$.}
\end{enumerate}
\end{definition}
\begin{definition}
Let $f:U\rightarrow \mathbb{R}^k\times\mathbb{C}^{\underline{m}}$ be a stratified $C^1$ map defined on an open subset $U\subset \mathbb{R}^\ell\times \mathbb{C}^{\underline{n}}$. We call $f$ a \textbf{$\mathbf{C^1}$ stratified smooth ($\mathbf{C^1}$ SS) map} if it restricts to a smooth map $U_T=U\cap(\mathbb{R}^\ell\times\mathbb{C}^{\underline{n}})_T\rightarrow (\mathbb{R}^k\times\mathbb{C}^{\underline{m}})_{f_*T}$ on each stratum $T$.
\end{definition}

It is easy to see that the composition of two $C^1$ SS maps is $C^1$ SS. Hence one can define a \textbf{$\mathbf{C^1}$ SS manifold} as having local charts of the form $\mathbb{R}^k\times\mathbb{C}^{\underline{n}}$ as in Example \ref{stratex} and requiring that transition functions are $C^1$ SS maps. Also of use later will be the notion of a subset $Y$ being a \textbf{$C^1$ SS submanifold} of a $C^1$ SS manifold $X$. Here we say that $Y\subset X$ is a $C^1$ SS submanifold if $Y$ is a $C^1$ SS manifold with its $C^1$ SS structure induced from $X$.

One can also define the notion of a \textbf{$\mathbf{C^1}$ SS Kuranishi atlas} on a stratified space $X$ by requiring all conditions to hold in the $C^1$ SS category. In particular, the footprint map should be stratified continuous. Having all conditions hold in the $C^1$ SS category is clearly weaker than requiring they all hold in the $C^\infty$ category. In Section \ref{c1sscharts}, we will prove Theorem \ref{admitthm} which states that $X = \overline{\mathcal{M}}_{0,k}(A,J)$ admits a $C^1$ SS atlas. The analog of Theorem \ref{theoremb} for $C^1$ SS Kuranishi atlases is proved in \cite{axiomme}.

\section{Reparametrizing genus zero Deligne-Mumford space}\label{dm}
This section will construct $\overline{\mathcal{M}}_{0,k}$ using cross ratios and then use this construction to define the $C^1$ SS structure of Proposition \ref{dm1}.

\subsection{Genus zero Deligne-Mumford space via cross ratios}\label{crcoordinates}
This section describes $\overline{\mathcal{M}}_{0,k}$, the genus zero Deligne-Mumford space with $k$ marked points, in terms of cross ratios. This will be the basis of the construction in Section \ref{smoothstructuresection}. Then in Example \ref{m05} we give a key explicit example that will be the basis for proofs in Section \ref{smoothstructuresection}. For a more detailed exposition on the construction of $\overline{\mathcal{M}}_{0,k}$, see \cite[Appendix D]{JHOL}. The fact that $\overline{{\mathcal{M}}}_{0,k}$ is a manifold with coordinates given by well understood cross ratio functions is one of the reasons this paper restricts to the genus zero setting.

Given four points $z_0,z_1,z_2,z_3$ on $S^2$ such that no more than two points are equal, we define the cross ratio $w_{0123}=w(z_0,z_1,z_2,z_3)\in S^2$ by
\begin{equation}\label{crossratio}
w_{0123} = \frac{(z_1-z_2)(z_3-z_0)}{(z_0-z_1)(z_2-z_3)}.
\end{equation}
This map is invariant under M\"{o}bius transformations and is normalized so that
\[w(0,1,\infty,\lambda) = \lambda.\]
The cross ratio map also satisfies the following symmetry relations.
\begin{equation}\label{crsymmetry}
w_{jikl} = w_{ijlk} = 1-w_{ijkl},\qquad w_{ikjl} = \frac{w_{ijkl}}{w_{ijkl}-1}.
\end{equation}
Furthermore, given five points, different cross ratios satisfy the following recursion relation.
\begin{equation}\label{crrecursion}
w_{1234} = \frac{w_{0124} -1}{w_{0124} - w_{0123}}.
\end{equation}
The cross ratio map extends to a map on $\overline{\mathcal{M}}_{0,4}$ as the usual cross ratio map on smooth domains and the composition of collapsing one component with the usual cross ratio on nodal domains. Note that the cross ratios of nodal domains are $0,1,$ or $\infty$ and the cross ratio of four distinct points on a smooth domain is not $0,1,$ or $\infty$. We can further extend the cross ratio to a map on $\overline{\mathcal{M}}_{0,k}$ via the forgetful maps $\overline{\mathcal{M}}_{0,k}\rightarrow\overline{\mathcal{M}}_{0,4}$.

The set of all cross ratios $\{w_{ijkl}:\overline{\mathcal{M}}_{0,k}\rightarrow S^2\}$ gives rise to a map
\[\overline{\mathcal{M}}_{0,k}\rightarrow(S^2)^N\]
where $N=k(k-1)(k-2)(k-3)$. Knudsen \cite{knudsen} proves that this map is injective and its image is a smooth submanifold. Hence $\overline{\mathcal{M}}_{0,k}$ inherits a smooth structure from $(S^2)^N$ via cross ratio maps. This proof is based on an explicit construction of coordinate charts. Given a point $\mathbf{w}_0$ in the image of this map, Knudsen chooses $k-3$ cross ratios from which all other cross ratios are a smooth function of these $k-3$ cross ratios; this is done using the symmetry and recursion relations (\ref{crsymmetry}) and (\ref{crrecursion}). The $k-3$ cross ratios are chosen to satisfy the following conditions.
\begin{enumerate}[(a)]
\item\label{cr1} For each edge, a cross ratio $w_{ijkl}$ is chosen such that $w_{ijkl}$ is $0$ at $\mathbf{w}_0$ and is such that $w_{ijkl}(\mathbf{w})=0$ if and only if $\mathbf{w}$ is modelled on a tree with this edge.
\item\label{cr2} The remaining cross ratios determine, up to complex automorphism, the position of the special points. These cross ratios will not be $0,1$ or $\infty$ in a neighborhood of $\mathbf{w}_0$.
\end{enumerate}
The coordinates of type (\ref{cr1}) are called \textit{gluing parameters} and will be denoted with variables $\mathbf{a}$ in this section and Section \ref{chartsincoords}. The coordinates of type (\ref{cr2}) are denoted with variables $\mathbf{b}$.

\begin{example}[Coordinates on $\overline{\mathcal{M}}_{0,5}$]\label{m05}
In this example we will explicitly describe coordinates on $\overline{\mathcal{M}}_{0,5}$. There are three possible tree configurations in $\overline{\mathcal{M}}_{0,5}$; we will describe coordinates in a neighborhood of each configuration in terms of cross ratios.
\begin{figure}
\begin{center}
\includegraphics[scale=0.65]{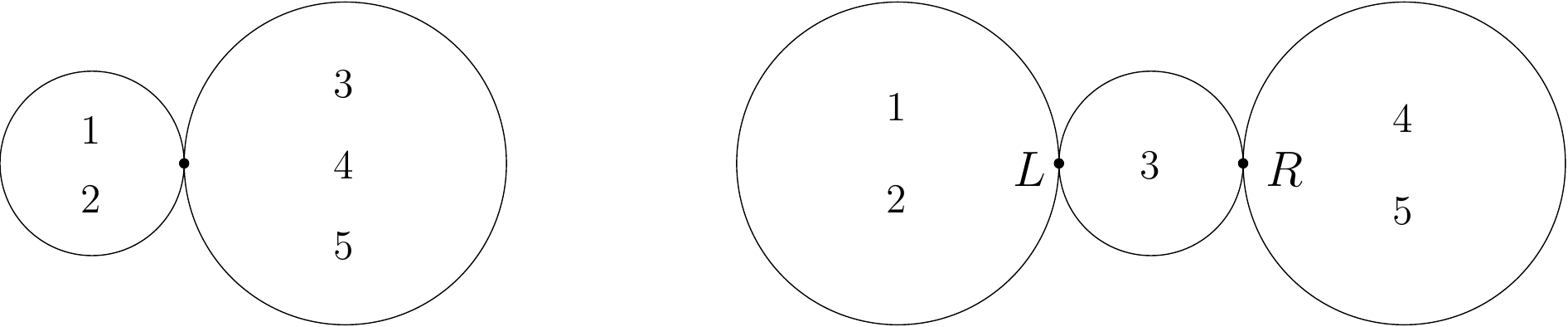}
\caption{Configuration 1 (left) and Configuration 2 (right)}\label{m05fig}
\end{center}
\end{figure}\\

\noindent\underline{Configuration 1}: This tree structure is two vertices with marked points as in Configuration $1$ in Figure \ref{m05fig}. There are five distinct cross ratios: $w_{1453}, w_{2453}, w_{1342}, w_{1352}, w_{1452}$ by which we mean all others are obtained from these via the symmetry relations (\ref{crsymmetry}). For simplicity, we define variables
\begin{equation}\label{crvariables}
x:=w_{1453},\quad y:=w_{2453},\quad z:=w_{1342},\quad u:=w_{1352},\quad v:=w_{1452}.
\end{equation}
According to the scheme given by conditions (\ref{cr1}) and (\ref{cr2}) above, the variables $z,u,v$ are gluing parameters (and are $0$ for this tree structure), while $x,y$ are $\mathbf{b}$ variables (and are away from $0,1$ and $\infty$ in a neighborhood of this tree structure). Therefore, coordinates in this configuration are given by a choice of one of $x,y$ and one of $z,u,v$. We can use the recursion relation (\ref{crrecursion}) and symmetry relations (\ref{crsymmetry}) to find the relations between these variables. For example, (\ref{crrecursion}) implies that
\[w_{2453} = \frac{w_{1243}-1}{w_{1243} - w_{1245}}.\]
Then applying (\ref{crsymmetry}), we see that
\begin{align*}
w_{2453} &= \frac{-w_{1234}}{1-w_{1234} - \frac{w_{1425}}{w_{1425}-1}}\\
        &=\frac{-\frac{w_{1324}}{w_{1324}-1}}{1-\frac{w_{1324}}{w_{1324}-1}-\frac{w_{1425}}{w_{1425}-1}}\\
        &=\frac{\frac{w_{1342}-1}{-w_{1342}}}{1+\frac{w_{1342}-1}{-w_{1342}}-\frac{w_{1425}}{w_{1425}-1}}.
\end{align*}
Simplifying and using the variables from $(\ref{crvariables})$ we get the relation
\begin{equation}\label{m0512}
y=\frac{v-vz}{v-vz+z}.
\end{equation}
By similar calculations, we see
\begin{align}
\label{m0511}z&=\frac{xv-v}{xv-x}\\
\label{m0513}x&=\frac{z-u}{zu-u}\\
\label{m0514}u&=\frac{x+y}{xy-x}\\
\label{m0515}v&=xu.
\end{align}
One can use these relations to express any variable as a function of a chosen set of coordinates. For example, choosing coordinates $x,z$ we see that
\begin{align}
\nonumber y &= \frac{v-vz}{v-vz+z}\quad & \textnormal{using (\ref{m0512})}\\
\nonumber  & =\frac{\frac{xz}{xz-x+1}-\frac{xz}{xz-x+1}z}{\frac{xz}{xz-x+1}-\frac{xz}{xz-x+1}z+z}\quad & \textnormal{using (\ref{m0511})}\\
\label{m05y} &=x-xz&\\
\label{m05u}u&=\frac{z}{zx-x+1}\quad&\textnormal{using (\ref{m0513})}\\
\label{m05v}v&=\frac{xz}{xz-x+1}\quad&\textnormal{using (\ref{m0511})}
\end{align}

\noindent\underline{Configuration 2}: This tree structure consists of three vertices, two edges $L$ and $R$, and marked points as in Configuration 2 in Figure \ref{m05fig}. As in the previous case, there are five cross ratios, $w_{1453}, w_{2453}, w_{1342}, w_{1352}, w_{1452}$ from which all others are obtained from these via the symmetry relations (\ref{crsymmetry}). Using the same notation as (\ref{crvariables}) and the scheme given by conditions (\ref{cr1}) and (\ref{cr2}), $x,y,z,$ and $u$ are all $\mathbf{a}$ variables. All are $0$ for this configuration; when the node $L$ is resolved $x$ and $y$ become nonzero and when the node $R$ is resolved $z$ and $u$ become nonzero. Therefore, the choice of $x$ or $y$ and the choice of $z$ or $u$ gives coordinates in a neighborhood of this configuration. Relations (\ref{m0511}), (\ref{m0512}), (\ref{m0513}), (\ref{m0514}) continue to hold. Note that $v$ is $0$ for this configuration and continues to be $0$ when only one node is resolved. Thus, it cannot be chosen as a coordinate.

Using (\ref{m0511}), (\ref{m0512}), (\ref{m0513}),( \ref{m0514}), one can express any variable as a chosen set of coordinates. For example, just as in Configuration 1, choosing coordinates $x,z$ the variables $y,u,v$ are given by (\ref{m05y}), (\ref{m05u}), (\ref{m05v}).\\

\noindent\underline{Configuration 3}: This tree configuration has one vertex, that is the domain is smooth. In the notation of (\ref{crvariables}), $x,y,z,u,v$ are all $\mathbf{b}$ variables and are not $0,1,$ or $\infty$. The choice of any two gives coordinates.
\end{example}

\subsection{Changing the smooth structure on $\overline{\mathcal{M}}_{0,k}$}\label{smoothstructuresection}
We will now use the description of $\overline{\mathcal{M}}_{0,k}$ via cross ratios to define a new (but diffeomorphic) smooth structure on $\overline{\mathcal{M}}_{0,k}$. Modifying the smooth structure on $\overline{\mathcal{M}}_{0,k}$ will serve as a prototype of giving the charts $U$ of our Gromov-Witten Kuranishi atlas a $C^1$ SS structure. The $C^1$ SS structure on $\overline{\mathcal{M}}_{0,k}$ will be that which is required for Proposition \ref{dm1}.

We will first define a new smooth structure on $S^2$. Consider $S^2=\mathbb{C}\cup\{\infty\}$. We will modify the structure around $0,1,\infty\in S^2$. Let $O\subset \mathbb{C}$ be a neighborhood of $0$ and $\phi:O\rightarrow S^2$ be a conformal chart around $0\in S^2$. Let $p>2$ and $\psi$ be the ``reparametrization" map
\begin{align}
\nonumber \psi:\mathbb{C} &\rightarrow \mathbb{C}\\
z &\mapsto |z|^{p-1}z. \label{psireparam}
\end{align}
The map $\psi$ fixes $0$ and away from $0$ in polar coordinates $\psi$ is of the form
\[\psi(r,\theta) = (r^p,\theta).\]
The following are the key properties of $\psi$ that we will use:
\begin{itemize}
\item $\psi$ is multiplicative, that is
\begin{equation}\label{multprop}
\psi(zw) = \psi(z)\psi(w).
\end{equation}
\item $\psi$ fixes $0$.
\item $\psi$ is $C^1$ at $0$ and $C^\infty$ everywhere else.
\item $\psi^{-1}$ is $C^\infty$ away from $0$, but not differentiable at $0$.
\end{itemize}

Charts around $0\in S^2$ for our new smooth structure will be given by
\begin{equation}\label{newcharts}
\widetilde{\phi} = \phi\circ\psi.
\end{equation}
where $\phi$ is a conformal chart as above. Charts around $1$ and $\infty$ are done in a similar manner by composing with a M\"{o}bius transformation. Charts around other points remain unchanged. The next lemma states that the charts $\widetilde{\phi}$ define a $C^1$ SS structure on $S^2$.

\begin{lemma}\label{s2c1}
Let $O\subset \mathbb{C}$ be a neighborhood of $0$ and $\phi:O\rightarrow S^2$ be a conformal chart around $0\in S^2$. Define corresponding charts $\widetilde{\phi} = \phi\circ\psi$ by precomposing with the reparametrization map $\psi$ from $(\ref{psireparam})$. Define charts $\widetilde{\phi}$ around $1,\infty\in S^2$ by composing with M\"{o}bius transformations. Define charts not containing $0,1,\infty$ using charts from the standard smooth structure on $S^2$. Then these charts define a $C^1$ SS manifold structure on $S^2$ where $S^2$ has stratification $X_0 = \{0,1,\infty\}, X_1= S^2\setminus \{0,1,\infty\}$.
\end{lemma}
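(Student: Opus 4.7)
The plan is to describe an explicit atlas on $S^2$ using the modified charts $\widetilde{\phi}_0, \widetilde{\phi}_1, \widetilde{\phi}_\infty$ around the three special points together with standard smooth charts around all other points, endow each chart domain with the induced stratification as in Example \ref{stratex}, and verify that every transition function is $C^1$ SS. A crucial organizational point is to choose the atlas so that each special point $0, 1, \infty$ lies in the image of exactly one chart (namely the corresponding $\widetilde{\phi}$); this is harmless because standard charts around other points can always be taken small enough to avoid $\{0, 1, \infty\}$.

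First I would verify that each new chart is a homeomorphism onto its image that preserves strata. In polar coordinates $\psi(re^{i\theta}) = r^p e^{i\theta}$, so $\psi$ is a homeomorphism of $\mathbb{C}$ with continuous inverse $\psi^{-1}(Re^{i\theta}) = R^{1/p} e^{i\theta}$ and $\psi^{-1}(0) = 0$; composing with the conformal $\phi$ then shows $\widetilde{\phi}_0 = \phi \circ \psi$ is a homeomorphism onto its image. Shrinking $O$ if necessary, this image is a neighborhood of $0 \in S^2$ disjoint from $\{1, \infty\}$. Giving $O$ the stratification $\{0\} \sqcup (O \setminus \{0\})$ coming from the model $\mathbb{C}^{\underline{1}}$, we see $\widetilde{\phi}_0(0) = 0 \in X_0$ and $\widetilde{\phi}_0(O \setminus \{0\}) \subset X_1$, so strata are preserved. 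The same argument applies to $\widetilde{\phi}_1, \widetilde{\phi}_\infty$ after pre- and post-composition with the chosen M\"obius transformations, while standard smooth charts around other points trivially respect the stratification (their chart domains consist entirely of the top stratum).

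Next I would check compatibility on chart overlaps. Because of the single-chart-per-special-point arrangement, the images in $S^2$ of any two distinct charts in the atlas intersect inside $X_1 = S^2 \setminus \{0, 1, \infty\}$. Pulling back through either chart places the overlap region entirely within the top stratum of the chart's domain, i.e., away from $0$ in $\mathbb{C}$ when the chart is one of the $\widetilde{\phi}$'s. On this top stratum both $\psi$ and $\psi^{-1}$ are $C^\infty$, and the remaining pieces in any transition (conformal charts, M\"obius transformations, and standard smooth charts) are $C^\infty$. Hence every transition map is the restriction of a $C^\infty$ map to an open subset of the top stratum of its domain, and in particular is $C^1$ SS (with nothing to check on the lower stratum since the overlap does not meet it). This yields the $C^1$ SS manifold structure.

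The only potential obstacle is that $\psi^{-1}$ fails to be differentiable at $0$, which would be fatal for any transition that forced us to evaluate $\psi^{-1}$ there; the single-chart-per-special-point arrangement above is precisely what prevents this from ever occurring, and so the lemma reduces to a routine smoothness check on the top stratum.
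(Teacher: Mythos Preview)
Your argument is correct but takes a different route from the paper. You exploit the freedom to pick a \emph{minimal} atlas: one modified chart $\widetilde{\phi}$ per special point, plus standard charts elsewhere shrunk to miss $\{0,1,\infty\}$. With that arrangement every overlap lies entirely in the top stratum, where $\psi$ and $\psi^{-1}$ are $C^\infty$, so the transition maps are smooth and the $C^1$ SS condition is trivially satisfied. This is a clean and elementary way to get \emph{some} $C^1$ SS structure on $S^2$.

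The paper instead checks the harder case you deliberately avoid: two modified charts $\widetilde{\phi}_1,\widetilde{\phi}_2$ centered at the \emph{same} special point. The overlap then contains $0$, and one must verify that the transition $\widetilde{T}=\psi^{-1}\circ T\circ\psi$ is $C^1$ there. Since $T$ is a M\"obius transformation fixing $0$, it has the form $T(z)=z/(bz+d)$ with $d\neq 0$, and the multiplicativity property (\ref{multprop}) gives $\widetilde{T}(z)=z/\psi^{-1}(b\psi(z)+d)$, which is visibly $C^1$ at $0$ because $b\psi(z)+d$ stays away from $0$. This computation buys two things your approach does not: it shows the resulting $C^1$ SS structure is independent of which conformal chart $\phi$ one started with, and it serves as the direct prototype for the transition-map analysis in Proposition~\ref{dmlemma}, where one cannot sidestep the issue by using a single chart.
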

\begin{proof}
To check that these charts define a $C^1$ SS structure, we must check that the induced transition maps are $C^1$ SS local diffeomorphisms. It is clear that the transition maps for charts not containing $\{0,1,\infty\}$ are $C^\infty$ local diffeomorphisms. Therefore, it suffices to consider charts containing these points. Without loss of generality we consider $\phi_1,\phi_2:O\rightarrow S^2$ two conformal charts around $0\in S^2$ for the usual smooth structure on $S^2$. Let $\widetilde{\phi}_1,\widetilde{\phi}_2$ be the corresponding new charts. Associated to $\phi_1,\phi_2$ is a transition map $T = \phi_2^{-1}\circ\phi_1:O\rightarrow O$. Associated to $\widetilde{\phi}_1,\widetilde{\phi}_2$ is a transition map
\[\widetilde{T}:O \rightarrow O\]
\[\widetilde{T} = \widetilde{\phi}_2^{-1}\circ\widetilde{\phi}_1 = \psi^{-1}\circ T \circ \psi.\]
See Figure \ref{comm} for a schematic of these maps.

\begin{figure}
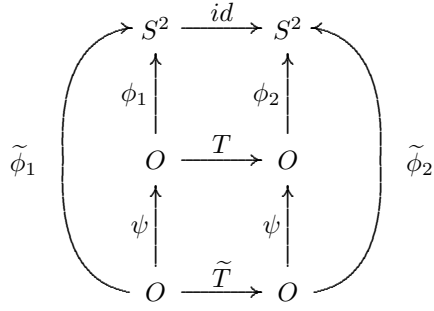

\begin{center}
\mbox{}
\begindc{\commdiag}[5]
\obj(0,20){$S^2$}
\obj(10,20){$S^2$}
\obj(0,10){$O$}
\obj(10,10){$O$}
\obj(0,0){$O$}
\obj(10,0){$O$}
\mor(0,20)(10,20){$id$}
\mor(0,10)(0,20){$\phi_1$}
\mor(10,10)(10,20){$\phi_2$}
\mor(0,10)(10,10){$T$}
\mor(0,0)(0,10){$\psi$}
\mor(10,0)(10,10){$\psi$}
\mor(0,0)(10,0){$\widetilde{T}$}
\cmor((-2,0)(-6,3)(-7,10)(-6,17)(-2,20)) \pright(-10,10){$\widetilde{\phi}_1$}
\cmor((12,0)(16,3)(17,10)(16,17)(12,20)) \pleft(20,10){$\widetilde{\phi}_2$}
\enddc
\end{center}
\caption{Change of coordinates in new smooth structure}\label{comm}
\end{figure}

We must check that $\widetilde{T}$ is $C^1$ at $0$ and $C^\infty$ elsewhere. This suffices because $\widetilde{T}^{-1}$ has the same form as $\widetilde{T}$. The map $T$ is a M\"{o}bius transformation fixing $0$ and is hence of the form
\begin{equation}\label{T}
T(z) = \frac{z}{bz+d},\quad d\neq 0.
\end{equation}
Therefore,
\begin{align*}
\widetilde{T}(z) &= \psi^{-1}\circ T \circ \psi (z)\\
                 &= \psi^{-1}\left(\frac{\psi(z)}{b\psi(z) + d}\right)\\
                 &= \frac{z}{\psi^{-1}(b\psi(z) + d)}
\end{align*}
where the last equality uses property (\ref{multprop}). So we see that $\widetilde{T}$ is $C^1$ at $0$ and $C^\infty$ elsewhere because $\psi$ is $C^1$ at $0$ and $C^\infty$ elsewhere and because $b\psi(z) + d \neq 0$ and $\psi^{-1}$ is $C^\infty$ away from $0$. This proves that we have defined a new $C^1$ SS structure on $S^2$.
\end{proof}

Let $S^2_{new}$ denote $S^2$ with the $C^1$ SS structure given by Lemma \ref{s2c1}. Hence we also get $(S^2_{new})^N$. As described in Section \ref{crcoordinates}, $\overline{\mathcal{M}}_{0,k}$ sits as a submanifold of $(S^2)^N$ by using cross ratios as charts. The next lemma says that this induces a $C^1$ SS structure $\overline{\mathcal{M}}_{0,k}^{new}$. This will be the $C^1$ SS structure of Proposition \ref{dm1}.

\begin{proposition}\label{dmlemma}
Let $\overline{\mathcal{M}}_{0,k}\subset (S^2)^N$ be a submanifold with inclusion given by cross ratio maps. Then $\overline{\mathcal{M}}_{0,k}$ is a $C^1$ SS submanifold of $(S^2_{new})^N$, where $S^2_{new}$ is the $C^1$ SS structure given by Lemma \ref{s2c1}. We will denote $\overline{\mathcal{M}}_{0,k}$ with this new $C^1$ SS structure by $\overline{\mathcal{M}}_{0,k}^{new}$.
\end{proposition}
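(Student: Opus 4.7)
The plan is to give $\overline{\mathcal{M}}_{0,k}$ local coordinate charts that pull back the $C^1$ SS charts on $(S^2_{new})^N$ along the cross-ratio inclusion, and then to verify that the transition functions between these charts are $C^1$ SS. Near a point $\mathbf{w}_0$, start from a Knudsen coordinate system of Section \ref{crcoordinates}: $k-3$ cross ratios, split into gluing parameters $\mathbf{a} = (a_1, \ldots, a_r)$ vanishing at $\mathbf{w}_0$ and $\mathbf{b}$ variables $(b_1, \ldots, b_{k-3-r})$ bounded away from $\{0,1,\infty\}$. Declare the new chart to use coordinates $(\widetilde{\mathbf{a}}, \mathbf{b})$ where $\widetilde{a}_i := \psi^{-1}(a_i)$. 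By construction these are the coordinates read off from $\widetilde{\phi}^{-1} = \psi^{-1} \circ \phi^{-1}$ on the relevant factors of $(S^2_{new})^N$.

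The technical heart is the following factorization: any cross ratio $w$ whose vanishing locus near $\mathbf{w}_0$ is the union of edge strata cut out by $\{a_1,\ldots,a_r\}$ factors as
\[w \;=\; \Bigl(\prod_i a_i^{m_i}\Bigr)\cdot h(\mathbf{a},\mathbf{b}),\qquad m_i\geq 0,\qquad h\text{ smooth with }h(\mathbf{0},\mathbf{b}_0)\neq 0.\]
Example \ref{m05} illustrates this (e.g.\ $v = xz\cdot(xz-x+1)^{-1}$ from (\ref{m05v}), or $y=x(1-z)$ from (\ref{m05y})), and in general the factorization follows inductively from the recursion (\ref{crrecursion}) and symmetries (\ref{crsymmetry}). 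Using the multiplicative property (\ref{multprop}) of $\psi$,
\[\psi^{-1}(w) \;=\; \Bigl(\prod_i \widetilde{a}_i^{m_i}\Bigr)\cdot \psi^{-1}\!\bigl(h(\psi(\widetilde{\mathbf{a}}),\mathbf{b})\bigr).\]
The composition $\psi^{-1}\circ h\circ(\psi\times\mathrm{id})$ is $C^1$ SS because $\psi$ is $C^1$ SS, $h$ is smooth, and $\psi^{-1}$ is smooth on a neighborhood of the nonzero value $h(\mathbf{0},\mathbf{b}_0)$; multiplication by the $C^1$ SS monomial $\prod_i\widetilde{a}_i^{m_i}$ preserves $C^1$ SS. Cross ratios near $1$ or $\infty$ are handled identically after a M\"{o}bius composition, and cross ratios bounded away from $\{0,1,\infty\}$ become smooth functions of $(\psi(\widetilde{\mathbf{a}}),\mathbf{b})$, hence $C^1$ SS.

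Applying this factorization to each component of the inclusion $\overline{\mathcal{M}}_{0,k}^{new}\hookrightarrow(S^2_{new})^N$ shows the inclusion is $C^1$ SS; since it is an embedding in the standard smooth structure and the charts are pulled back by construction, it is a $C^1$ SS embedding, giving $\overline{\mathcal{M}}_{0,k}^{new}$ the structure of a $C^1$ SS submanifold. Applying the factorization to the relations between two Knudsen systems (whose chosen gluing parameters vanish on the same strata near $\mathbf{w}_0$) establishes that chart transitions on $\overline{\mathcal{M}}_{0,k}^{new}$ are $C^1$ SS, completing the verification. The main obstacle is the combinatorial-algebraic claim that every vanishing cross ratio admits the stated multiplicative factorization; in the genus-zero setting this reduces to a finite inductive computation with (\ref{crrecursion}), modelled on Example \ref{m05}.
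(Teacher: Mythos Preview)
Your proposal is correct and uses the same core mechanism as the paper: the multiplicative property (\ref{multprop}) of $\psi$ lets you pull $\psi^{-1}$ through a vanishing cross ratio by first factoring out the monomial in the gluing parameters, leaving $\psi^{-1}$ applied only to a nonvanishing smooth function. The paper arrives at this by explicit computation in the three configurations of $\overline{\mathcal{M}}_{0,5}$ (equations (\ref{utilde}), (\ref{vtilde}), etc.) and then asserts that general transition functions on $\overline{\mathcal{M}}_{0,k}$ decompose as compositions of $\overline{\mathcal{M}}_{0,5}$ transitions. You instead abstract the pattern into a single normal-crossings factorization lemma $w=(\prod_i a_i^{m_i})\,h(\mathbf{a},\mathbf{b})$ with $h$ nonvanishing, and argue directly for arbitrary $k$.

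Both routes leave one combinatorial claim only sketched: the paper's assertion that every $\overline{\mathcal{M}}_{0,k}$ transition factors through $\overline{\mathcal{M}}_{0,5}$ transitions, and your assertion that the factorization ``follows inductively from (\ref{crrecursion}) and (\ref{crsymmetry}).'' Your version is arguably cleaner, since the factorization is just the statement that the boundary of $\overline{\mathcal{M}}_{0,k}$ is a simple normal crossings divisor with the $a_i$ as local defining equations (so any holomorphic function vanishing on a union of boundary components factors as claimed); this is standard and could be cited rather than derived from the recursion. The paper's route has the advantage of being entirely self-contained via explicit formulas.
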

\begin{proof}
As in the proof of Lemma \ref{s2c1}, we must prove that the transition functions for $\overline{\mathcal{M}}_{0,k}$ associated to the charts on $(S^2_{new})^N$ are $C^1$ SS local diffeomorphisms. We will first study the form of the transition functions by creating a diagram similar to Figure \ref{comm}.

Let $w\in\overline{\mathcal{M}}_{0,k}$. For the normal smooth structure on $S^2$, there is a chart $\phi:V\rightarrow\overline{\mathcal{M}}_{0,k}$ around $w$, where $V\subset\mathbb{C}^{k-3}$ and $\phi$ is given via cross ratios as described earlier. The set $V$ splits as $V=V_{\mathbf{a}}\times V_{\mathbf{b}}\subset \mathbb{C}^{K}\times \mathbb{C}^{k-3-K}$ where $V_{\mathbf{a}}$ are the gluing variables, or $\mathbf{a}$ variables, $V_{\mathbf{b}}$ are the $\mathbf{b}$ variables, and $K$ is the number of nodes in $w$. The $\mathbf{b}$ are away from $0,1,\infty$ and without loss of generality we will assume that all $\mathbf{a}$ variables are around $0$ (rather than possibly $1$ or $\infty$). The charts for $\overline{\mathcal{M}}_{0,k}$ with the new smooth structure $S^2_{new}$ are
\begin{align*}
\widetilde{\phi}:V &\rightarrow \overline{\mathcal{M}}_{0,k}\\
\widetilde{\phi} & = \phi\circ\Psi,
\end{align*}
where $\Psi$ is the reparametrization map
\begin{align*}
\Psi:V=V_{\mathbf{a}}\times V_{\mathbf{b}} &\rightarrow V=V_{\mathbf{a}}\times V_{\mathbf{b}}\\
(\widetilde{\mathbf{a}},\widetilde{\mathbf{b}}) &\mapsto \Psi(\widetilde{\mathbf{a}},\widetilde{\mathbf{b}}):= (\psi(\widetilde{\mathbf{a}}),\widetilde{\mathbf{b}})
\end{align*}
and $\psi$ is the reparametrization map from $(\ref{psireparam})$. We write $\psi(\widetilde{\mathbf{a}})$ to mean
\begin{equation}\label{tildenotation}
\psi(\widetilde{\mathbf{a}}) = \psi(\widetilde{a}_1,\ldots,\widetilde{a}_K) := (\psi(\widetilde{a}_1),\ldots,\psi(\widetilde{a}_K)).
\end{equation}

Thus, in these new local coordinates, the new variables $\widetilde{\mathbf{a}}$ are obtained from the old variables $\mathbf{a}$ by reparametrization: \[\psi(\widetilde{\mathbf{a}}) = \mathbf{a}.\]
The new variables $\widetilde{\mathbf{b}}$ are the same as the old variables $\mathbf{b}$. Similar to the map $\psi$ from $(\ref{psireparam})$, $\Psi$ is $C^1$ SS, that is it is $C^1$ everywhere and $C^\infty$ on the strata determined by the $\mathbf{a}$ variables.

To check that $\overline{\mathcal{M}}_{0,k}$ is a $C^1$ SS submanifold we need to show that the transition functions associated to the new charts are $C^1$ SS local diffeomorphisms. Let $\phi_1,\phi_2:V\rightarrow \overline{\mathcal{M}}_{0,k}$ be two charts for the usual smooth structure on $\overline{\mathcal{M}}_{0,k}$ and let $\widetilde{\phi}_1,\widetilde{\phi}_2$ be the corresponding new charts. Associated to $\phi_1,\phi_2$ is a transition map

\[T = \phi_2^{-1}\circ\phi_1:V\rightarrow V\]
\[T(\mathbf{a},\mathbf{b}) = \big(T_{A}(\mathbf{a},\mathbf{b}),T_{B}(\mathbf{a},\mathbf{b})\big).\]
Associated to $\widetilde{\phi}_1,\widetilde{\phi}_2$ is a transition map

\[\widetilde{T}:V \rightarrow V\]
\[\widetilde{T} :=  \widetilde{\phi}_2^{-1}\circ\widetilde{\phi}_1 = \Psi^{-1}\circ T \circ \Psi.\]
See Figure \ref{comm2} for a diagram of $T,\widetilde{T}$.
\begin{figure}
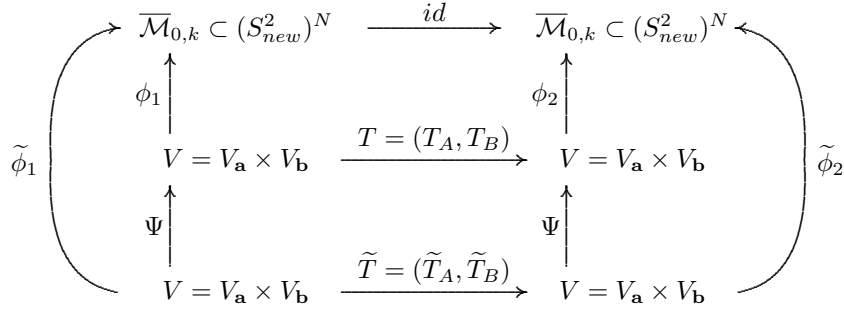

\begin{center}
\mbox{}
\begindc{\commdiag}[5]
\obj(0,20)[A]{$\overline{\mathcal{M}}_{0,k}\subset(S^2_{new})^N$}
\obj(30,20)[B]{$\overline{\mathcal{M}}_{0,k}\subset(S^2_{new})^N$}
\obj(0,10)[C]{$V=V_\mathbf{a}\times V_{\mathbf{b}}$}
\obj(30,10)[D]{$V=V_\mathbf{a}\times V_{\mathbf{b}}$}
\obj(0,0)[E]{$V=V_\mathbf{a}\times V_{\mathbf{b}}$}
\obj(30,0)[F]{$V=V_\mathbf{a}\times V_{\mathbf{b}}$}
\mor(8,20)(22,20){$id$}
\mor(6,10)(24,10){$T = (T_A,T_B)$}
\mor(6,0)(24,0){$\widetilde{T} = (\widetilde{T}_A,\widetilde{T}_B)$}
\mor(-5,10)(-5,20){$\phi_1$}
\mor(25,10)(25,20){$\phi_2$}
\mor(-5,0)(-5,10){$\Psi$}
\mor(25,0)(25,10){$\Psi$}
\cmor((-9,0)(-13,3)(-14,10)(-13,17)(-9,20)) \pright(-16,10){$\widetilde{\phi}_1$}
\cmor((38,0)(42,3)(43,10)(42,17)(38,20)) \pleft(45,10){$\widetilde{\phi}_2$}
\enddc
\end{center}
\caption{Change of coordinates for $\overline{\mathcal{M}}_{0,k}$}\label{comm2}
\end{figure}

We must show that $\widetilde{T}$ is $C^1$ SS, where $V \subset\mathbb{C}^{n-3}$ has stratification given by the $\mathbf{a}$ variables. The fact that $\widetilde{T}^{-1}$ is also $C^1$ SS follows because $\widetilde{T}^{-1}$ has the same form as $\widetilde{T}$. We first note that the transition map $T$ is $C^\infty$, so $T_A,T_B$ are $C^\infty$ functions of $\mathbf{a},\mathbf{b}$. We see that
\begin{align*}
\widetilde{T}(\widetilde{\mathbf{a}},\widetilde{\mathbf{b}}) &= \Psi^{-1}\circ T \circ \Psi(\widetilde{\mathbf{a}},\widetilde{\mathbf{b}})\\
                                                             &= \Psi^{-1}\circ T (\psi(\widetilde{\mathbf{a}}),\widetilde{\mathbf{b}})\\
                                                             &= \Psi^{-1}\Big(T_A\big(\psi(\widetilde{\mathbf{a}}),\widetilde{\mathbf{b}}\big),T_B\big(\psi(\widetilde{\mathbf{a}}),\widetilde{\mathbf{b}}\big)\Big)\\
                                                             &=\Big(\psi\Big(T_A\big(\psi(\widetilde{\mathbf{a}}),\widetilde{\mathbf{b}}\big)\Big),T_B\big(\psi(\widetilde{\mathbf{a}}),\widetilde{\mathbf{b}}\big)\Big).
\end{align*}
Note that $\widetilde{T}_B$, the second component of $\widetilde{T}$, is equal to $T_B(\psi(\widetilde{\mathbf{a}}),\widetilde{\mathbf{b}})$. The map $\widetilde{T}_B$ is $C^1$ SS because $T_B$ is smooth and $\psi$ is $C^1$ SS.

Since $\widetilde{T}_B$ is $C^1$ SS, it remains to argue that the first component $\widetilde{T}_A = \psi\big(T_A(\psi(\widetilde{\mathbf{a}}),\widetilde{\mathbf{b}})\big)$ is $C^1$ SS. In other words, that the new reparametrized gluing parameters are $C^1$ SS functions of $\widetilde{\mathbf{a}},\widetilde{\mathbf{b}}$. To do this we will now move to the specific case of $\overline{\mathcal{M}}_{0,5}$.

Example \ref{m05} describes coordinates on $\overline{\mathcal{M}}_{0,5}$. We will examine this case; this will suffice because transition functions for $\overline{\mathcal{M}}_{0,k}$ can be represented as a composition of transition functions on $\overline{\mathcal{M}}_{0,5}$. We will examine Configurations 1, 2, and 3 from Example \ref{m05}. For simplicity we will consider the case when $x,z$ give coordinates and look at coordinate changes from this. However, the same argument can be made for any choice of coordinates. In what follows we will use the same notation as (\ref{tildenotation}). This means that if $w$ is a coordinate, then $\widetilde{w}$ is the reparametrized coordinate. So if $w$ is a $\mathbf{b}$ variable, $\widetilde{w}=w$ and if $w$ is a $\mathbf{a}$ variable, $\psi(\widetilde{w}) = w$.\\

\noindent\underline{Configuration 1}: In the case of Configuration 1, $x,z$ give coordinates; $z$ is an $\mathbf{a}$ variable and $x$ is a $\mathbf{b}$ variable. Therefore,
\[\widetilde{x} = x,\qquad \psi(\widetilde{z}) = z.\]
For the other coordinates $\widetilde{y},\widetilde{u},\widetilde{v}$,
\[\widetilde{y} = y,\qquad \psi(\widetilde{u}) = u,\qquad \psi(\widetilde{v}) = v.\]
We want to show that $\widetilde{y},\widetilde{u},\widetilde{v}$ are $C^1$ SS functions of $\widetilde{x},\widetilde{z}$.

We have already argued above that $\widetilde{T}_B$ is $C^1$ SS and hence the $\mathbf{b}$ variable $\widetilde{y}$ is a $C^1$ SS function of $\widetilde{x}$ and $\widetilde{z}$.  This can also be seen explicitly using (\ref{m05y}):
\[\widetilde{y} = y = x - xz = \widetilde{x} - \widetilde{x}\psi(\widetilde{z}).\]
The map $\psi$ is $C^1$ SS so $\widetilde{y}$ is a $C^1$ SS function of $\widetilde{x},\widetilde{z}$.

For the $\mathbf{a}$ variable $\widetilde{u}$, we use (\ref{m05u}) to see
\begin{align}
\widetilde{u} &= \psi^{-1}(u)\nonumber\\
              &= \psi^{-1}\left(\frac{z}{zx-x+1}\right)\nonumber\\
              &= \psi^{-1}\left(\frac{\psi(\widetilde{z})}{\psi(\widetilde{z})\widetilde{x}-\widetilde{x}+1}\right)\nonumber\\
              &=\frac{z}{\psi^{-1}(\psi(\widetilde{z})\widetilde{x}-\widetilde{x}+1)},\label{utilde}
\end{align}
where the last equality uses (\ref{multprop}). The variable $\widetilde{z}$ is close to $0$ and $\widetilde{x}$ is away from $1$, so we conclude that $\psi(\widetilde{z})\widetilde{x}-\widetilde{x}+1$ is away from $0$ and hence the denominator of $(\ref{utilde})$ is $C^\infty$ because $\psi^{-1}$ is $C^\infty$ away from $0$.

Similarly, using (\ref{m05v}):
\begin{align}
\widetilde{v} &= \psi^{-1}(v)\nonumber\\
              &= \psi^{-1}\left(\frac{xz}{zx-x+1}\right)\nonumber\\
              &= \psi^{-1}\left(\frac{\widetilde{x}\psi(\widetilde{z})}{\psi(\widetilde{z})\widetilde{x}-\widetilde{x}+1}\right)\nonumber\\
              &=\frac{\psi^{-1}(\widetilde{x})z}{\psi^{-1}\big(\psi(\widetilde{z})\widetilde{x}-\widetilde{x}+1\big)}\label{vtilde}.
\end{align}
The denominator of (\ref{vtilde}) is $C^\infty$ for the same reason as $(\ref{utilde})$. The numerator is easily seen to be $C^\infty$ by noting that $\widetilde{x}$ is away from $0$. Therefore, we see $\widetilde{u},\widetilde{v}$ are $C^1$ SS functions of $\widetilde{x},\widetilde{z}$. In other words, in this case the transition function $\widetilde{T}_A$ is $C^1$ SS. This shows that the transition map $\widetilde{T}$ is $C^1$ SS.\\

\noindent\underline{Configuration 2}: In the case of Configuration 2, $x,z$ give coordinates; both $x,z$ are $\mathbf{a}$ variables so
\[\psi(\widetilde{x}) = x,\qquad \psi(\widetilde{z}) = z.\]
The variables $y,u$ are also $\mathbf{a}$ variables so
\[\psi(\widetilde{y}) = y,\qquad \psi(\widetilde{u}) = u.\]
Just as with Configuration 1, we use $(\ref{m05y})$ and $(\ref{m05u})$ to argue $\widetilde{y},\widetilde{u}$ are $C^1$ SS functions of $\widetilde{x},\widetilde{z}$. First,
\begin{align*}
\widetilde{y} &= \psi^{-1}(y)\\
              &= \psi^{-1}\big(x(1 - z)\big)\\
              &= \psi^{-1}\Big(\psi(\widetilde{x})\big(1 - \psi(\widetilde{z})\big)\Big)\\
              &= \widetilde{x}\Big(\psi^{-1}\big(1 - \psi(\widetilde{z})\big)\Big).
 \end{align*}
We see that $\widetilde{y}$ is a $C^1$ SS function of $\widetilde{x},\widetilde{z}$ because $\widetilde{z}$ is close to $0$ and hence $1-\psi(\widetilde{z})$ is away from zero.

Finally,
\begin{align*}
\widetilde{u} &= \psi^{-1}(u)\\
              &= \psi^{-1}\left(\frac{z}{zx-x+1}\right)\\
              &= \psi^{-1}\left(\frac{\psi(\widetilde{z})}{\psi(\widetilde{z})\psi(\widetilde{x})-\psi(\widetilde{x})+1}\right)\\
              &=\frac{z}{\psi^{-1}\big(\psi(\widetilde{z})\psi(\widetilde{x})-\psi(\widetilde{x})+1\big)}.
\end{align*}
Both $\widetilde{x},\widetilde{z}$ are close to $0$, so $\psi(\widetilde{z})\psi(\widetilde{x})-\psi(\widetilde{x})+1$ is away from $0$ and hence $\widetilde{u}$ is a $C^1$ SS function of $\widetilde{x},\widetilde{z}$. Therefore, we see $\widetilde{y},\widetilde{u}$ are $C^1$ SS functions of $\widetilde{x},\widetilde{z}$. This shows that the transition map $\widetilde{T}$ is $C^1$ SS.\\

\noindent\underline{Configuration 3}: All variables are $\mathbf{b}$ variables and we have previously argued that $\widetilde{T}_B$ is $C^1$ SS.\\

Completing all three configurations, we see that $\widetilde{T}$ is $C^1$ SS and hence $\overline{\mathcal{M}}_{0,k}$ forms a $C^1$ SS submanifold of $(S^2_{new})^N$.
\end{proof}
\begin{remark}\label{smoothforget}
It is trivial to note that for the new smooth structure $\overline{\mathcal{M}}_{k}^{new}$ of Proposition \ref{dmlemma}, the forgetful map
\[\overline{\mathcal{M}}_{k}^{new}\rightarrow\overline{\mathcal{M}}_{k-1}^{new}\]
is $C^1$ SS. Due to the particularly simple form of the reparametrization used to define the new smooth structure, this forgetful map is merely a $C^1$ SS projection. Although simple, this fact is crucial to the proof of the Gromov-Witten axioms in \cite{axiomme}. Other methods of changing the smooth structure on Deligne-Mumford space result in manifolds where the forgetful map is not smooth. For example, \cite[\S 2.3]{hwzdm} uses an exponential gluing profile to define the smooth structure (see Remark \ref{gluingprofile} for more about gluing profiles) and the forgetful map is not smooth in this case. See \cite{wg} for further discussion of the regularity of forgetful maps. The lack of differentiability of forgetful maps has obstructed the proof of the Gromov-Witten axioms using other virtual fundamental cycle methods.
\end{remark}

\section{Gromov-Witten charts}\label{gwcharts}
Let $(M^{2n},\omega)$ be a compact symplectic manifold and let $J$ be a tame almost complex structure. Let $X = \overline{\mathcal{M}}_{0,k}(A,J)$ be the compact space of nodal $J$-holomorphic genus zero stable maps in homology class $A$ with $k$ marked points modulo reparametrization. Let $d=2n+2c_1(A)+2k-6$. We will think of elements of $X$ as equivalence classes $[\Sigma,\mathbf{z},f]$ where $\Sigma$ is a genus zero nodal Riemann surface, $\mathbf{z}$ are $k$ disjoint marked points, and $f:\Sigma\rightarrow M$ is $J$-holomorphic map in homology class $A$.

Sections \ref{coordfree} and \ref{chartsincoords} give a brief description of the basic Gromov-Witten chart near a point $[\Sigma_0,\mathbf{z}_0,f_0]\in X$. \cite[$\S$ 4]{notes} gives a more detailed description of Gromov-Witten charts and verifies the necessary properties. The main result of this section is a proof of Theorem \ref{admitthm} which states that $X$ has a $d$-dimensional $C^1$ SS atlas. A precise description of this atlas is given by Theorem \ref{c1atlasthm}.

\subsection{Coordinate free definition of Gromov-Witten charts}\label{coordfree}
We will first describe the charts in a coordinate free manner. While it is relatively easy to describe the charts in this way, it is difficult to prove anything, so a description in coordinates will be given in Section \ref{chartsincoords}.\\

\noindent \textbf{Isotropy group}: The isotropy group $\Gamma$ is the isotropy of the element $[\Sigma_0,\mathbf{z}_0,f_0]$.\\

\noindent \textbf{Slicing manifold}: Next, we make an auxiliary choice of a \textbf{slicing manifold} $Q$ that is transverse to im$f_0$, disjoint from $f_0(\mathbf{z}_0)$, orientable, and so that the $k$ marked points $\mathbf{z}_0$ together with the $L$ marked points $\mathbf{w}_0:=f_0^{-1}(Q)$ stabilize the domain of $f_0$.\\

\noindent\textbf{Obstruction space}: Let
\begin{equation}\label{delta}
\Delta\subset\overline{\mathcal{M}}_{0,k+L}
\end{equation}
be a small neighborhood of $[\Sigma_0,\mathbf{w}_0,\mathbf{z}_0]\in\overline{\mathcal{M}}_{0,k+L}$ and $\mathcal{C}|_{\Delta}$ denote the universal curve over $\Delta$.

To define the obstruction space, we first choose a map $\lambda:E_0\rightarrow C^\infty(Hom_J^{0,1}(\mathcal{C}|_{\Delta}\times M))$. The space $E_0$ and the map $\lambda$ are chosen to ensure that the domains are cut out transversally near $f_0$ (see (\ref{domainhateqn}) and the discussion afterwards). Then the obstruction space $E$ is defined to be $E:=\Pi_{\gamma\in\Gamma}E_0$, the product of $|\Gamma|$ copies of $E_0$ with elements $\vec{e}:=(e^\gamma)_{\gamma\in\Gamma}$. The isotropy group $\Gamma$ acts on $E$ by permutation
\begin{equation}\label{actonobs}
(\alpha\cdot\vec{e})^{\gamma} = e^{\alpha\gamma}.
\end{equation}
The map $\lambda$ is then extended to be a $\Gamma$-equivariant map on $E$.\\

\noindent \textbf{Defining $\widehat{U}$}: Before defining the domain $U$, we define $\widehat{U}$. For $(0,[\Sigma_0,\mathbf{w}_0,\mathbf{z}_0,f_0])\in X$, $\widehat{U}$ is defined to be a neighborhood of $(0,[\Sigma_0,\mathbf{w}_0,\mathbf{z}_0,f_0])$ in the following space
\begin{equation}\label{domainhateqn}
\widehat{U}\subset\left\{(\vec{e},[\Sigma,\mathbf{w},\mathbf{z},f])~|~\vec{e}\in E,~ \overline{\partial}_Jf = \lambda(\vec{e})|_{\textnormal{graph}f}\right\}.
\end{equation}
Note that $\bar{\partial}_J$ gives a section of the bundle $C^\infty\big(Hom_J^{0,1}(\mathcal{C}|_{\Delta}\times M)\big)$. Standard Fredholm theory implies that while its linearization $d_f(\bar{\partial}_J)$ may not be surjective, it has a finite dimensional cokernel so that one can choose $\lambda(E_0)$ to surject onto it. The solution set $\widehat{U}$ in (\ref{domainhateqn}) is then a smooth manifold. See \cite{notes} for more details on how $E$ and $\lambda$ are chosen.\\

\noindent \textbf{Isotropy group action}: The group $\Gamma$ is the stabilizer of $[\Sigma_0,\mathbf{z}_0,f_0]$. Each $\gamma\in\Gamma$ is uniquely determined by how it permutes the extra marked points $\mathbf{w}_0$. That is, we consider $\Gamma$ to be a subgroup of the symmetric group $S_L$ and $\Gamma$ acts by
\[\mathbf{w}_0\mapsto \gamma\cdot\mathbf{w}_0:=(w^{\gamma(\ell)})_{1\leq\ell\leq L}.\]
There is also an associated action on the nodes $\mathbf{n}_0$.

We can extend this action to an action on $\Delta$ by
\[\delta=[\mathbf{n},\mathbf{w},\mathbf{z}]\mapsto\gamma^{*}(\delta)=[\gamma\cdot\mathbf{n},\gamma\cdot\mathbf{w},\mathbf{z}].\]
Finally, $\Gamma$ (partially) acts on $\widehat{U}$ by
\begin{equation}\label{coordfreeiso}
(\vec{e},[\mathbf{n},\mathbf{w},\mathbf{z},f])\mapsto(\gamma\cdot\vec{e},[\gamma\cdot\mathbf{n},\gamma\cdot\mathbf{w},\mathbf{z},f])
\end{equation}
where the (partially defined) action $\gamma\cdot\vec{e}$ is given by (\ref{actonobs}). It is proved in \cite{notes} that (\ref{coordfreeiso}) preserves solutions to (\ref{domainhateqn}).\\

\noindent \textbf{Domain}: The domain $U$ is obtained from $\widehat{U}$ by making $\widehat{U}$ $\Gamma$-invariant and then cutting down $\widehat{U}$ via the slicing manifold $Q$. By this we mean we may assume $\widehat{U}$ is $\Gamma$-invariant by taking an intersection of the finite orbit of $\widehat{U}$ under $\Gamma$. Then we cut down via the slicing manifold, which is to say that we make $U$ a $\Gamma$-invariant open neighborhood of $(0,[\Sigma_0,\mathbf{w}_0,\mathbf{z}_0,f_0])$ in the following space:
\begin{equation}\label{slice}
U\subset \left\{(\vec{e},[\Sigma,\mathbf{w},\mathbf{z},f])\in\widehat{U}~|~f(\mathbf{w})\subset Q\right\}.
\end{equation}
The subset $\widehat{U}$ will be of use because the analysis requires working in $\widehat{U}$. (See \cite[Remark 4.1.2]{notes} for more details on the necessity of working with $\widehat{U}$.)

The \textbf{section} and \textbf{footprint} maps are given by
\[s\left(\vec{e},[\Sigma,\mathbf{w},\mathbf{z},f]\right) = \vec{e},\qquad \psi\left(\vec{0},[\Sigma,\mathbf{w},\mathbf{z},f]\right) = [\Sigma,\mathbf{z},f].\]

The construction of the sum charts is very similar. For $I\subset\mathcal{I}_{\mathcal{K}}$, denote $\underline{\vec{e}}:=(\vec{e_i})_{i\in I}, \underline{\mathbf{w}}:=(\mathbf{w_i})_{i\in I}$. Then $U_I$ is chosen to be a $\Gamma_I$-invariant open set
\[U_I\subset \left\{(\underline{\vec{e}},[\Sigma,\underline{\mathbf{w}},\mathbf{z},f])~|~\underline{\vec{e}}\in E_I, f(\mathbf{w}_i)\subset Q_i, \overline{\partial}_Jf = \lambda(\underline{\vec{e}})|_{\textnormal{graph}f}\right\}\]
such that it has footprint $F_I=\bigcap_{i\in I}F_i$. As before, $U_I$ is obtained from a $\widehat{U}_I$ by cutting down via the slicing manifolds $\{Q_i\}_{i\in I}$.

In this coordinate free language, the coordinate changes $\mathbf{K}_I\rightarrow\mathbf{K}_J$ are given by choosing an appropriate domain $\widetilde{U}_{IJ}\subset U_J$ and then simply forgetting the components $(\mathbf{w_i})_{i\in(J\setminus I)}$ and the $(e_i)_{i\in (J\setminus I)}$ (which are 0 because $\widetilde{U}_{IJ}\subset s_J^{-1}(E_I)$).

\subsection{Charts in coordinates}\label{chartsincoords}
We will now describe a basic chart in coordinates and express the domain $U$ as the zero set of a transverse section. This is done by describing the domain $\Sigma$ in terms of gluing parameters. More precisely, the elements of the domain $U$ of a basic chart near $[\Sigma_0,\mathbf{z}_0,f_0]\in X$ are of the form $(\vec{e},\mathbf{a},\mathbf{b},\vec{\omega},\vec{\zeta},f)$ where:
\begin{enumerate}[(i)]
\item $\vec{e}\in E$.
\item Let $\mathbf{P}$ be a \textbf{normalization} of $[\Sigma_0,\mathbf{w}_0,\mathbf{z}_0]$, that is a choice of three special points on each component of $\Sigma_0$. A choice of $\mathbf{P}$ determines a parametrization of $\Sigma_0$, that is an identification of each component of $\Sigma_0$ with the standard $S^2$; denote this parametrization by $\Sigma_{\mathbf{P},0}$. Let $(\mathbf{w}_0)_{\mathbf{P}},(\mathbf{z}_0)_{\mathbf{P}},(\mathbf{n}_0)_{\mathbf{P}}$ denote the points of $\mathbf{w}_0,\mathbf{z}_0,$ and nodal points $\mathbf{n}_0$ that are normalized. Then $(\Sigma_{\mathbf{P},0},(\mathbf{w}_0)_{\mathbf{P}},(\mathbf{z}_0)_{\mathbf{P}})$ determines an element $\delta_{\mathbf{P},0}\in\overline{\mathcal{M}}_{0,p_z+p_w}$, where $p_z$ and $p_w$ are the number of points in $(\mathbf{z}_0)_{\mathbf{P}},(\mathbf{w}_0)_{\mathbf{P}}$.

    The parameters $\mathbf{a},\mathbf{b}$ are small and describe a neighborhood of $(\Sigma_{\mathbf{P},0}, (\mathbf{w}_0)_{\mathbf{P}}, (\mathbf{z}_0)_{\mathbf{P}})$ in the Deligne-Mumford space $\overline{\mathcal{M}}_{0,p_z+p_w}$. We will denote such a neighborhood by $\Delta_{\mathbf{P}}$. The parameters $\mathbf{a}$ are gluing parameters and the parameters $\mathbf{b}$ describe the position of the free nodes. Therefore, if $\Sigma_{\mathbf{P},0}$ has $K$ nodes and $\mathbf{P}$ fixes $p_n$ nodal points, then $\mathbf{a}=(a_1,\ldots,a_K)\in B^{2K}$ and $\mathbf{b}=(b_1,\ldots,b_{2K-p_n})\in B^{2(2K-p_n)}$, where $B^\ell$ denotes a small ball around $0$ in $\mathbb{R}^\ell$. For a description of these parameters $\mathbf{a},\mathbf{b}$ via cross ratios of the point $\mathbf{z}_{\mathbf{P}}$,$\mathbf{w}_{\mathbf{P}}$, see Section \ref{c1sscharts}.

    We will denote elements in $\overline{\mathcal{M}}_{0,p_z+p_w}$ nearby $\delta_{\mathbf{P},0}$ by $\delta_{\mathbf{P}}=[\mathbf{n},\mathbf{z}_{\mathbf{P}},\mathbf{w}_{\mathbf{P}}]$. An element $\delta_{\mathbf{P}}$ will have normalized domain $\Sigma_{\mathbf{P},\delta_{\mathbf{P}}}$ and will also be denoted $\Sigma_{\mathbf{P},\mathbf{a},\mathbf{b}}$ so
     \[\Sigma_{\mathbf{P},\delta_{\mathbf{P}}} = \Sigma_{\mathbf{P},\mathbf{a},\mathbf{b}}.\]
     Thus $\Sigma_{\mathbf{P},\delta_{\mathbf{P}}}$ is obtained from $\Sigma_{\mathbf{P},0}$ by gluing nodes and moving nonfixed nodal points in a manner that is precisely described by $\mathbf{a}$ and $\mathbf{b}$.
\item The parameters $\vec{\omega},\vec{\zeta}$ describe the positions of the nonfixed $\mathbf{w},\mathbf{z}$ respectively. Together $\mathbf{a},\mathbf{b},\vec{\omega},\vec{\zeta}$ determine a unique fibre in $\mathcal{C}|_{\Delta}$, where $\Delta$ is as in (\ref{delta}). The construction is so the map $\mathcal{C}|_{\Delta}\rightarrow\mathcal{C}|_{\Delta_{\mathbf{P}}}$ induced by the forgetful map does not collapse any components.

    We will denote stable curves with the full set of marked points by $\delta:=[\mathbf{n},\mathbf{z},\mathbf{w}]=[\Sigma_\delta,\mathbf{z},\mathbf{w}]$. If we parameterize using $\mathbf{P}$, then $\delta$ has normalized domain $\Sigma_{\mathbf{P},\delta_{\mathbf{P}}}$. Note that the stable curve $\delta = [\mathbf{n},\mathbf{w},\mathbf{z}]$ is actually determined by the tuple $[\mathbf{w},\mathbf{z}]$, but we include the nodal points in the notation for clarity because they appear in the normalization.
\item The map $f:\Sigma_{\mathbf{P},\mathbf{a},\mathbf{b}}\rightarrow M$ represents the homology class $A\in H_2(M)$ and is a solution to the equation
    \begin{equation}\label{solution}
    \overline{\partial}_J(f) = \lambda(\vec{e})|_{\textnormal{graph}f} := \sum_{\gamma\in\Gamma}\gamma^*(\lambda(e^\gamma))|_{\textnormal{graph}f},
    \end{equation}
    where $\gamma\in\Gamma$ acts by
    \begin{equation}\label{gammaactlambda}
    \gamma^{*}(\lambda(e^\gamma))|_{(z,f(z))}:=\lambda(e^\gamma)(\phi_{\gamma,\delta}^{-1}(z),f(z))\circ d_z\phi_{\gamma,\delta}^{-1}
    \end{equation}
    and $\phi_{\gamma,\delta}$ is described in (\ref{4121}) below.
\end{enumerate}

The solutions to this equation can be described as the zero set of the section
\begin{align}
F:E\times B^{2(L+k-p_w-p_z)}\times W^{1,p}(\mathcal{C}|_\Delta,M)&\rightarrow L^p\big(Hom_J^{0,1}(\Sigma_{\mathbf{P},\mathbf{a},\mathbf{b}}\times M)\big)\\
F(\vec{e},\mathbf{a},\mathbf{b}, \vec{\omega},\vec{\zeta},f)&=\overline{\partial}_Jf - \lambda(\vec{e})|_{\textnormal{graph}f}\nonumber.
\end{align}
Here $\mathcal{C}|_{\Delta}$ denotes the universal curve over a neighborhood $\Delta$ of $[\Sigma_{\mathbf{P},0},\mathbf{z}_0,\mathbf{w}_0]\in\overline{\mathcal{M}}_{0,k+L}$. It is described in \cite{notes} that this operator has a linearization $dF$ and an appropriate choice of obstruction space ensures that $dF$ is surjective at the center point $(0,0,0,0,0,f_0)$\footnote{There are additional conditions imposed on E. See \cite[$\S 4$]{notes}, in particular condition (*).}. We then define $\widehat{U}$ to be the space of solutions near the center point $(0,0,0,0,0,f_0)$. The domain $U$ of a basic chart about $[\Sigma_0,\mathbf{z}_0,f_0]$ is obtained from $\widehat{U}$ by further cutting down by the slicing manifold $Q$ (as in (\ref{slice})) and is $\Gamma$-invariant. Exactly what type of (smooth) structure $\widehat{U}$ carries depends on the gluing theorem used. In Section \ref{c1gluing}, we prove a ``$C^1$ gluing theorem'', Theorem \ref{mainthm}, which we will show give Gromov-Witten charts a \textit{$C^1$ stratified smooth structure}. This is a structure defined in generality in Section \ref{c1ss}. Section \ref{c1sscharts} will then apply this theory to Gromov-Witten charts.

We can also give explicit formulas for the action of $\Gamma$ and coordinate changes in these coordinates.\\

\noindent \textbf{Isotropy group action:} We will now describe the isotropy group action (\ref{coordfreeiso}) in coordinates. The map $f$ is defined on the normalized domain $\Sigma_{\mathbf{P},\delta}$. Note that $\Sigma_{\mathbf{P},\delta}\neq\Sigma_{\mathbf{P},\gamma^{*}(\delta)}$ because in $\Sigma_{\mathbf{P},\gamma^{*}(\delta)}$ the points whose \textit{new} labels are in $\mathbf{P}$ are normalized. Therefore, the normalized action can be written as
\[(\Sigma_{\mathbf{P},\delta},\mathbf{w},\mathbf{z},f)\mapsto(\Sigma_{\mathbf{P},\gamma^{*}(\delta)},\phi_{\gamma,\delta}^{-1}(\gamma\cdot\mathbf{w}),\phi_{\gamma,\delta}^{-1}(\mathbf{z}),f\circ\phi_{\gamma,\delta})\]
where
\begin{equation}\label{4121}
\phi_{\gamma,\delta}:\Sigma_{\mathbf{P},\gamma^{*}(\delta)}\rightarrow\Sigma_{\mathbf{P},\delta}
\end{equation}
is the unique biholomorphic map that takes the special points $\gamma\cdot\mathbf{n},\gamma\cdot\mathbf{w},\mathbf{z}$ in $\Sigma_{\mathbf{P},\gamma^{*}(\delta)}$ with normalization $\mathbf{P}$ to the corresponding points in $\Sigma_\delta$. Note that $\phi_{\gamma,\delta}$ does not depend on the map $f$ but rather only on the parameters $\mathbf{a},\mathbf{b}$ and $\vec{\omega},\vec{\zeta}$.

These maps $\phi_{\gamma,\delta}$ pull back to partially defined maps $\phi_{\mathbf{P},\gamma,\delta}$ on a fixed surface as follows. The gluing parameters $\mathbf{a},\mathbf{b}$ determine a fibrewise embedding
\begin{align}\label{iota}
\iota_\mathbf{P} &: (\Sigma_{\mathbf{P},0}\setminus \mathcal{N}(nodes)) \times B^{2K}\times B^{4K-2p_n}  \rightarrow \mathcal{C}|_{\Delta_\mathbf{P}}\\
\iota_{\mathbf{P},\mathbf{a},\mathbf{b}} &: (\Sigma_{\mathbf{P},0}\setminus \mathcal{N}(nodes))  \times \{\mathbf{a},\mathbf{b}\} \mapsto \Sigma_{\mathbf{P},\mathbf{a},\mathbf{b}}\setminus\mathcal{N}(nodes)\nonumber
\end{align}
where $\mathcal{N}(nodes)$ denotes some neighborhood of the nodes, varying upon circumstance. So $\iota_{\mathbf{P}}$ identifies $\Sigma_{\mathbf{P},\mathbf{a},\mathbf{b}}\setminus\mathcal{N}(nodes)$ with a \textit{fixed} open subset of $\Sigma_{\mathbf{P},0}$. Then we can define
\[\phi_{\mathbf{P},\gamma,\delta}:(\Sigma_{\mathbf{P},0}\setminus\mathcal{N}(nodes))\rightarrow(\Sigma_{\mathbf{P},0}\setminus\mathcal{N}(nodes))\]
to be the composition
\begin{equation}\label{phip}
\phi_{\mathbf{P},\gamma,\delta}=\iota^{-1}_{\mathbf{P},\mathbf{a},\mathbf{b}}\circ\phi_{\gamma,\delta}\circ\iota_{\mathbf{P},\mathbf{a}',\mathbf{b}'}
\end{equation}
\[\Sigma_{\mathbf{P},0}\setminus\mathcal{N}(nodes)\xrightarrow{\iota_{\mathbf{P},\mathbf{a}',\mathbf{b}'}}\Sigma_{\mathbf{P},\mathbf{a}',\mathbf{b}'}=\Sigma_{\mathbf{P},\gamma^{*},\delta}\xrightarrow{\phi_{\gamma,\delta}}\Sigma_{\mathbf{P},\mathbf{a},\mathbf{b}}=\Sigma_{\mathbf{P},\gamma}\xrightarrow{\iota^{-1}_{\mathbf{P},\mathbf{a},\mathbf{b}}}\Sigma_{\mathbf{P},0}\setminus\mathcal{N}(nodes).\]
We then define the action of $\alpha\in\Gamma$ on $\widehat{U}$ by
\begin{equation}\label{isotopyfor}
\alpha^{*}(\vec{e},\mathbf{a},\mathbf{b},\vec{\omega},\vec{\zeta},f) = (\alpha\cdot\vec{e},\mathbf{a}',\mathbf{b}',\phi^{-1}_{\mathbf{P},\alpha,\delta}(\alpha\cdot\vec{\omega}),\phi^{-1}_{\mathbf{P},\alpha,\delta}(\vec{\zeta}),f\circ\phi_{\alpha,\delta})
\end{equation}
where
\begin{align*}
\delta=[\mathbf{n},\mathbf{w},\mathbf{z}]\in\overline{\mathcal{M}}_{0,k},\\
\Sigma_{\mathbf{P},\delta}:=\Sigma_{\mathbf{P},\mathbf{a},\mathbf{b}},\\
\Sigma_{\mathbf{P},\alpha^{*}(\delta)}=\Sigma_{\mathbf{P},\mathbf{a}',\mathbf{b}'},\\
\phi_{\alpha,\delta}:\Sigma_{\mathbf{P},\mathbf{a}',\mathbf{b}'}\rightarrow\Sigma_{\mathbf{P},\mathbf{a},\mathbf{b}},
\end{align*}
and $\phi_{\alpha,\delta}$ is as in (\ref{4121}). Note that (\ref{isotopyfor}) is well defined because $\vec{\omega}$ consists of points away from the nodes. It is shown in \cite{notes} that $\alpha^{*}(\vec{e},\mathbf{a},\mathbf{b},\vec{\omega},\vec{\zeta},f)$ is a solution to (\ref{solution}) and preserves the slicing conditions as required.\\

\noindent\textbf{Coordinate changes}: We next describe the effect on a single chart of changing the normalization, center, and slicing conditions. These choices were required in the construction of a basic chart. These formulas will be needed to understand the coordinate changes of an atlas. We will construct the sum of two charts in coordinates and describe the coordinate changes. The general construction of sum charts and coordinate changes is argued in the same way. For details of this, refer to \cite{notes}.\\

\noindent\textbf{Change of normalization:} Let $\mathbf{P}_1,\mathbf{P}_2$ be two normalizations. Each determine a domain $U_{\mathbf{P}_i}$. Let $\phi_{\mathbf{P}_2,\mathbf{P}_1}:\Sigma_{\mathbf{P}_2,0}\rightarrow\Sigma_{\mathbf{P}_1,0}$ be the unique biholomorphism that takes the points $\mathbf{n}_0,\mathbf{w}_0,\mathbf{z}_0$ in $\Sigma_{\mathbf{P}_2}$ with labels determined by $\mathbf{P}_1$ to the positions fixed by $\mathbf{P}_1$ in $\Sigma_{\mathbf{P}_1,0}$. Then for each $\delta\in\Delta$, the fibre $\Sigma_\delta$ has two normalizations $\Sigma_{\mathbf{P},\delta} = \Sigma_{\mathbf{P}_i,\mathbf{a}_i,\mathbf{b}_i},i=0,1$. Let $\iota_{\mathbf{P}_i}$ be the maps from $(\ref{iota})$. There is a unique biholomorphism
\[\phi_{\mathbf{P}_2,\mathbf{P}_1,\delta}:\Sigma_{\mathbf{P}_2,\mathbf{a}_2,\mathbf{b}_2}\rightarrow\Sigma_{\mathbf{P}_1,\mathbf{a}_1,\mathbf{b}_1}\]
defined by the equation
\[\phi_{\mathbf{P}_2,\mathbf{P}_1,\delta}:=\iota_{\mathbf{P}_1,\mathbf{a}_1,\mathbf{b}_1}\circ\phi_{\mathbf{P}_2,\mathbf{P}_1}\circ\iota^{-1}_{\mathbf{P}_2,\mathbf{a}_2,\mathbf{b}_2}\]
wherever the right side is defined. Then the change of normalization is given by $\phi_{\mathbf{P}_2,\mathbf{P}_1,\delta}$. Note that $\phi_{\mathbf{P}_2,\mathbf{P}_1,\delta}$ acts on $\vec{\omega},\vec{\zeta}$ because these points lie away from the nodes.

Thus, there is a corresponding change of normalization on the domains $U_{\mathbf{P}_1}\rightarrow U_{\mathbf{P}_2}$ given by
\begin{align}
(\vec{e},\mathbf{a}_1,\mathbf{b}_1,\vec{\omega},\vec{\zeta},f) &\mapsto \phi_{\mathbf{P}_2,\mathbf{P}_1}^{*}(\vec{e},\mathbf{a}_1,\mathbf{b}_1,\vec{\omega},\vec{\zeta},f)\nonumber\\
&:= (\vec{e},\mathbf{a}_2,\mathbf{b}_2,\phi_{\mathbf{P}_2,\mathbf{P}_1}^{-1}(\vec{\omega}),\phi_{\mathbf{P}_2,\mathbf{P}_1}^{-1}(\vec{\zeta}),f\circ\phi_{\mathbf{P}_2,\mathbf{P}_1,\delta}).
\end{align}

\noindent\textbf{Change of center:} Suppose we are given a chart $U_{\mathbf{P}_1}$ constructed using center $\tau_1:=[\Sigma_{01},\mathbf{z}_{01},f_{01}]$, slicing manifold $Q$, and normalization $\mathbf{P}_1$. We will examine the change from center $\tau_1$ to center $\tau_2:=[\Sigma_{02},\mathbf{z}_{02},f_{02}]$, but keeping the same slicing manifold and the normalization $\mathbf{P}_2$ that includes all the nodal points of $\mathbf{P}_1$ that have not been glued together with an appropriate subset of the marked points $\mathbf{w},\mathbf{z}$ fixed by $\mathbf{P}_1$. Denote $\delta_{01} := [\Sigma_{01}, \mathbf{w}_{01}, \mathbf{z}_{01}]$ and similarly $\delta_{02}$.

If $\tau_2$ contains fewer nodes than $\tau_1$, then there is no hope that whole footprint $F$ be covered by a chart based at $\tau_2$. However, in \cite{notes} McDuff describes a $\Gamma$-invariant neighborhood $U_{\mathbf{P}_1|_{\Delta_2}}$ of $\psi^{-1}(F\cap X_{\geq \tau_2})$ in $U_{\mathbf{P}_1}$ that can be represented in the normalization $\mathbf{P}_2$. Here $X_{\geq \tau_2}$ is the set of elements of $X$ containing at least as many nodes as $\tau_2$. Let $\mathbf{a}_i,\mathbf{b}_i$ be parameters and $\iota_{\mathbf{P}_i}$ be maps as in (\ref{iota}) so that the composite $\iota_{\mathbf{P}_2}^{-1}\circ\iota_{\mathbf{P}_1}|_{\Delta_2}$ has the form
\[\iota_{\mathbf{P}_2^{-1}}\circ\iota_{\mathbf{P}_1|_{\Delta_2}}:(\mathbf{a}_1,\mathbf{b}_1,\vec{\omega_1},\vec{\zeta_1})\mapsto(\mathbf{a}_2,\mathbf{b}_2,\vec{\omega_2},\vec{\zeta_2}).\]
Define
\[\phi_\delta:\Sigma_{(\mathbf{P}_2,\delta_{02}),\delta}\rightarrow\Sigma_{(\mathbf{P}_1,\delta_{01}),\delta}\]
to be the biholomorphic map that equals $\iota_{\mathbf{P}_1,\mathbf{a}_1,\mathbf{b}_1}\circ(\iota_{\mathbf{P}_2}\circ\iota_{\mathbf{P}_1|_{\Delta_2}})^{-1}\circ\iota_{\mathbf{P}_2,\mathbf{a}_2,\mathbf{b}_2}^{-1}$ wherever this is defined. Then it is shown in \cite{notes} that change of center on the domains $U_{\mathbf{P}_1|_{\Delta_2}}\rightarrow U_{\mathbf{P}_2}$ is given by
\begin{equation}
(\vec{e},\mathbf{a}_1,\mathbf{b}_1,\vec{\omega_1},\vec{\zeta_1},f)\mapsto (\vec{e},\mathbf{a}_2,\mathbf{b}_2,\vec{\omega_2},\vec{\zeta_2},f\circ\phi_\delta).
\end{equation}

\noindent\textbf{Construction of sum charts:} We now describe the construction of the sum of two charts following \cite{notes}. Suppose that we are given two charts $\mathbf{K}_i$ with overlapping footprints $F_i$. For simplicity we will assume that the center of $\mathbf{K}_2$ is contained in $F_1$. For the general case see \cite[Remark 4.1.5]{notes}. We will construct the sum chart $\mathbf{K}_{12}$ with footprint $F_{12}=F_1\cap F_2$, obstruction space $E_{12}:=E_1\times E_2$, and isotropy group $\Gamma_{12}:=\Gamma_1\times \Gamma_2$. We will construct this sum chart using the coordinates determined by the center of $\mathbf{K}_2$ and the normalization $\mathbf{P}_2$. Similar to the construction of a basic chart, define $\mathcal{W}_{12,\mathbf{P}_2}$ to be tuples $(\vec{e_1},\vec{e_2},\mathbf{a},\mathbf{b},\vec{\omega_1},\vec{\omega_2},\vec{\zeta},f)$ where:
\begin{enumerate}[(i)]
\item $\vec{e_i}\in E_i$.
\item $\mathbf{a}\in B^{2K},\mathbf{b}\in B^{2(2K-p_n)}$.
\item $f:\Sigma_{\mathbf{P}_2,\mathbf{a},\mathbf{b}}\rightarrow M$.
\end{enumerate}
Then define $\widehat{U}_{12,\mathbf{P}_2}$ to a suitable open subset of solution space
\begin{align*}
\widehat{U}_{12,\mathbf{P}_2}\subset &\Big\{(\vec{e_1},\vec{e_2},\mathbf{a},\mathbf{b},\vec{\omega_1},\vec{\omega_2},\vec{\zeta},f)\in\mathcal{W}_{12,\mathbf{P}_2} ~|~  \\
&\bar{\partial}_J(\vec{e_1},\vec{e_2},\mathbf{a},\mathbf{b},\vec{\omega_1},\vec{\omega_2},\vec{\zeta},f) = \sum_{i=1,2}\sum_{\gamma\in \Gamma_i}\gamma^{*}(\lambda_i(e_i^\gamma))|_{\textnormal{graph }f}\Big\}
\end{align*}
with the action $\gamma^{*}$ given by (\ref{gammaactlambda}) and $\phi_{\gamma,\delta_i}$ is as in (\ref{4121}) for
\[\delta_i:=[\Sigma_{\mathbf{P}_2,\mathbf{a},\mathbf{b}},\mathbf{w}_i,\mathbf{z}_i]\]
with $\mathbf{w}_i = \iota_{\mathbf{P}_2,\mathbf{a},\mathbf{b}}(\vec{\omega_i})$ and $\mathbf{z}_i = \iota_{\mathbf{P}_2,\mathbf{a},\mathbf{b}}(\vec{\zeta_i})$.
Elements $\gamma_1\in\Gamma_1$ act on elements of $\widehat{U}_{12,\mathbf{P}_2}$ by simple permutation:
\[\gamma_1^{*}(\vec{e_1},\vec{e_2},\mathbf{a},\mathbf{b},\vec{\omega_1},\vec{\omega_2},\vec{\zeta},f) = (\gamma_1\cdot \vec{e_1},\vec{e_2},\mathbf{a},\mathbf{b},\gamma\cdot\vec{\omega_1},\vec{\omega_2},f).\]
On the other hand, elements $\gamma_2\in\Gamma_2$ act by permutation and renormalization:
\[\gamma_2^{*}(\vec{e_1},\vec{e_2},\mathbf{a},\mathbf{b},\vec{\omega_1},\vec{\omega_2},\vec{\zeta},f) = (\vec{e_1},\gamma_2\cdot\vec{e_2},\mathbf{a}',\mathbf{b}',\phi_{\gamma_2}^{-1}(\vec{\omega_1}),\phi_{\gamma_2}^{-1}(\gamma_2\cdot\omega_2),\phi_{\gamma_2}^{-1}\vec{\zeta},f\circ\phi_{\gamma_2,\delta_2})\]
where $\phi_{\gamma_2}:=\phi_{\mathbf{P}_2,\gamma_2,\delta_2}$ as in (\ref{phip}).

Finally the domain $U_{12,\mathbf{P}_2}$ is obtained from $\widehat{U}_{12,\mathbf{P}_2}$ by cutting down by slicing conditions. To complete the discussion of the sum chart $\mathbf{K}_{12}$ we describe the coordinate changes $\mathbf{K}_i\rightarrow\mathbf{K}_{12}$. The coordinate change $\mathbf{K}_2\rightarrow\mathbf{K}_{12}$ is induced by the projection
\begin{align*}
\rho_{2,12}:U_{12,\mathbf{P}_2}\cap s_{12}^{-1}(E_1)&\rightarrow U_2\\
(\vec{0},\vec{e_2},\mathbf{a},\mathbf{b},\vec{\omega_1},\vec{\omega_2},\vec{\zeta},f)&\mapsto(\vec{e_2},\mathbf{a},\mathbf{b},\vec{\omega_2},\vec{\zeta},f).
\end{align*}
The coordinate change $\mathbf{K}_1\rightarrow\mathbf{K}_{12}$ has domain $\widetilde{U}_{1,12}:=U_{12,\mathbf{P}_2}\cap s_{12}^{-1}(E_1)$ (that is $\widetilde{U}_{1,12}$ has $\vec{e}_1=0$) and is given by first changing the normalization from $\mathbf{P}_2$ to $\mathbf{P}_1$ and then forgetting the components of $\vec{\omega_2}$ to obtain a map $\rho_{1,12}:\widetilde{U}_{1,12}\rightarrow U_{12,\mathbf{P}_1}$.

\subsection{$C^1$ SS Gromov-Witten charts}\label{c1sscharts}
This section will describe how a $C^1$ gluing theorem gives the Gromov-Witten charts described in Sections \ref{coordfree} and \ref{chartsincoords} the structure of a $C^1$ SS Kuranishi atlas.

We will now slightly modify the Gromov-Witten charts $U$ of Section \ref{chartsincoords} by reparametrization in order to achieve $C^1$-differentiability. We will first work with $\widehat{U}$, which is the open set from which we obtain the domain $U$ by making $\widehat{U}$ $\Gamma$-invariant and cutting down by slicing conditions (as in (\ref{slice})).

As in Section \ref{chartsincoords},
\[\widehat{U} \subset E\times B^{2(L+k-p_w-p_z)}\times W^{1,p}(\mathcal{C}|_{\Delta},M)\]
and consists of tuples $(\vec{e},\mathbf{a},\mathbf{b},\vec{\omega},\vec{\zeta},f)$ with $\vec{e}\in E$, $\mathbf{a}$ are gluing parameters, $\mathbf{b}$ are parameters describing the position of free nodes, $\vec{\omega},\vec{\zeta}$ describe the positions of nonfixed $\mathbf{w},\mathbf{z}$, and $f$ is  a map
\begin{equation}\label{reparam}
f:\Sigma_{\mathbf{P},\mathbf{a},\mathbf{b}}\rightarrow M
\end{equation}
that is a solution of $(\ref{solution})$. Thus, $\widehat{U}$ is the zero set of the section
\begin{align}
F:E\times B^{2(L+k-p_w-p_z)}\times W^{1,p}(\mathcal{C}|_\Delta,M)&\rightarrow L^p\big(Hom_J^{0,1}(\Sigma_{\mathbf{P},\mathbf{a},\mathbf{b}}\times M)\big)\\
F(\vec{e},\mathbf{a},\mathbf{b}, \vec{\omega},\vec{\zeta},f)&=\overline{\partial}_Jf - \lambda(\vec{e})|_{\textnormal{graph}f}\nonumber.
\end{align}
By choosing an appropriate obstruction space $E$, as described in Section \ref{coordfree}, we can assume that $dF$ is surjective at $(0,0,0,0,0,f_0)$. Thus, the space of solutions at $\mathbf{a}=\mathbf{b}=0$ is a smooth manifold. Let $W$ denote a precompact open subset around $f_0$ in this manifold (this can be done, for example, by considering maps with a fixed bound on the $L^\infty$ norm of their first derivative; c.f. the beginning of Section \ref{c1gluing}).

We will give $\widehat{U}$ a smooth structure via reparametrized gluing maps. That is, charts from Euclidean space to $\widehat{U}$ are given by
\begin{align}
\nonumber\Phi:E\times B^{2K}\times B^{2(2K-p_n)} &\times B^{2(L-p_w)} \times B^{2(k-p_z)} \times W \rightarrow \widehat{U}\\
(\vec{e},\widetilde{\mathbf{a}},\mathbf{b}, \vec{\omega},\vec{\zeta},\bar{f}) &\mapsto \big(\vec{e},\mathbf{a},\mathbf{b}, \vec{\omega},\vec{\zeta},gl_{\mathbf{a},\mathbf{b}}(\bar{f})\big)\\
\nonumber &=\Big(\vec{e},\psi(\widetilde{\mathbf{a}}),\mathbf{b}, \vec{\omega},\vec{\zeta},gl_{\psi(\widetilde{\mathbf{a}}),\mathbf{b}}(\bar{f})\Big),
\end{align}
where $W$ is the manifold of (nodal) solutions to $(\ref{solution})$ at $\mathbf{a}=\mathbf{b}=0$, $gl_{\mathbf{a},\mathbf{b}}(\bar{f})$ denotes the gluing of nodal map $\bar{f}$ with parameters $\mathbf{a},\mathbf{b}$ and $\psi$ is the reparametrization map from $(\ref{psireparam}),(\ref{tildenotation})$, and we use the notation of Section \ref{smoothstructuresection} so that
 \[\psi(\mathbf{\widetilde{a}}) = \mathbf{a}.\]
We use the classical gluing map of \cite{JHOL}; this map is described in detail in Section \ref{c1gluing}. The main theorem of this section is Theorem \ref{c1atlasthm}, which states that with these reparametrized charts, $\widehat{U}$ is a $C^1$ SS manifold with a $C^1$ SS action of $\Gamma$ and the spaces $U$ are domains for a $C^1$ SS Kuranishi atlas. This proves Theorem \ref{admitthm}. We will use a $C^1$ gluing theorem to prove Theorem \ref{c1atlasthm}. The specific version we will use is stated as Proposition \ref{corc1} below.

\begin{theorem}\label{c1atlasthm}
Let $(M^{2n},\omega,J)$ be a $2n$-dimensional symplectic manifold with a tame almost complex structure $J$. Let $X=\overline{\mathcal{M}}_{0,k}(A,J)$ be the compact space of nodal $J$-holomorphic genus zero stable maps in class $A \in H_2(M)$ with $k$ marked points modulo reparametrization. Let $[f_0]\in X$ and $\widehat{U}$ be the set around $f_0$ as described in Section \ref{chartsincoords}. So $\widehat{U}$ is the zero set of the section
\begin{align}\label{foperator}
F:E\times B^{2(L+k-p_w-p_z)}\times W^{1,p}(\mathcal{C}|_\Delta,M)&\rightarrow L^p\big(Hom_J^{0,1}(\Sigma_{\mathbf{P},\mathbf{a},\mathbf{b}}\times M)\big)\\
F(\vec{e},\mathbf{a},\mathbf{b}, \vec{\omega},\vec{\zeta},f)&=\overline{\partial}_Jf - \lambda(\vec{e})|_{\textnormal{graph}f}\nonumber.
\end{align}
Let $E$ be the obstruction space as described in Section \ref{coordfree}. Then
\begin{enumerate}[(i)]
\item\label{c11} $\widehat{U}$ is a $C^1$ SS manifold with stratification given by the $\mathbf{a}$ variables and $C^1$ SS structure given by charts
\begin{align}
\nonumber\Phi:E\times B^{2K}\times B^{2(2K-p_n)} &\times B^{2(L-p_w)} \times B^{2(k-p_z)} \times W \rightarrow \widehat{U}\\
\label{gluingcharts}(\vec{e},\widetilde{\mathbf{a}},\mathbf{b}, \vec{\omega},\vec{\zeta},\bar{f}) &\mapsto \big(\vec{e},\mathbf{a},\mathbf{b}, \vec{\omega},\vec{\zeta},gl_{\mathbf{a},\mathbf{b}}(\bar{f})\big)\\
\nonumber &=\Big(\vec{e},\psi(\widetilde{\mathbf{a}}),\mathbf{b}, \vec{\omega},\vec{\zeta},gl_{\psi(\widetilde{\mathbf{a}}),\mathbf{b}}(\bar{f})\Big),
\end{align}
where $gl_{\mathbf{a},\mathbf{b}}(\bar{f})$ denotes the gluing of nodal map $\bar{f}$ with parameters $\mathbf{a},\mathbf{b}$ and $W$ is a precompact open subset of the smooth manifold of $W^{1,p}$ solutions to $(\ref{solution})$ for $\mathbf{a}=\mathbf{b}=0$ containing $f_0$.
\item\label{c12} The isotropy group action on $\widehat{U}$, as described in Section \ref{chartsincoords}, is $C^1$ SS.
\item\label{c13} The marked points $\vec{\omega}$ given by the slicing conditions for the domain $U$ are $C^1$ SS in the sense that the map
    \begin{align*}
    (\widetilde{\mathbf{a}},\mathbf{b},f)\mapsto \vec{\omega}\in B^{2(L-p_w)}
    \end{align*}
    is $C^1$ SS.
\item\label{c14} The domain $U$ is a $C^1$ SS submanifold of $\widehat{U}$, $\Gamma$ acts $C^1$ SS on $U$, and coordinate changes are $C^1$ SS maps. Hence, these are charts for a $C^1$ SS Kuranishi atlas $\mathcal{K}$ on the Gromov-Witten moduli space $X$.
\end{enumerate}
\end{theorem}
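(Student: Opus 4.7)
The plan is to reduce the four assertions to two main inputs: the $C^1$ gluing theorem (Proposition \ref{corc1}), which controls the dependence of the glued map $gl_{\mathbf{a},\mathbf{b}}(\bar f)$ on the reparametrized gluing parameters $\widetilde{\mathbf{a}}$, and the $C^1$ SS structure on Deligne-Mumford space from Proposition \ref{dmlemma}, which controls the dependence of the underlying domain on $\widetilde{\mathbf{a}}$ and $\mathbf{b}$ under changes of normalization and center. For part \ref{c11}, I would first invoke surjectivity of $dF$ at the nodal point $(0,0,0,0,0,f_0)$ (built into the choice of $E$) to conclude that $W$ is a genuine smooth manifold of nodal solutions near $f_0$. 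Proposition \ref{corc1} then guarantees that the map $(\widetilde{\mathbf{a}}, \mathbf{b}, \bar f) \mapsto gl_{\psi(\widetilde{\mathbf{a}}), \mathbf{b}}(\bar f)$ is $C^1$ everywhere and $C^\infty$ on each stratum cut out by the vanishing pattern of the components of $\widetilde{\mathbf{a}}$. Hence each chart $\Phi$ from (\ref{gluingcharts}) is a $C^1$ SS map into $\widehat U$; surjectivity of $dF$ together with an implicit function theorem argument shows $\Phi$ is a local $C^1$ SS homeomorphism onto its image. Transitions between two such charts built from different pregluing constructions or from different nearby nodal solutions are $C^1$ SS by applying the $C^1$ SS implicit function theorem to $F = 0$.

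For part \ref{c12}, I would decompose the isotropy action (\ref{isotopyfor}) into two pieces: the induced action on the parameters $(\mathbf{a}, \mathbf{b}, \vec\omega, \vec\zeta)$, which is precisely a change of coordinates on $\overline{\mathcal{M}}_{0,k+L}$ and is therefore $C^1$ SS in $(\widetilde{\mathbf{a}}, \mathbf{b})$ by Proposition \ref{dmlemma}; and the renormalization $f \mapsto f \circ \phi_{\alpha, \delta}$. The biholomorphism $\phi_{\alpha,\delta}$ depends smoothly on $\delta$ on compact subsets away from the nodes, and since $\vec\omega, \vec\zeta$ consist of points away from the nodes this suffices after the usual partition into neck and non-neck regions. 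Composing with $f$ regarded as a $C^1$ SS function via \ref{c11} then yields the desired $C^1$ SS action. For part \ref{c13}, the slicing conditions $f(\omega_i) \in Q_i$ are transverse by the construction of $Q_i$ in Section \ref{coordfree}, and since $f$ is $C^1$ SS in $(\widetilde{\mathbf{a}}, \mathbf{b}, \bar f)$ by \ref{c11}, the $C^1$ SS implicit function theorem produces $\vec\omega$ as a $C^1$ SS function of the remaining variables.

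Part \ref{c14} is then a matter of assembly. The domain $U$ is cut out of $\widehat U$ by the $C^1$ SS equations of \ref{c13} and so is a $C^1$ SS submanifold; the $\Gamma$-action preserves $U$ and restricts to a $C^1$ SS action by \ref{c12}; and the coordinate changes between sum charts decompose, as in Section \ref{chartsincoords}, into change of normalization, change of center, and projections that forget obstruction and marked-point components. Each constituent is either a reparametrization on the Deligne-Mumford factor (handled by Proposition \ref{dmlemma}) or precomposition of $f$ by a biholomorphism of the domain that depends $C^1$ SS on $(\widetilde{\mathbf{a}}, \mathbf{b})$ away from the nodes, so the composite is $C^1$ SS. The weak cocycle condition for $\mathcal{K}$ was proved in \cite{notes} as an equality of continuous maps and transfers to the $C^1$ SS category without additional work.

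The main obstacle is Proposition \ref{corc1}, the $C^1$ gluing theorem in the reparametrized variable $\widetilde{\mathbf{a}}$. With the unreparametrized parameter $\mathbf{a}$ the classical gluing map of \cite{JHOL} is only H\"older continuous; the reparametrization $\mathbf{a} = \psi(\widetilde{\mathbf{a}}) = |\widetilde{\mathbf{a}}|^{p-1}\widetilde{\mathbf{a}}$ is chosen precisely to convert this to $C^1$ regularity across the nodal stratum, by absorbing the H\"older loss into the vanishing derivative of $\psi$ at $0$. Proving this requires a quantitative analysis of the pregluing error and of the right inverse of the linearized $\bar\partial$ operator, together with a decay estimate for the variation of $gl_{\mathbf{a},\mathbf{b}}(\bar f)$ in $\mathbf{a}$ strong enough to survive composition with $\psi$; this is the analytic content of Section \ref{c1gluing}.
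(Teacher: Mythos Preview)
Your overall strategy matches the paper's: reduce (\ref{c11})--(\ref{c14}) to Proposition \ref{corc1} and Proposition \ref{dmlemma}, decompose the isotropy action into a Deligne--Mumford part on the parameters $(\widetilde{\mathbf a},\mathbf b,\vec\omega,\vec\zeta)$ and a renormalization $f\mapsto f\circ\phi_{\alpha,\delta}$ on the map, and assemble for (\ref{c14}).

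There is, however, a gap in your treatment of (\ref{c12}). In the local coordinates given by the chart $\Phi$, the isotropy action sends a nodal solution $\bar f\in W$ to another nodal solution $\bar f'\in W$ characterised by $gl_{\mathbf a',\mathbf b'}(\bar f')=gl_{\mathbf a,\mathbf b}(\bar f)\circ\phi_{\alpha,\delta}$. Proving that $(\widetilde{\mathbf a},\mathbf b,\bar f)\mapsto(\widetilde{\mathbf a}',\mathbf b',\bar f')$ is $C^1$ SS therefore requires $C^1$ SS regularity not only of the gluing map (Proposition \ref{corc1}) but also of its \emph{inverse}, and you do not address this. The paper supplies the missing step by noting that, after pulling back to the fixed domain via $\iota_{\mathbf P,\mathbf a,\mathbf b}$, the map
\[
(\widetilde{\mathbf a},\mathbf b,\bar f)\ \longmapsto\ \bigl(\widetilde{\mathbf a},\mathbf b,\ gl_{\psi(\widetilde{\mathbf a}),\mathbf b}(\bar f)\circ\iota_{\mathbf P,\psi(\widetilde{\mathbf a}),\mathbf b}\bigr)
\]
has the form $(A,F)\mapsto(A,\varphi_A(F))$ with each $\varphi_A$ a local diffeomorphism varying $C^1$ SS in $A$; such maps have $C^1$ SS inverses by elementary reasoning, and this is what gives control of $gl^{-1}$. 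A second omission is the finite-dimensionality of $W$: the paper uses this to replace the $W^{1,p}$ norm on the full nodal domain by the equivalent norm obtained by restricting to $\Sigma_{\mathbf P,0}\setminus\mathcal N(\textnormal{nodes})$, which is exactly what brings Proposition \ref{corc1} into play here, and in (\ref{c13}) the same trick lets one pass to $W^{k,p}$ norms with $k\geq 2$ so that the implicit function theorem for $f\mapsto f^{-1}(Q)$ yields a $C^1$ (not merely $C^0$) map. Without these two points your sketch of (\ref{c12}) and (\ref{c13}) does not close.
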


\begin{proof}[Proof of Theorem \ref{c1atlasthm}]
We must check several things to prove this theorem. We first show that $\widehat{U}$ is a $C^1$ SS manifold with stratification given by the $\mathbf{a}$ variables. We then show that the isotropy group action of $\Gamma$ on $\widehat{U}$ is $C^1$ SS. Then we must prove that $U$ is a $C^1$ SS submanifold of $\widehat{U}$ with a $C^1$ SS action of $\Gamma$. Finally, we show that the coordinate change maps of the atlas $\mathcal{K}$ are $C^1$ SS. These facts together imply that $\mathcal{K}$ is a $C^1$ SS atlas.\\

\noindent\textbf{Proof of (\ref{c11})--Smoothness of $\widehat{U}$}: The space $\widehat{U}$ is defined to be the zero set of the operator $F$ from (\ref{foperator}). As earlier, the linearization $dF$ is surjective at $(0,0,0,0,0,f_0)$. The gluing theorem of \cite{JHOL} implies that $F^{-1}(0)$ is a product of the space of solutions at $\mathbf{a}=\mathbf{b}=0$ with a small neighborhood of $0$ in the parameter space $(\mathbf{a},\mathbf{b})$. Finally, $\widehat{U}$ inherits a $C^1$ SS structure via the charts (\ref{gluingcharts}). This proves (\ref{c11}).\\

\noindent\textbf{Proof of (\ref{c12})--Isotropy group action}: Section \ref{chartsincoords} gives an explicit description of the isotropy group action $\Gamma$ in coordinates. In particular, (\ref{isotopyfor}) describes that action of $\alpha\in\Gamma$ to be
\[\alpha^{*}(\vec{e},\mathbf{a},\mathbf{b},\vec{\omega},\vec{\zeta},f) = (\alpha\cdot\vec{e},\mathbf{a}',\mathbf{b}',\phi^{-1}_{\mathbf{P},\alpha,\delta}(\alpha\cdot\vec{\omega}),\phi^{-1}_{\mathbf{P},\alpha,\delta}(\vec{\zeta}),f\circ\phi_{\alpha,\delta})\]
where \begin{align*}
\delta=[\mathbf{n},\mathbf{w},\mathbf{z}]\in\overline{\mathcal{M}}_{0,k},\\
\Sigma_{\mathbf{P},\delta}:=\Sigma_{\mathbf{P},\mathbf{a},\mathbf{b}},\\
\Sigma_{\mathbf{P},\alpha^{*}(\delta)}=\Sigma_{\mathbf{P},\mathbf{a}',\mathbf{b}'},\\
\phi_{\alpha,\delta}:\Sigma_{\mathbf{P},\mathbf{a}',\mathbf{b}'}\rightarrow\Sigma_{\mathbf{P},\mathbf{a},\mathbf{b}},
\end{align*}
and $\phi_{\alpha,\delta}$ is as in (\ref{4121}). In the local coordinates determined by the charts (\ref{gluingcharts}), the isotropy action is
\[(\vec{e},\widetilde{\mathbf{a}},\mathbf{b},\vec{\omega},\vec{\zeta},\bar{f}) \mapsto (\alpha\cdot\vec{e},\widetilde{\mathbf{a}}',\mathbf{b}',\phi^{-1}_{\mathbf{P},\alpha,\delta}(\alpha\cdot\vec{\omega}),\phi^{-1}_{\mathbf{P},\alpha,\delta}(\vec{\zeta}),\bar{f}')\]
where $gl_{\psi(\widetilde{\mathbf{a}}),\mathbf{b}}(\bar{f})=f, gl_{\psi(\widetilde{\mathbf{a}}'),\mathbf{b}'}(\bar{f}')=f\circ\phi_{\alpha,\delta}.$ (See Figure \ref{uhattrans}.)

The maps $\vec{e}\mapsto\gamma\cdot\vec{e}$ are clearly smooth. Recall that the map $\phi_{\mathbf{P},\alpha,\delta}$ does not depend on the map $f$. We next notice that the map
\[(\vec{e},\widetilde{\mathbf{a}},\mathbf{b},\vec{\omega},\vec{\zeta}) \mapsto (\alpha\cdot\vec{e},\widetilde{\mathbf{a}}',\mathbf{b}',\phi^{-1}_{\mathbf{P},\alpha,\delta}(\alpha\cdot\vec{\omega}),\phi^{-1}_{\mathbf{P},\alpha,\delta}(\vec{\zeta}))\]
is exactly the form of a transition function on $\overline{\mathcal{M}}_{0,k+L}^{new}$ as described in Section \ref{smoothstructuresection}. These maps were shown to be $C^1$ SS.

\begin{figure}
\begin{center}
\mbox{}
\begindc{\commdiag}[5]
\obj(0,20)[A]{$\widehat{U}\ni\big(\vec{e},\mathbf{a},\mathbf{b},\vec{\omega},\vec{\zeta},gl_{\mathbf{a},\mathbf{b}}(\bar{f})\big)$}
\obj(39,20)[B]{$\big(\alpha\cdot\vec{e},\mathbf{a}',\mathbf{b}',\phi^{-1}_{\mathbf{P},\alpha,\delta}(\alpha\cdot\vec{\omega}),\phi^{-1}_{\mathbf{P},\alpha,\delta}(\vec{\zeta}),gl_{\mathbf{a},\mathbf{b}}(\bar{f})\circ\phi_{\alpha,\delta}\big)\in\widehat{U}$}
\obj(2,10)[C]{$(\vec{e},\widetilde{\mathbf{a}},\mathbf{b},\vec{\omega},\vec{\zeta},\bar{f})$}
\obj(43,10)[D]{$(\vec{e},\widetilde{\mathbf{a}}',\mathbf{b}', \vec{\omega}',\vec{\zeta}',\bar{f}')$}
\mor(10,20)(16,20){}[\atleft,\aplicationarrow]
\mor(8,10)(35,10){}[\atleft,\aplicationarrow]
\mor(2,10)(2,20){$\Phi$}[\atleft,\aplicationarrow]
\mor(43,10)(43,20){$\Phi$}[\atleft,\aplicationarrow]
\enddc
\end{center}
\caption{Isotropy group action on $\widehat{U}$ in local coordinates}\label{uhattrans}
\end{figure}

Finally, we must consider the map $\bar{f} \mapsto \bar{f}'$. We first note that both $\bar{f},\bar{f}'$ are elements of $W$, the precompact space of $W^{1,p}$ solutions to $(\ref{solution})$. The space $W$ inherits norms from $W^{1,p}(\Sigma_{\mathbf{P},0},M)$ and also $W^{1,p}\big(\Sigma_{\mathbf{P},0}\setminus \mathcal{N}(nodes),M\big)$ by restriction. These two norms are equivalent because $W$ is finite dimensional and all norms on finite dimensional spaces are equivalent. Therefore, it suffices to work on $(\Sigma_0\setminus \mathcal{N}(nodes))$ and we denote the restriction of $W$ by $W_N\subset W^{1,p}\big(\Sigma_{\mathbf{P},0}\setminus \mathcal{N}(nodes),M\big)$.

Let $W_N^0$ denote a small open neighborhood of $f_0$ in $W_N$. Proposition \ref{corc1} below will let us analyze the map
\begin{align}
\nonumber B^{2K}\times B^{2(2K-p_n)} \times W_N^0 &\rightarrow B^{2K}\times B^{2(2K-p_n)}\times W_N^0\\
\label{c1ssiso}(\widetilde{\mathbf{a}},\mathbf{b},\bar{f}) &\mapsto (\widetilde{\mathbf{a}}',\mathbf{b}',\bar{f}').
\end{align}
To do this we use the gluing maps $gl_{\mathbf{a},\mathbf{b}}$ and the maps
\[\iota_{\mathbf{P},\mathbf{a},\mathbf{b}} : (\Sigma_{\mathbf{P},0}\setminus \mathcal{N}(nodes))\rightarrow (\Sigma_{\mathbf{P},\mathbf{a},\mathbf{b}}\setminus\mathcal{N}(nodes))\]
from (\ref{iota}) that identify domains.

\begin{proposition}\label{corc1}
Under the hypotheses above, the map
\begin{align*}
B^{2K}\times B^{2(2K-p_n)} \times W_N^0 &\rightarrow W_N^0\\
(\widetilde{\mathbf{a}},\mathbf{b},\bar{f}) &\mapsto gl_{\psi(\widetilde{\mathbf{a}}),\mathbf{b}}(\bar{f})\circ\iota_{\mathbf{P},\psi(\widetilde{\mathbf{a}}),\mathbf{b}}
\end{align*}
is $C^1$ SS.
\end{proposition}
\begin{proof}[Proof of Proposition \ref{corc1}]
Proposition \ref{corc1} follows from the $C^1$ gluing results of Section \ref{c1gluing}. The technical component is Theorem \ref{mainthm}. The special case of Proposition \ref{corc1} for two regular $J$-holomorphic spheres meeting at a node is stated and proved as Corollary \ref{corc1easy}. For generalizing Corollary \ref{corc1easy}, see Remark \ref{extensions}.
\end{proof}

Continuing to consider (\ref{c1ssiso}), by following definitions and Figure \ref{uhattrans} we see
\begin{align*}
\bar{f}' &= gl_{\mathbf{a}',\mathbf{b}'}^{-1}(f\circ\phi_{\alpha,\delta})\\
&=gl_{\mathbf{a}',\mathbf{b}'}^{-1}\big(gl_{\mathbf{a},\mathbf{b}}(\bar{f})\circ\phi_{\alpha,\delta}\big)\\
&=gl_{\mathbf{a}',\mathbf{b}'}^{-1}\big(gl_{\mathbf{a},\mathbf{b}}(\bar{f})\circ\iota_{\mathbf{P},\mathbf{a},\mathbf{b}}\circ\iota_{\mathbf{P},\mathbf{a},\mathbf{b}}^{-1}\circ\phi_{\alpha,\delta}\circ\iota_{\mathbf{P},\mathbf{a}',\mathbf{b}'}\circ\iota_{\mathbf{P},\mathbf{a}',\mathbf{b}'}^{-1}\big)\\
&=gl_{\psi(\widetilde{\mathbf{a}}'),\mathbf{b}'}^{-1}\big(gl_{\psi(\widetilde{\mathbf{a}}),\mathbf{b}}(\bar{f})\circ\iota_{\mathbf{P},\psi(\widetilde{\mathbf{a}}),\mathbf{b}}\circ\iota_{\mathbf{P},\psi(\widetilde{\mathbf{a}}),\mathbf{b}}^{-1}\circ\phi_{\alpha,\delta}\circ\iota_{\mathbf{P},\psi(\widetilde{\mathbf{a}}'),\mathbf{b}'}\circ\iota_{\mathbf{P},\psi(\widetilde{\mathbf{a}}'),\mathbf{b}'}^{-1}\big).
\end{align*}
So (\ref{c1ssiso}) can be written as
\begin{align}
\label{provec1ss}&(\widetilde{\mathbf{a}},\mathbf{b},\bar{f})\mapsto\\
\nonumber&\Big(\widetilde{\mathbf{a}}',\mathbf{b}',gl_{\psi(\widetilde{\mathbf{a}}'),\mathbf{b}'}^{-1}\big(gl_{\psi(\widetilde{\mathbf{a}}),\mathbf{b}}(\bar{f})\circ\iota_{\mathbf{P},\psi(\widetilde{\mathbf{a}}),\mathbf{b}}\circ\iota_{\mathbf{P},\psi(\widetilde{\mathbf{a}}),\mathbf{b}}^{-1}\circ\phi_{\alpha,\delta}\circ\iota_{\mathbf{P},\psi(\widetilde{\mathbf{a}}'),\mathbf{b}'}\circ\iota_{\mathbf{P},\psi(\widetilde{\mathbf{a}}'),\mathbf{b}'}^{-1}\big)\Big)
\end{align}
We will prove (\ref{provec1ss}) is $C^1$ SS by examining it in pieces. First, note that (\ref{provec1ss}) is smooth in the $\bar{f}$ direction by \cite[Proposition 10.5.4]{JHOL}. Also we have already noted above that the map
\begin{equation}\label{c1diffeo}
(\widetilde{\mathbf{a}},\mathbf{b})\mapsto(\widetilde{\mathbf{a}}',\mathbf{b}')
\end{equation}
is a $C^1$ SS local diffeomorphism. The map
\begin{align*}
B^{2K}\times B^{2(2K-p_n)} \times W_N^0 &\rightarrow W_N^0\\
(\widetilde{\mathbf{a}},\mathbf{b},\bar{f}) &\mapsto gl_{\psi(\widetilde{\mathbf{a}}),\mathbf{b}}(\bar{f})\circ\iota_{\mathbf{P},\psi(\widetilde{\mathbf{a}}),\mathbf{b}}
\end{align*}
is $C^1$ SS by Proposition \ref{corc1}. Next, the composition
\begin{align*}
B^{2K}\times B^{2(2K-p_n)} &\rightarrow W^{1,p}\Big(\big(\Sigma_{\mathbf{P},0}\setminus \mathcal{N}(nodes)\big),\big(\Sigma_{\mathbf{P},0}\setminus \mathcal{N}(nodes)\big)\Big)\\
(\widetilde{\mathbf{a}},\mathbf{b}) &\mapsto \iota_{\mathbf{P},\psi(\widetilde{\mathbf{a}}),\mathbf{b}}^{-1}\circ\phi_{\alpha,\delta}\circ\iota_{\mathbf{P},\psi(\widetilde{\mathbf{a}}'),\mathbf{b}'}
\end{align*}
does not depend on $\bar{f}$ and is $C^1$ SS. Finally, we note that Proposition \ref{corc1} implies that
\[(\widetilde{\mathbf{a}},\mathbf{b},\bar{f}) \mapsto gl_{\psi(\widetilde{\mathbf{a}}),\mathbf{b}}(\bar{f})\circ\iota_{\mathbf{P},\psi(\widetilde{\mathbf{a}}),\mathbf{b}}\]
is $C^1$ SS. Consider the map
\begin{equation}\label{c1diffeq}
(\widetilde{\mathbf{a}},\mathbf{b},\bar{f}) \mapsto (\widetilde{\mathbf{a}},\mathbf{b},gl_{\psi(\widetilde{\mathbf{a}}),\mathbf{b}}(\bar{f})\circ\iota_{\mathbf{P},\psi(\widetilde{\mathbf{a}}),\mathbf{b}}).
\end{equation}
The map (\ref{c1diffeq}) is a $C^1$ SS local diffeomorphism because it is of the form
\begin{equation}\label{biga}
(A,F)\mapsto (A,\varphi_A(F))
\end{equation}
where for each $A$, $\varphi_A$ is a local diffeomorphism that varies $C^1$ SS with $A$. Therefore, (\ref{biga}) has an inverse that we will denote
\[(A,f)\mapsto(A,gl_A^{-1}(f)).\]
In this notation, (\ref{c1diffeq}) has a $C^1$ SS inverse given by
\[(\widetilde{\mathbf{a}},\mathbf{b},g) \mapsto \big(\widetilde{\mathbf{a}},\mathbf{b},gl^{-1}_{\psi(\widetilde{\mathbf{a}}),\mathbf{b}}(g\circ\iota^{-1}_{\mathbf{P},\psi(\widetilde{\mathbf{a}}),\mathbf{b}})\big).\]
Thus, because (\ref{c1diffeo}) is also a $C^1$ SS local diffeomorphism,
\[(\widetilde{\mathbf{a}},\mathbf{b},\bar{f}) \mapsto (\widetilde{\mathbf{a}}',\mathbf{b}',gl_{\psi(\widetilde{\mathbf{a}}'),\mathbf{b}'}(\bar{f})\circ\iota_{\mathbf{P},\psi(\widetilde{\mathbf{a}}'),\mathbf{b}'})\]
is a $C^1$ SS local diffeomorphism. Therefore, we see that
\begin{align*}
B^{2K}\times B^{2(2K-p_n)}\times W_N^0 &\rightarrow B^{2K}\times B^{2(2K-p_n)}\times W_N^0\\
(\widetilde{\mathbf{a}},\mathbf{b},g) &\mapsto \Big(\widetilde{\mathbf{a}}',\mathbf{b}',gl_{\psi(\widetilde{\mathbf{a}}'),\mathbf{b}'}^{-1}\big(g\circ\iota_{\mathbf{P},\psi(\widetilde{\mathbf{a}}'),\mathbf{b}'}^{-1}\big)\Big)
\end{align*}
is $C^1$ SS. Putting all of these facts together we conclude that (\ref{provec1ss}) is a $C^1$ SS local diffeomorphism and hence the action of $\Gamma$ on $\widehat{U}$ is $C^1$ SS. This proves (\ref{c12}).\\

\noindent\textbf{Proof of (\ref{c13})--Smoothness of slicing conditions}: To impose the slicing conditions, we fix the domain of $f$ and require the extra marked points to lie in the slicing manifold $Q$. More precisely,
\[f\circ \iota_{\mathbf{P},\mathbf{a},\mathbf{b}}(\omega^\ell)\in Q.\]
If $f$ is close enough to $f_0$, then $f\pitchfork Q$ and for every point in $f_0^{-1}(Q)$, there is a unique close point in $f^{-1}(Q)$. By the implicit function theorem, if $f\pitchfork Q$ then the map
\begin{align*}
W^{k,p}(S^2,M)&\rightarrow S^2\\
f&\mapsto f^{-1}(Q)
\end{align*}
is $C^\ell$ if $k>\ell+\frac{2}{p}$ so $W^{k,p}(S^2)\subset C^{\ell}(S^2)$. For more discussion of this issue, see \cite[$\S 3$]{mwfund}.

Proposition \ref{corc1} implies that the map
\begin{align*}
B^{2K}\times B^{2(2K-p_n)} &\rightarrow W_N^0\\
(\widetilde{\mathbf{a}},\mathbf{b}) &\mapsto gl_{\psi(\widetilde{\mathbf{a}}),\mathbf{b}}(\bar{f})\circ\iota_{\psi(\widetilde{\mathbf{a}}),\mathbf{b}}
\end{align*}
is $C^1$ SS as a function of $(\widetilde{\mathbf{a}},\mathbf{b})$ to the space $W_N^0$ equipped with the $W^{k,p}$-norm, $k\geq2$ (because $W_N^0$ is finite dimensional, all norms are equivalent). Hence, for a fixed $\bar{f}\in W_N^0$, we conclude that the map
\begin{align*}
B^{2K}\times B^{2(2K-p_n)} &\rightarrow S^2\\
(\widetilde{\mathbf{a}},\mathbf{b}) &\mapsto \left(gl_{\psi(\widetilde{\mathbf{a}}),\mathbf{b}}(\bar{f})\circ\iota_{\psi(\widetilde{\mathbf{a}}),\mathbf{b}}\right)^{-1}(Q)
\end{align*}
is $C^1$ SS. We also clearly get $C^\infty$ differentiability from varying $\bar{f}$. This proves (\ref{c13}).\\

\noindent\textbf{Proof of (\ref{c14})--Smoothness of U and structural maps}: The domain of our charts, $U$, is obtained from $\widehat{U}$ by making $\widehat{U}$ $\Gamma$-invariant and imposing slicing conditions (as in (\ref{slice})). We need to show that $U$ is a $C^1$ SS submanifold of $\widehat{U}$. We have shown that $\Gamma$ acts $C^1$ SS on $\widehat{U}$, so $\widehat{U}$ can be assumed to be $\Gamma$-invariant by taking an intersection of the finite orbit of $\widehat{U}$ under $\Gamma$.

We have shown in (\ref{c13}) that the marked points vary $C^1$ SS. By \cite[Proposition 4.1.4]{mwfund} the slicing conditions are transverse. Therefore, we conclude that $U$ is a $C^1$ SS submanifold of $\widehat{U}$. It is also easily seen that $\Gamma$ acts $C^1$ SS on $U$.

Finally, to complete the proof of (\ref{c14}), we consider coordinate changes. Section \ref{chartsincoords} gives an explicit description of the coordinate changes in coordinates. The coordinate change maps have the same form as the isotropy group action and hence are $C^1$ SS by the same reasoning. This completes the proof of (\ref{c14}) and hence Theorem \ref{c1atlasthm}.
\end{proof}

We will now show that Theorem \ref{c1atlasthm} and Proposition \ref{dmlemma} are compatible and prove Proposition \ref{dm1}. Proposition \ref{dm1} is implied by the stronger Lemma \ref{forgetlemma} below. In fact, the domain $U$ of a basic chart carries \textit{two} important forgetful maps that are useful in applications. The first is the forgetful map $\pi_1:U\rightarrow\overline{\mathcal{M}}_{0,k+L}^{new}$ that takes an element $(\vec{e},\mathbf{a},\mathbf{b},\vec{\omega},\vec{\zeta},f)$ to the underlying domain of $f$ with all marked points $\mathbf{w},\mathbf{z}$ (this does not collapse any spheres because the extra $L$ points were chosen to stabilize the domain). The other forgetful map that is useful is $\pi_0:U\rightarrow\overline{\mathcal{M}}_{0,k}^{new}$ which takes an element $(\vec{e},\mathbf{a},\mathbf{b},\vec{\omega},\vec{\zeta},f)$ to the stabilization of the domain $f$ with only marked points $\mathbf{z}$. With the modification to the charts for $U$ made by (\ref{gluingcharts}) it is no longer clear that $\pi_0$ and $\pi_1$ are smooth maps. The following lemma verifies that they are.

\begin{lemma}\label{forgetlemma}
Let $U$ be a Gromov-Witten Kuranishi chart with $C^1$ SS structure given by Theorem \ref{c1atlasthm}. Let $\mathcal{M}_{0,m}^{new}$ denote genus zero Deligne-Mumford space with the modified smooth structure given by Proposition \ref{dmlemma}. Then
\begin{align*}
\pi_1:U&\rightarrow\overline{\mathcal{M}}_{0,k+L}^{new}\\
\pi_0:U&\rightarrow\overline{\mathcal{M}}_{0,k}^{new}
\end{align*}
are $C^1$ SS.
\end{lemma}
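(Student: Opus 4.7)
The plan is to first reduce the statement for $\pi_0$ to that for $\pi_1$. By definition $\pi_0$ stabilizes after forgetting the $\mathbf{w}$ points, so it factors as $\pi_0 = \mathrm{fgt}\circ\pi_1$, where $\mathrm{fgt}:\overline{\mathcal{M}}_{0,k+L}^{new}\to\overline{\mathcal{M}}_{0,k}^{new}$ is the forgetful map. By Remark \ref{smoothforget} this forgetful map is $C^1$ SS (indeed a $C^1$ SS projection). Since compositions of $C^1$ SS maps are $C^1$ SS, it therefore suffices to prove that $\pi_1$ is $C^1$ SS.

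For $\pi_1$, the plan is to compute the map in the local coordinates supplied by the chart (\ref{gluingcharts}) on $U$. In those coordinates, an element is $(\vec{e},\widetilde{\mathbf{a}},\mathbf{b},\vec{\omega},\vec{\zeta},\bar{f})$ and corresponds under $\Phi$ to the stable map whose domain is $\Sigma_{\mathbf{P},\psi(\widetilde{\mathbf{a}}),\mathbf{b}}$ with marked points $\mathbf{w},\mathbf{z}$ determined by $\vec{\omega},\vec{\zeta}$ together with the points fixed by the normalization $\mathbf{P}$. Since $\pi_1$ merely records the underlying pointed nodal Riemann surface, it factors as the smooth projection
\[
(\vec{e},\widetilde{\mathbf{a}},\mathbf{b},\vec{\omega},\vec{\zeta},\bar{f})\;\longmapsto\;(\widetilde{\mathbf{a}},\mathbf{b},\vec{\omega},\vec{\zeta})
\]
followed by the map
\[
(\widetilde{\mathbf{a}},\mathbf{b},\vec{\omega},\vec{\zeta})\;\longmapsto\;[\Sigma_{\mathbf{P},\psi(\widetilde{\mathbf{a}}),\mathbf{b}},\mathbf{w},\mathbf{z}]\in\overline{\mathcal{M}}_{0,k+L}^{new}.
\]
It is the second map that carries all the content.

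The key observation is that this second map is, by construction, one of the reparametrized cross ratio charts built in Proposition \ref{dmlemma}. As noted in Section \ref{chartsincoords}, the parameters $(\mathbf{a},\mathbf{b})$ were chosen to be cross ratio coordinates on $\overline{\mathcal{M}}_{0,p_w+p_z}$, with the $\mathbf{a}$ being precisely the gluing cross ratios that vanish on the nodal strata of $\mathbf{P}$ and $\mathbf{b}$ being cross ratios of $\mathbf{b}$-type; the remaining parameters $\vec{\omega},\vec{\zeta}$ similarly describe the positions of the non-fixed marked points and correspond to $\mathbf{b}$-type cross ratios on $\overline{\mathcal{M}}_{0,k+L}$. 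The $C^1$ SS structure on $\overline{\mathcal{M}}_{0,k+L}^{new}$ is, by definition, given by precomposing such cross ratio charts with $\Psi$, the reparametrization that applies $\psi$ to the gluing variables alone. This is exactly the form of the map above. Hence in these local coordinates $\pi_1$ is, up to a $C^1$ SS change of coordinates on $\overline{\mathcal{M}}_{0,k+L}^{new}$, the identity on $(\widetilde{\mathbf{a}},\mathbf{b},\vec{\omega},\vec{\zeta})$ composed with the projection that drops $\vec{e}$ and $\bar{f}$, which is plainly $C^1$ SS.

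The main thing to keep track of is that the $(\mathbf{a},\mathbf{b},\vec{\omega},\vec{\zeta})$ used to parametrize $U$ need not be literally the standard cross ratio coordinates used in Proposition \ref{dmlemma} at the target; they may differ by a smooth change of coordinates on $\overline{\mathcal{M}}_{0,k+L}$. This is precisely the kind of transition handled in the proof of Proposition \ref{dmlemma}: any two such cross ratio parametrizations agree after conjugation by $\Psi$ up to a $C^1$ SS diffeomorphism, so composing with the chart on $U$ preserves $C^1$ SS regularity. Assembling these facts gives $\pi_1\in C^1\mathrm{SS}$, and then $\pi_0 = \mathrm{fgt}\circ\pi_1$ is $C^1$ SS as well.
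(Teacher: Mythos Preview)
Your proof is correct and follows essentially the same approach as the paper's own proof: factor $\pi_0$ through $\pi_1$ via the forgetful map (which is $C^1$ SS by Remark \ref{smoothforget}), and check $\pi_1$ is $C^1$ SS by examining it in the local coordinates of (\ref{gluingcharts}). The paper's proof is a terse two-sentence version of what you wrote; you have simply filled in the details of ``examining the map in local coordinates.''
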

\begin{proof}
Given Theorem \ref{c1atlasthm} and Proposition \ref{dmlemma}, the proof of Lemma \ref{forgetlemma} is simple. The map $\pi_1$ is easily seen to be $C^1$ SS by examining the map in local coordinates. It is also easily seen that $\pi_0$ is $C^1$ SS because it is the composition of $\pi_1$ with that forgetful map $\overline{\mathcal{M}}_{k+L}^{new}\rightarrow\overline{\mathcal{M}}_{k}^{new}$. The way the smooth structure on $\overline{\mathcal{M}}_{0,m}^{new}$ was defined in Proposition \ref{dmlemma}, it is clear that the forgetful map $\overline{\mathcal{M}}_{0,m}^{new}\rightarrow\overline{\mathcal{M}}_{0,m-1}^{new}$ is $C^1$ SS, c.f. Remark \ref{smoothforget}.
\end{proof}

\begin{remark}\label{gluingprofile}
The gluing maps (\ref{gluingcharts}) are used to define a $C^1$ SS structure on the domain $U$ (and the corresponding structural maps). In some literature this is referred to as a choice of \emph{gluing profile}. In $(\ref{gluingcharts})$ a gluing profile of $\psi:x\mapsto x|x|^{p-1}$, where $p>2$, is used. Other authors have different choices\footnote{\cite[Corollary 10.9.3]{JHOL} is slightly incorrect in their choice of a gluing profile. In their notation, $\beta(s)$ should be chosen to be $\beta(s) = s^\nu, \nu>2$, in order to achieve $C^1$ differentiability.}. We do not claim that the choice of $(\ref{reparam})$ is canonical.\hfill$\Diamond$
\end{remark}

\section{$C^1$ gluing}\label{c1gluing}

The goal of this section is to prove a ``$C^1$ gluing theorem" for two $J$-holomorphic curves.

\subsection{Overview and statement of main results}\label{gluingoverview}
Given a regular almost complex structure $J$, the gluing map $\iota_k^{\delta,R}$ is defined for two genus zero $J$-holomorphic curves $(u^0,u^\infty)$ such that $u^{0}(0) = u^{\infty}(0)$ and
\[\|du^0\|_{L^\infty}\leq k,\qquad \|du^{\infty}\|_{L^\infty}\leq k,\]
where $k$ is some constant. (The notation $\iota_k^{\delta,R}$ is chosen to agree with \cite{JHOL}, but we also use $gl$ to denote the gluing map in Section \ref{c1sscharts} and later in this section.) The gluing of these two curves, $\iota_k^{\delta,R}(u^0,u^\infty)$, is a $J$-holomorphic curve that converges in the Gromov topology to $(u^0,u^\infty)$ as $R\rightarrow\infty$. It is defined for $0<\delta<\delta_0$ and $R>\frac{1}{\delta_0\delta}$, where $\delta_0$ is a small constant depending only on $k$. It is described in \cite{JHOL} the various conditions that $\delta_0$ must satisfy. We will think of fixing $k$ and a sufficiently small $\delta$ and varying $R$. This section studies the derivative of the map $\iota_k^{\delta,R}$ with respect to $R$. Before stating the main results, we introduce some more notation.

We will consider $u^0, u^\infty$ as having fixed domains $S_0,S_{\infty}$ and identify $S_i$, $i\in\{0,\infty\}$, with $S_i = S^2 = \mathbb{C}\cup\{\infty\}$. The domain of the glued curve $\iota_k^{\delta,R}(u^0,u^\infty)$, denoted by $\Sigma_{R,\delta}$ (or just $\Sigma_R$), is also diffeomorphic to $S^2$, but rather than being identified with $\mathbb{C}\cup\{\infty\}$, has two charts
\begin{equation}\label{twocharts}
T_i := \left\{|z| > \frac{2\delta}{R}\right\} \subset S_i.
\end{equation}
These charts are identified over the regions
\begin{equation}\label{gluingregions}
\left\{\frac{2\delta}{R} < |z| < \frac{1}{2\delta R}\right\} \subset T_0, \left\{\frac{2\delta}{R} < |z| < \frac{1}{2\delta R}\right\} \subset T_{\infty}
\end{equation}
via the map
\begin{align}\label{phineck}
\phi_R: T_0 &\rightarrow T_\infty\\
\nonumber z &\mapsto \frac{1}{R^2z}.
\end{align}
Therefore, we have
\begin{align}
\label{sigmar}\Sigma_R &= T_0 \cup_{\phi} T_\infty\\
\nonumber &= \left\{|z| > \frac{2\delta}{R}\right\} \bigcup_{\phi} \left\{|z| > \frac{2\delta}{R}\right\}.
\end{align}
We will use the following terminology for regions in $\Sigma_R$: The region
\[\left\{\frac{2\delta}{R} < |z| < \frac{1}{2\delta R}\right\} \subset T_0, T_\infty \subset \Sigma_R\]
on which $T_0, T_\infty$ are identified is called the \textbf{inner neck} of $\Sigma_R$. The region
\[\left\{z \left| z\in T_0, \frac{2\delta}{R} < |z| < \frac{1}{\delta R}\right.\right\}\bigcup_{\phi_R}\left\{z \left| z\in T_0, \frac{2\delta}{R} < |z| < \frac{1}{\delta R}\right.\right\}\]
is called the \textbf{neck}. The complement of the inner neck in the neck is called the \textbf{outer neck}. We will sometimes abuse terminology and also use outer neck to refer to $\left\{|z| > \frac{1}{2\delta R}\right\}$, i.e. the complement of the inner neck in all of $T_0$ or $T_\infty$.

\begin{remark}
This is a different setup than that of \cite{JHOL}. For every $R$, they identify the domain of their glued curve with $\mathbb{C}$ by an $R$-dependent identification that involves rescaling the $S_\infty$ factor. This has the advantage of simplifying notation and enables one to work on a fixed domain $\mathbb{C}$. Additionally, they make their norms symmetric in $u^0, u^\infty$ so all estimates they do are done on just one half. However, as pointed out by Kenji Fukaya, taking an $R$ derivative in their setting is not symmetric. Our construction is completely symmetric with respect to $u^0,u^\infty$; this makes the notation more complicated, but the estimates much easier. Although our setup is different from \cite{JHOL}, it is easy to see that their estimates continue to hold in our setting as well. \hfill $\Diamond$
\end{remark}

We can then restrict the glued map $\iota^{\delta,R}(u^0,u^\infty)$ to each chart $T_i$ to obtain
\begin{align*}
\iota^{\delta,R}_i(u^0,u^\infty): T_i \rightarrow M, \qquad i=0,\infty.
\end{align*} (We will use subscript $``i"$ to denote the restriction to the chart $T_i$ throughout this section.) The main result of this section is a bound on the $R$ derivative of the glued map in each chart.

\begin{theorem}\label{mainthm}
Under the hypotheses above, let $\iota^{\delta,R}$ denote the gluing map, $\iota^{\delta,R}_i$ its restriction to the chart $T_i$, $p>2$, and $(u^0,u^\infty)$ be a fixed pair of $J$-holomorphic spheres meeting at a point. Then for $R$ sufficiently large,
\[\left\|\left.\frac{d}{dR}\right|_{R=R_0}\iota^{\delta,R}_i(u^0,u^\infty)\right\|_{W^{1,p}} \leq C\frac{1}{R_0}\frac{1}{(\delta R_0)^{2/p}}\]
where $C$ is a constant that depends only on $p,\omega,k,J$ and $\|\cdot\|_{W^{1,p}}$ denotes an appropriate $W^{1,p}$ norm in the chart $T_i$ (depending on $R$); this norm is described in Section \ref{nsm}.
\end{theorem}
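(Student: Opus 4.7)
My plan is to decompose the glued map in the standard way as an approximate solution plus a correction, $\iota^{\delta,R}(u^0,u^\infty) = u^R + \xi^R$, and then differentiate each piece with respect to $R$. Here $u^R$ is the preglued map obtained by patching $u^0$ and $u^\infty \circ \phi_R$ together on the neck via a cutoff function $\beta_R$ whose transition happens in the annular region $\{\tfrac{1}{2\delta R} < |z| < \tfrac{1}{\delta R}\}\subset T_i$, and $\xi^R$ is the unique small correction (say, $L^2$-orthogonal to $\ker D_{u^R}$) produced by the implicit function theorem / Newton iteration applied to the rescaled $\bar\partial_J$ equation.

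First I would estimate $\partial_R u^R$ directly. Outside the neck, $u^R$ is independent of $R$ (it equals $u^i$ on the body of the $i$th chart), so $\partial_R u^R$ is supported on the outer neck region whose area in $T_i$ is of order $(\delta R_0)^{-2}$. On this region $\partial_R u^R$ has two contributions: the derivative of the cutoff $\partial_R \beta_R$ times the difference $u^0(z) - u^\infty(\phi_R(z))$ (both of which are of size $|z|\cdot \|du\|_{L^\infty}$ near the node and hence of order $(\delta R_0)^{-1}$), and the $R$-dependence of $\phi_R(z)=(R^2 z)^{-1}$ itself, which contributes $\tfrac{2}{R^3 z}\cdot du^\infty$ multiplied by the cutoff. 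Both types of terms have pointwise size of order $1/R_0$, and when raised to the $p$th power and integrated over an area of $(\delta R_0)^{-2}$ yield the bound $\|\partial_R u^R\|_{W^{1,p}} \lesssim \tfrac{1}{R_0}(\delta R_0)^{-2/p}$. A parallel calculation handles the first derivative in the $W^{1,p}$ norm, using that derivatives of $\beta_R$ in $z$ have size $\delta R_0$ supported on the transition annulus.

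Next I would handle $\partial_R \xi^R$ via the implicit function theorem with parameter. The correction $\xi^R = Q_R(u^R)\bigl(\bar\partial_J u^R + \mathcal{N}_R(\xi^R)\bigr)$ for a uniformly bounded right inverse $Q_R$ of the linearization and a quadratic nonlinearity $\mathcal{N}_R$; differentiating in $R$ produces
\begin{equation*}
\partial_R \xi^R = (\partial_R Q_R)(\bar\partial_J u^R + \mathcal{N}_R(\xi^R)) + Q_R\bigl(\partial_R \bar\partial_J u^R + (\partial_R \mathcal{N}_R)(\xi^R) + \mathcal{N}_R'(\xi^R)\partial_R\xi^R\bigr).
\end{equation*}
Rearranging and using the contraction estimates from the usual gluing argument (which give $\|\xi^R\|=O(R_0^{-1-2/p})$ and $\|\mathcal{N}_R'(\xi^R)\| \ll 1$), the task reduces to controlling $\|\partial_R \bar\partial_J u^R\|_{L^p}$ and $\|\partial_R Q_R\|$ with the same decay rate. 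The first is estimated exactly as for $\partial_R u^R$ above, since $\bar\partial_J u^R$ is supported on the neck and differs from $\partial_R u^R$ by one antiholomorphic derivative, giving the same $R_0^{-1}(\delta R_0)^{-2/p}$ bound.

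The main obstacle will be the last ingredient: controlling the $R$-derivative of the approximate right inverse $Q_R$ in the appropriate operator norm. The construction of $Q_R$ in \cite{JHOL} breaks up the target one-form into pieces supported on the two halves and solves on each side; because the cutoffs and the domain $\Sigma_R$ itself vary with $R$, differentiating $Q_R$ produces terms in which the cutoffs have moved, and these new error terms must be reabsorbed using the same kinds of estimates (weighted $L^p$ bounds on the neck, together with the spectral gap of $D_{u^0}\oplus D_{u^\infty}$ away from the neck). The strategy will be to write $\partial_R Q_R = -Q_R (\partial_R D_{u^R}) Q_R + (\text{cutoff correction})$, bound the cutoff correction using the neck size and the $L^\infty$ control on $du^R$, and verify that the net operator norm is of order $R_0^{-1}$; combining this with the first-paragraph estimate on $\partial_R u^R$ then closes the bound. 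Since Corollary \ref{corc1easy} only needs the special case of two regular spheres meeting at a node, throughout the argument I may fix $u^0, u^\infty$ and use the regularity hypothesis to guarantee the uniform bound on $\|Q_R\|$ needed above.
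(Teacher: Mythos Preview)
Your overall strategy---decompose as pregluing plus IFT correction and differentiate each in $R$---is the same as the paper's, and the estimates you outline for $\partial_R u^R$ and $\partial_R\bar\partial_J u^R$ are essentially those of Lemma~\ref{w1pur} and Step~1/Step~3 of Lemma~\ref{mainlemma}. Two implementation differences are worth flagging, plus one slip.

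First, you work in the asymmetric setup where the pregluing in chart $T_0$ involves $u^\infty\circ\phi_R$, so that $\partial_R\phi_R$ appears explicitly. The paper deliberately avoids this: its pregluing in $T_0$ depends only on $u^0$ and the cutoff, with $u^\infty$ appearing only in $T_\infty$ (see~(\ref{pregluing}) and the remark following~(\ref{phineck})). This symmetric framing removes the $\partial_R\phi_R$ terms entirely and, as the paper notes (crediting Fukaya), makes the $R$-derivative estimates cleaner.

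Second, rather than differentiating the fixed-point equation for $\xi^R$ directly, the paper proves a general contraction-mapping difference lemma (Lemma~\ref{fpderivativebound}): if $\|\phi_t(x_0^*)-\phi_0(x_0^*)\|\le Kt$ for a family of contractions, then $\|x_t^*-x_0^*\|\le 2Kt$. This reduces the $\partial_R\xi^R$ estimate to bounding a single difference $\frac{d}{dt}|_{t=0}\bigl(-Q_tf_t(\xi_0^*)+Q_t d_0f_t(\xi_0^*)\bigr)$, and sidesteps the need to rearrange terms and absorb $\mathcal{N}_R'(\xi^R)\partial_R\xi^R$. For the $\partial_R Q_R$ piece your formula $-Q_R(\partial_R D_{u^R})Q_R$ needs care since $Q_R$ is only a right inverse; the paper instead appeals to \cite[Lemma~10.6.2]{JHOL} for the required bound on the variation of $Q$ along the path of preglued maps (Step~2 of Lemma~\ref{mainlemma}).

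Finally, a minor slip: the standard IFT bound gives $\|\xi^R\|_{W^{1,p}}=O\bigl((\delta R_0)^{-2/p}\bigr)$, not $O(R_0^{-1-2/p})$ (see~(\ref{sizeofxi})). This is still enough for $\|\mathcal{N}_R'(\xi^R)\|\ll 1$, so your absorption step survives.
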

\noindent The same result immediately follows for complex gluing parameters (in which case the regions $T_i$ from (\ref{twocharts}) are identified by a map $\phi$ that depends on $R$ and also twists by an additional angle parameter $\theta$).

The primary use of Theorem \ref{mainthm} is Corollary \ref{corc1easy}. Before stating the corollary we need some preliminaries, including translating between the language of \cite{JHOL}, the language of this section, and the language of Section \ref{gwcharts}. In the analysis of the gluing theorem, one wants to work with a fixed domain as $R$ varies. This section will focus on analyzing the simple case of two curves meeting at a node to better illuminate the main ideas. However, we will now describe a more general language for doing that is also used in Section \ref{gwcharts}.


Let $T$ be the stable tree with two components and two marked points on each component. Then we can think of $(u^0,u^\infty)$ as an element of the moduli space $\mathcal{M}_{0,T}(A^{0,\infty};J)$ of $J$-holomorphic maps with four marked points, modelled over $T$ and representing the homology classes $A^0$ and $A^\infty$. An equivalence class in $\mathcal{M}_{0,T}$ has a unique representative $[\mathbf{u},\mathbf{z}]$ such that
\[z^0=0,\quad z_1^0=1,\quad z_2^0=\infty,\quad z^\infty = 0,\quad z_3^\infty=e^{i\theta},\quad z_4^\infty = \infty\]
where $z_1^0,z_2^0$ are the marked points on one node, $z_3^\infty,z_4^\infty$ are the marked points on the other node, and $z^0,z^\infty$ are the nodal points. Let $\Sigma_0$ denote the parameterized domain of $[\mathbf{u},\mathbf{z}]$.

Define the element $[gl_\lambda(\mathbf{u}),\mathbf{z}_\lambda]\in\mathcal{M}_{0,4}(A;J)$ by
\[gl_\lambda(\mathbf{u}):=\iota_k^{\delta,R}(u^0,u^\infty),\qquad \mathbf{z}_\lambda:=(1,\infty,\lambda, 0)\]
where $\lambda = \frac{e^{i\theta}}{R^2}$. Define $[gl_0(\mathbf{u}),\mathbf{z}_0] = [\mathbf{u},\mathbf{z}]\in\overline{\mathcal{M}}_{0,4}(A;J)$. With this, the map
\begin{align*}
B_\rho\times\mathcal{M}_{0,T} &\rightarrow \overline{\mathcal{M}}_{0,4}(A;J)\\
(\lambda,[\mathbf{u},\mathbf{z}]) &\mapsto [gl_\lambda(\mathbf{u}),\mathbf{z}_\lambda]=\iota_k^\lambda([\mathbf{u},\mathbf{z}])
\end{align*}
is well defined for $\rho<(\delta\delta_0)^2$. The \textbf{gluing parameter}
\[\lambda = w(z_4,z_1,z_2,z_3)\]
is exactly the cross ratio of the marked points of the element $[gl_\lambda(\mathbf{u}),\mathbf{z}_\lambda]$ in the image of the gluing map. Let $\Sigma_\lambda$ denote the domain of $[gl_\lambda(\mathbf{u}),\mathbf{z}_\lambda]$.

Let $\Delta$ denote a neighborhood of $\Sigma_0$ in $\overline{\mathcal{M}}_{0,4}$ and $\mathcal{C}|_{\Delta}$ denote the universal curve over $\Delta$. We can identify the domains $\Sigma_\lambda\setminus \mathcal{N}(nodes)$ with a \textit{fixed} open subset $\Sigma_0\setminus \mathcal{N}(nodes)$ of $\Sigma_0$. We will denote this identification by a map $\kappa$ (compare with (\ref{iota})).
\begin{align}
\label{kappa}\kappa &: (\Sigma_0\setminus \mathcal{N}(nodes)) \times B \rightarrow \mathcal{C}|_{\Delta}\\
\nonumber\kappa_{\lambda} &: (\Sigma_0\setminus \mathcal{N}(nodes))  \times \{\lambda\} \mapsto \Sigma_{\lambda}\setminus\mathcal{N}(nodes).\nonumber
\end{align}
Here $B\subset \mathbb{C}$ denotes a neighborhood of $0$.

In the language of \cite{JHOL}, $\lambda=\frac{e^{i\theta}}{R^2}$ and $\Sigma_0=S^2\vee S^2 = (\mathbb{C}\cup\{\infty\})\vee (\mathbb{C}\cup\{\infty\})$. In \cite{JHOL}, $\kappa_{\lambda}$ involves a rescaling, but in our formulation $\kappa_{\lambda}$ identifies $\Sigma_0\setminus \mathcal{N}(nodes)$ with $\Sigma_R \setminus \mathcal{N}(nodes)$ by the charts $T_i$ for $\Sigma_R$ as described above.

This identification allows us to work on a \textit{fixed} domain $\Sigma_0\setminus \mathcal{N}(nodes)$ and we can consider the map
\begin{align}\label{glmap}
B &\rightarrow W^{1,p}(\Sigma_0\setminus \mathcal{N}(nodes),M)\\
\lambda &\mapsto gl_\lambda(\mathbf{u})\circ\kappa_\lambda.
\end{align}
The main result is that after an appropriate reparametrization, $(\ref{lambdareparam})$, this map is $C^1$.

\begin{corollary}\label{corc1easy}
Under the hypotheses of Theorem \ref{mainthm}, let $p>2$, $B\subset \mathbb{C}$ denote a small neighborhood of $0$, and for $\lambda\in B$ define
\begin{equation}\label{lambdareparam}
\widetilde{\lambda} = |\lambda|^{\frac{1}{p}-1}\lambda.
\end{equation}
Let $\mathbf{u}:\Sigma_0\rightarrow M$ be a fixed parameterized nodal curve and $gl_\lambda(\mathbf{u}):\Sigma_\lambda\rightarrow M$ its gluing with gluing parameter $\lambda$. Let $\kappa$ denote the map that identifies $(\Sigma_0\setminus \mathcal{N}(nodes))$  and the domains $\Sigma_{\lambda}\setminus\mathcal{N}(nodes)$ as in (\ref{kappa}).
Then the map
\begin{align}
\noindent B &\rightarrow W^{1,p}(\Sigma_0\setminus \mathcal{N}(nodes),M)\\
\label{glkmap}\widetilde{\lambda} &\mapsto gl_{\lambda}(\mathbf{u})\circ\kappa_{\lambda} = gl_{|\widetilde{\lambda}|^{p-1}\widetilde{\lambda}}(\mathbf{u})\circ\kappa_{|\widetilde{\lambda}|^{p-1}\widetilde{\lambda}}
\end{align}
is $C^1$.
\end{corollary}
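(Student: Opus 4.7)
The plan is to split the regularity question into two parts: smoothness of $F(\widetilde{\lambda}) := gl_{\lambda(\widetilde{\lambda})}(\mathbf{u})\circ\kappa_{\lambda(\widetilde{\lambda})}$ for $\widetilde{\lambda}\neq 0$, and $C^1$-regularity at $\widetilde{\lambda}=0$. Away from the origin, the reparametrization $\lambda = |\widetilde{\lambda}|^{p-1}\widetilde{\lambda}$ is a smooth diffeomorphism of $B\setminus\{0\}$, and by the standard smooth dependence of the gluing map on a nonzero gluing parameter (see \cite[Proposition~10.5.4]{JHOL}) the map $\lambda \mapsto gl_\lambda(\mathbf{u})\circ\kappa_\lambda$ is smooth into $W^{1,p}$. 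Composing yields smoothness of $F$ on $B\setminus\{0\}$.

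The essential content is $C^1$-regularity at the origin. The strategy is to work in polar coordinates $\widetilde{\lambda}=\rho e^{i\theta}$, so that $\lambda = \rho^p e^{i\theta}$ and $R = |\lambda|^{-1/2} = \rho^{-p/2}$, and to apply Theorem \ref{mainthm} through the chain rule. The radial bound becomes
\[
\|\partial_\rho F\|_{W^{1,p}} \le \|\partial_R F\|_{W^{1,p}}\cdot\bigg|\frac{dR}{d\rho}\bigg| \le CR^{-1-2/p}\cdot\tfrac{p}{2}\rho^{-p/2-1} = C',
\]
bounded uniformly in $(\rho,\theta)$, and the analogous estimate for the complex extension of Theorem \ref{mainthm} gives $\|\partial_\theta F\|_{W^{1,p}} \le C''\rho$. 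These exponents match exactly because the reparametrization $\psi(\widetilde{\lambda})=|\widetilde{\lambda}|^{p-1}\widetilde{\lambda}$ was designed to cancel the blow-up $\|\partial_\lambda gl\|\sim|\lambda|^{1/p-1}$ against $|\partial_{\widetilde{\lambda}}\lambda|\sim|\widetilde{\lambda}|^{p-1}$. Integrating the radial bound along rays already yields the Lipschitz estimate $\|F(\widetilde{\lambda})-F(0)\|_{W^{1,p}}\le C|\widetilde{\lambda}|$, so $F$ extends continuously to the origin.

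To upgrade continuity to $C^1$ at $\widetilde{\lambda}=0$, the next step is to identify a candidate $\mathbb{R}$-linear differential $L\colon\mathbb{C}\to W^{1,p}$ at the origin from the leading-order asymptotic behavior of $\partial_R F$ supplied by the detailed gluing analysis of Section \ref{c1gluing}, and to verify both the Frechet expansion $F(\widetilde{\lambda})=F(0)+L(\widetilde{\lambda})+o(|\widetilde{\lambda}|)$ and the convergence $DF(\widetilde{\lambda})\to L$ in the operator norm on $L(\mathbb{C},W^{1,p})$ as $\widetilde{\lambda}\to 0$. The hard part will be this last convergence, uniformly in the angular direction $\theta$: the angular parameter $\theta$ enters the gluing nontrivially through the twisted identification of the two charts, so the partial derivatives $\partial_x F,\partial_y F$ at $\rho e^{i\theta}$ genuinely depend on $\theta$ even after rescaling, and one must show that these angular-dependent contributions collapse into a single limit as $\rho\to 0$. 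This requires an asymptotic expansion of $\partial_R F$ rather than just the bound in Theorem \ref{mainthm}. With such an asymptotic in hand, applying the fundamental theorem of calculus along radial rays yields Frechet differentiability at $0$, and together with the already-established smoothness of $F$ on $B\setminus\{0\}$ this gives the full $C^1$ conclusion.
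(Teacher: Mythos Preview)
Your analysis through the Lipschitz estimate is correct and matches the paper, but your plan for the $C^1$ upgrade at the origin is both incomplete and overcomplicated. You propose to identify a candidate differential $L$ from a leading-order asymptotic of $\partial_R F$ and then prove $DF(\widetilde\lambda)\to L$ uniformly in $\theta$, while admitting that Theorem~\ref{mainthm} does not supply such an asymptotic. This is the wrong shape of argument.

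The paper avoids all of this by arranging that the derivative at the origin is \emph{zero}. The mechanism is the elementary calculus lemma: if $h$ is continuous at $0$ and smooth with $\|h'\|\le C$ away from $0$, then for any $\varepsilon>0$ the map $t\mapsto h(t^{1+\varepsilon})$ is $C^1$ at $0$ with vanishing derivative, since the mean value inequality gives $\|h(t^{1+\varepsilon})-h(0)\|\le C t^{1+\varepsilon}=o(t)$ and for $t\neq 0$ one has $\|(h(t^{1+\varepsilon}))'\|\le C(1+\varepsilon)t^{\varepsilon}\to 0$. No asymptotic expansion is needed, and the angular dependence you worry about disappears because all directional derivatives vanish. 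Your own chain-rule computation, showing $\|\partial_\rho F\|$ is bounded, is exactly the input to this lemma with $s=R^{-2/p}$.

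There is one genuine subtlety you have implicitly detected: with the reparametrization exponent equal to the Sobolev exponent $p$, the chain rule gives only a \emph{bounded} derivative, not one tending to zero, so the calculus lemma requires an extra $\varepsilon>0$ of reparametrization beyond that. The paper's proof is loosely written on this point. In the actual application (Proposition~\ref{corc1}) the target is the finite-dimensional space $W_N^0$, so one may measure in $W^{1,q}$ for some $2<q<p$, and then the reparametrization $|\widetilde\lambda|^{p-1}\widetilde\lambda$ does give $\|DF(\widetilde\lambda)\|\le C|\widetilde\lambda|^{p/q-1}\to 0$. Your instinct that ``more than the bound'' is needed was half right: what is missing is not a sharper asymptotic but a small gap between the two exponents.
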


The proof of Corollary \ref{corc1easy} is contained in Section \ref{corproof} and after appealing to Theorem \ref{mainthm}, it amounts to a calculus exercise.

\begin{remark}\label{extensions}[Extensions of $C^1$ gluing theorem]
Theorem \ref{mainthm} and Corollary \ref{corc1easy}, as stated, only prove differentiability of the gluing map with respect to the gluing parameter in the simplest case of two regular curves intersecting at one point. For applications, of course, one is interested in considering both nonregular curves (that is to say with some nonzero obstruction bundle) and curves with multiple nodes. We remark here that both of these situations can be dealt with by techniques standard in the literature. For curves with a nonvanishing obstruction bundle, $E\neq0$, one uses the graph construction of Gromov and considers the graph of the curve $S^2\rightarrow S^2\times M$ which will be a regular $J$-holomorphic curve for an appropriate $J$ (that incorporates $e\in E$). For more discussion of this, see \cite[Remark 4.1.2]{notes} and \cite[Chapter 6.2]{JHOL}.

For curves with multiple nodes, we note that estimates required for the construction of the gluing map as well as the estimates of Theorem \ref{mainthm} are local in nature and therefore one can construct a gluing map for a curve with multiple nodes by localizing around each node via more charts and a partition of unity. With multiple nodes, one can also generalize Corollary \ref{corc1easy} to arbitrary gluing parameters. The generalization needed for this thesis is stated as Proposition \ref{corc1}.
\hfill$\Diamond$
\end{remark}

Section \ref{construction} contains an overview of the construction of the gluing map and presents the basic strategy for proving Theorem \ref{mainthm}. Sections \ref{nsm} and \ref{asd} contain the analytic setup and prove Theorem \ref{mainthm} assuming some technical lemmas. The proofs of the technical lemmas are contained in Section \ref{derivativeestimates}. Corollary \ref{corc1easy} is proved in Section \ref{corproof}.
\subsection{Review of the construction of the gluing map}\label{construction}
Following \cite{JHOL}, the construction of the gluing map is as follows: First a pregluing $u^{R}(z)$ of $(u^0,u^\infty)$ is defined (its exact definition is given in (\ref{pregluing})). The pregluing $u^R(z)$ is not a $J$-holomorphic curve, but rather an approximate $J$-holomorphic curve. It is then proven that there is a vector field $\widetilde{\xi}^R$ along $u^R(z)$ such that $\exp_{u^R(z)}(\widetilde{\xi}^R)$ is a $J$-holomorphic curve. We then define
\[\iota^{\delta,R}(u^0,u^\infty)(z) := \exp_{u^R(z)}(\widetilde{\xi}^R).\] The existence of such a $\widetilde{\xi}^R$ is provided by a version of the implicit function theorem. We recall the specific version needed in Proposition \ref{ift}.
\begin{remark}[Choices in the gluing construction]\label{choices}
As mentioned above, we will think of $R$ as the variable of the gluing construction and fix a $\delta$ sufficiently small. There are many other choices that are made in the course of the gluing construction. The constants $p>2$ and $k>0$ only effect the choice of constant $\delta_0$, but do not effect the gluing map.

Together, the symplectic form $\omega$ and almost complex structure $J$ determine a metric $g$ from which we define our norms that will be used the construction. This metric in turn will determine the complement of Ker$D_{u}$ in which the image of the right inverse $Q_u$ will be defined (see Section \ref{nsm} and (\ref{qt})). The construction also relies on a choice of cutoff function $\rho$ in (\ref{rho}).

These are the only choices that are made and that is due to the fact that we are working in genus zero case. In the genus zero case, in contrast to the higher genus setting, one has canonical coordinates near the nodes. One would expect all results of this section to continue to hold in the higher genus setting, but would need to provide more details. \hfill $\Diamond$
\end{remark}
\begin{proposition}\cite[Proposition A.3.4]{JHOL}\label{ift}
Let $X$ and $Y$ be Banach spaces, and $f:X\rightarrow Y$ be a continuously differentiable map. We will let $d_0f:X\rightarrow Y$ denote the derivative of $f$ at $0\in X$. Suppose that the following conditions hold:
\begin{enumerate}[(a)]
\item $d_0f:X\rightarrow Y$ is surjective.
\item $d_0f$ has a (bounded linear) right inverse $Q: Y\rightarrow X$.
\end{enumerate}
Let $\delta,c$ be positive constants such that
\begin{itemize}
\item $\|Q\|\leq c$.
\item $\|x\|<\delta \quad\Longrightarrow\quad \|d_xf-d_0f\|\leq\frac{1}{2c}.$
\end{itemize}
Finally assume that
\begin{enumerate}[(c)]
\item $\|f(0)\|\leq\frac{\delta}{4c}$.
\end{enumerate}
Then there is a unique $x^*\in X$ such that
\[f(x^*) = 0,\quad x^*\in \operatorname{Im} Q,\quad \|x^*\|<\delta\]
and $x^*$ also satisfies
\[\|x^*\|\leq 2c\|f(0)\|.\]
\end{proposition}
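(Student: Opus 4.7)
The plan is to prove this by Newton iteration, realizing $x^*$ as the fixed point of a contraction on a suitable ball in $\operatorname{Im} Q$. Define the map
\[
T:\operatorname{Im} Q \cap \overline{B_\delta}(0) \longrightarrow X, \qquad T(x) := x - Qf(x).
\]
Observe that fixed points of $T$ automatically lie in $\operatorname{Im} Q$ and correspond exactly to zeros of $f$: if $Tx = x$ then $Qf(x) = 0$, and since $d_0f\circ Q = \operatorname{id}_Y$ the operator $Q$ is injective, so $f(x) = 0$. Conversely any zero of $f$ in $\operatorname{Im} Q$ is obviously a fixed point.

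The key step is to show $T$ is a $\tfrac{1}{2}$-contraction on $\operatorname{Im} Q\cap \overline{B_\delta}(0)$. Note that $d_0f\circ Q = \operatorname{id}_Y$ implies $Q\circ d_0 f = \operatorname{id}$ on $\operatorname{Im} Q$. Hence for $x,y\in \operatorname{Im} Q$, writing $h = x-y\in \operatorname{Im} Q$ and using the fundamental theorem of calculus,
\[
T(x) - T(y) = h - Q\bigl(f(x)-f(y)\bigr) = Q\!\left(d_0f(h) - \int_0^1 d_{y+th}f(h)\,dt\right).
\]
Combining $\|Q\|\le c$ with the hypothesis $\|d_xf - d_0f\|\le \tfrac{1}{2c}$ for $\|x\|<\delta$ yields $\|T(x)-T(y)\|\le \tfrac{1}{2}\|x-y\|$.

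Next I would check that the iterates starting at $x_0 = 0$ stay in the ball and converge. By construction $x_1 := T(0) = -Qf(0)$, so $\|x_1\|\le c\|f(0)\|\le \delta/4$. The contraction estimate gives $\|x_{n+1}-x_n\|\le 2^{-n}\|x_1\|$, hence
\[
\|x_n\| \le \sum_{j=0}^{n-1} 2^{-j}\|x_1\| \le 2c\|f(0)\| \le \tfrac{\delta}{2} < \delta,
\]
so the sequence remains in $\operatorname{Im} Q\cap \overline{B_\delta}(0)$ and is Cauchy. Its limit $x^*$ satisfies $T(x^*) = x^*$, i.e.\ $f(x^*) = 0$, with $x^*\in \operatorname{Im} Q$ and $\|x^*\|\le 2c\|f(0)\|$. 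Uniqueness inside $\operatorname{Im} Q\cap B_\delta(0)$ is immediate: any other solution $y$ gives $\|x^*-y\| = \|T(x^*)-T(y)\|\le \tfrac{1}{2}\|x^*-y\|$, forcing $x^* = y$.

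The only real obstacle is bookkeeping with the constants so that the ball of radius $\delta$ is stable under $T$ while simultaneously guaranteeing the final bound $\|x^*\|\le 2c\|f(0)\|$; this is precisely why the hypothesis $\|f(0)\|\le \delta/(4c)$ is stated in that form. Everything else is a routine application of the Banach fixed point theorem on the closed subspace $\operatorname{Im} Q$, which is where one must work in order to use the identity $Q\circ d_0f = \operatorname{id}$.
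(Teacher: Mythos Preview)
Your proof is correct and follows essentially the same approach as the paper: the remark after Proposition~\ref{ift} records that the result is proved by showing $-Qf(x) + Q(d_0f(x)) = x$ is equivalent to $f(x) = 0,\ x\in\operatorname{Im} Q$, and then applying the contraction mapping principle. Since $Q\circ d_0f$ restricts to the identity on $\operatorname{Im} Q$, the paper's contraction map $x\mapsto -Qf(x) + Q(d_0f(x))$ coincides with your $T(x) = x - Qf(x)$ there, so the two arguments are identical.
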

\begin{remark}[On the proof of Proposition \ref{ift}]Proposition \ref{ift} is proved using a Newton process. Specifically, it is shown that
\[f(x) = 0,\quad x\in \operatorname{Im} Q\qquad\Longleftrightarrow\qquad -Qf(x) + Q(d_0f(x))=x.\]
Then it is shown that $-Qf(x) + Q(d_0f(x))$ has a fixed point via the contraction mapping principle. \hfill$\Diamond$
\end{remark}
We will need the following general lemma about contraction mappings.
\begin{lemma}\label{fpderivativebound}
Suppose that $\phi_t:X\rightarrow X$ satisfies
\[\|\phi_t(x) -\phi_t(y)\| < \frac{1}{2}\|x-y\|\]
for all $x,y\in X$ and sufficiently small $t$. Let $x_t^*$ be the unique fixed point of $\phi_t$. Suppose also that
\begin{equation}\label{fixedpointder}
\|\phi_t(x_0^*) - \phi_0(x_0^*)\|\leq Kt.
\end{equation}
Then $\|x_t^*-x_0^*\|\leq 2Kt$.
Thus, if the map $t\mapsto x_t^*$ is known to be differentiable, then
\[\left\|\left.\frac{d}{dt}\right|_{t=0}x_t^*\right\| \leq 2K.\]
\end{lemma}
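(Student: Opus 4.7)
The plan is to exploit the fixed point identity $x_t^* = \phi_t(x_t^*)$ for both $t$ and $0$, then insert $\phi_t(x_0^*)$ as an intermediate term so that the triangle inequality splits $\|x_t^* - x_0^*\|$ into one piece controlled by the contraction hypothesis and one piece controlled by hypothesis \eqref{fixedpointder}. This is essentially the standard ``Lipschitz-continuous dependence of the fixed point on parameters'' argument for the Banach fixed point theorem.

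Concretely, I would first write
\[
\|x_t^* - x_0^*\| \;=\; \|\phi_t(x_t^*) - \phi_0(x_0^*)\| \;\leq\; \|\phi_t(x_t^*) - \phi_t(x_0^*)\| + \|\phi_t(x_0^*) - \phi_0(x_0^*)\|.
\]
Then the contraction hypothesis on $\phi_t$ bounds the first summand by $\tfrac{1}{2}\|x_t^*-x_0^*\|$, while the second summand is bounded by $Kt$ using \eqref{fixedpointder}. Absorbing the $\tfrac{1}{2}\|x_t^*-x_0^*\|$ term onto the left-hand side and multiplying by $2$ yields the claimed estimate $\|x_t^*-x_0^*\|\leq 2Kt$.

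For the derivative statement, assuming differentiability of $t\mapsto x_t^*$ at $t=0$, the inequality just established gives $\|x_t^* - x_0^*\|/t \leq 2K$ for all sufficiently small $t>0$. Passing to the limit $t\to 0$ and using that the norm is continuous then bounds $\bigl\|\tfrac{d}{dt}\big|_{t=0} x_t^*\bigr\|$ by $2K$, completing the lemma.

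There is really no obstacle in this argument: both hypotheses are tailored exactly to make the two-term triangle-inequality split work, and the only implicit point to note is that $t$ must be small enough for the contraction constant $\tfrac{1}{2}$ and the bound \eqref{fixedpointder} to hold simultaneously, which is explicitly allowed by the statement. The lemma is a lightweight abstract tool, to be applied later to the Newton process of Proposition \ref{ift} in order to differentiate the vector field $\widetilde{\xi}^R$ with respect to $R$.
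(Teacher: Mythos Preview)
Your argument is correct and is in fact the cleaner of the two. The paper takes a different route: rather than using the fixed-point identity $x_t^*=\phi_t(x_t^*)$ directly, it realizes $x_t^*$ as the limit of the Newton iterates $\phi_t^n(x_0^*)$, bounds the successive increments by $\|\phi_t^{n+1}(x_0^*)-\phi_t^n(x_0^*)\|\leq \tfrac{1}{2^n}Kt$ via the contraction hypothesis, and sums the resulting geometric series to obtain $2Kt$. Your triangle-inequality-and-absorb argument bypasses the iteration entirely and gets the same bound in one line; the paper's version has the minor advantage of making the Newton process (already invoked in Proposition~\ref{ift}) explicit, but analytically the two are equivalent.
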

\begin{proof}
The Newton process tells us that $x_t^* = \lim_{n\rightarrow\infty}\phi_t^n(x_0^*)$. First,
\[\|\phi_t(x_0^*) - x_0^*\| = \|\phi_t(x_0^*) - \phi_0(x_0^*)\| \leq Kt\]
using hypothesis (\ref{fixedpointder}). Then
\[\|\phi_t^2(x_0^*) - \phi_t(x_0^*)\| < \frac{1}{2}Kt\]
using the contraction property of $\phi_t$. Similarly,
\[\|\phi_t^{n+1}(x_0^*) - \phi_t^n(x_0^*)\| < \frac{1}{2^n}Kt.\]
Adding these inequalities over all $n$ we get the desired result.
\end{proof}
We now aim to estimate $\frac{d}{dR}\iota^{\delta,R}$. The rough strategy is to apply Proposition \ref{ift} to a family of maps and estimate their derivative using Lemma \ref{fpderivativebound}. Important to note is that Lemma \ref{fpderivativebound} applies to a family of maps on a \textit{fixed} Banach space. Describing the setup in which this is the case is delicate and is the subject of the next section.

\subsection{Norms, spaces, and maps}\label{nsm}
This section contains the appropriate setup of norms, spaces, and maps in order to describe the gluing map as a parameterized implicit function theorem problem on a fixed Banach space. We begin by fixing a regular $\omega$-compatible almost complex structure $J$. Together with $\omega$ this determines a metric on $M$. On $S^2 = \mathbb{C}\cup\{\infty\}$, we will use the round metric
\begin{equation}\label{roundmetric}
\frac{1}{(1+|z|^2)^2}(ds^2+dt^2).
\end{equation}
We now use this metric to define a metric on $\Sigma_R$, the domain of the glued curve (see (\ref{sigmar})). Recall from Section \ref{gluingoverview} that $\Sigma_R$ has two charts $T_0,T_\infty = \{|z| > \frac{2\delta}{R}\} \subset S^2$. These two charts were identified over a region called the \textit{inner neck}, $\{\frac{2\delta}{R} < |z| < \frac{1}{2\delta R}\}$, and this identification was via the map $\phi_R$ from (\ref{phineck}). The complement of the inner neck is called the outer neck.

Take a partition of unity $\{\psi_0, \psi_\infty\}$ subordinate to the cover $T_0, T_\infty$ of $\Sigma_R$ and that is symmetric with respect to the involution $\phi_R$. Put the round metric (\ref{roundmetric}) on each chart $T_0, T_\infty$ and then use the partition of unity $\{\psi_0, \psi_\infty\}$ to define a metric on $\Sigma_R$.

Therefore, this metric agrees with the round metric (\ref{roundmetric}) on the outer neck for each chart, $\left\{|z| > \frac{1}{2 \delta R}\right\} \subset T_i$ On the inner neck, the metric is defined by an interpolation of the metrics on each chart using the partition of unity. This metric induces $W^{k,p}$ norms on $\Sigma_R$; the restriction of this norm to a chart $T_i$ is the norm $||\cdot||_{W^{1,p}}$ in Theorem \ref{mainthm}.


In this section we will use $||\cdot||_{W^{k,p}}$ to denote the norm described above on $\Sigma_R$ or the charts $T_i$, where which one we are using will be clear from context. Note that these norms depend on $R$, but this will be suppressed in the notation as it will be clear to which $R$ we are referring.  The key property of this norm on $\Sigma_R$ is that it is symmetric with respect to the charts $T_0,T_\infty$. Therefore, henceforth we will prove estimates by examining one chart $T_0$ and consider $|z|\leq\frac{1}{R}$. Note that while the formulation is different, this norm is the same as that considered by \cite{JHOL}.

Next we define the pregluing $u^R(z)$ of the curves $u^0,u^\infty$. We will now fix the curves $u^0,u^\infty$; the independence of the results on the curves $u^0,u^\infty$ follows from the fact that the set of pairs $(u^0,u^\infty)$ satisfying
\[u^0(0)=u^\infty(0),\quad \|du^0\|_{L^\infty}\leq k,\quad \|u^\infty\|_{L^\infty}\leq k\]
is compact.

We will use the connection
\[\widetilde{\nabla}:=\nabla - \tfrac{1}{2}J(\nabla J)\]
on $(TM,J)$ and $(T^{*}M,J)$, where $\nabla$ is the Levi-Civita connection. The connection $\widetilde{\nabla}$ preserves $(0,1)$ forms and is used for reasons necessary later. However, $\widetilde{\nabla}$ has torsion.

Let $x_0:=u^0(0)=u^\infty(0)$. Define
\[\zeta^0(z),\zeta^\infty(z)\in T_{x_0}M\]
to be the unique tangent vectors of norm less than the injectivity radius of $M$ such that
\begin{equation}\label{zeta0def}
u^0(z) = \exp_{x_0}(\zeta^0(z)),\quad u^\infty(z) = \exp_{x_0}(\zeta^\infty(z))
\end{equation}
where the exponential map is defined using $\widetilde{\nabla}$.
Next fix a cutoff function $\rho:\mathbb{C}\rightarrow[0,1]$ such that
\begin{equation}\label{rho}
\rho(z) = \begin{cases}
0,& |z|\leq 1\\
1,& |z|\geq 2.
\end{cases}
\end{equation}
Then define the pregluing to be
\[u^R: \Sigma_R \rightarrow M\]
\begin{equation}\label{pregluing}
u^R(z):=\begin{cases}
u^{\infty}(z),& z\in\left\{\frac{2}{\delta R}\leq|z|\right\} \subset T_\infty\\
\exp_{x_0}\left(\rho(\delta Rz)\zeta^\infty(z)\right),& z\in\left\{\frac{1}{\delta R}\leq|z|\leq\frac{2}{\delta R}\right\} \subset T_\infty\\
x_0=u^\infty(0),& z\in\left\{\frac{2\delta}{R}\leq|z|\leq\frac{1}{\delta R}\right\} \subset T_\infty\\
x_0=u^0(0),& z\in\left\{\frac{2\delta}{R}\leq|z|\leq\frac{1}{\delta R}\right\} \subset T_0\\
\exp_{x_0}\left(\rho(\delta Rz)\zeta^0(z)\right),& z\in\left\{\frac{1}{\delta R}\leq|z|\leq\frac{2}{\delta R}\right\} \subset T_0\\
u^{0}(z),& z\in\left\{\frac{2}{\delta R}\leq|z|\right\} \subset T_0
\end{cases}\end{equation}
where $T_0,T_\infty$ are the charts for $\Sigma_R$ from (\ref{sigmar}). Again exponentiation is defined using $\widetilde{\nabla}.$\footnote{It is worth noting that \cite{JHOL} define their pregluing using the Levi-Civita connection. However, none of their estimates rely on the connection used being torsion free. The linearized $\bar{\partial}$ operator (as defined in (\ref{linearizeddelbar})) is defined using $\widetilde{\nabla}$, as are other aspect of our problem, so our choice seems the most natural.} Using the terminology of Section \ref{gluingoverview}, note that the pregluing is constant on the neck of $\Sigma_R$; in particular, it is constant on a neighborhood of the inner neck. The inner neck was also exactly where the $W^{1,p}$ norm on $\Sigma_R$ was defined using a partition of unity interpolation.

We will now fix $\delta$ and consider any $R_0>\delta\delta_0$. Note that the pregluing is defined to be symmetric with respect to $u^0,u^\infty$. We will be interested in estimating $\left.\frac{d}{dR}\right|_{R=R_0}\iota^{\delta,R}(u^0,u^\infty)$ on each chart $T_i$. (We are studying an $R$ parameterized families of maps, so the $R$ derivative only makes sense when we fix a parametrization on the domain, that is restrict to a chart.)

We will work on the Banach spaces
\[W^{1,p}_{u^{R_0}}:=W^{1,p}(\Sigma_{R_0},(u^{R_0})^*TM),\quad L^p_{u^{R_0}}:=L^p(\Sigma_{R_0},\Lambda^{0,1}\otimes_J(u^{R_0})^*TM).\]
We have fixed $R_0$, so these will be the fixed Banach spaces on which we apply Proposition \ref{ift} and Lemma \ref{fpderivativebound}.

Finally, we describe the maps $f_t, Q_t$ to which we will apply the implicit function theorem. Given $v\in T_xM$, define the map
\begin{equation}\label{psit}
\Psi_{v,x}:T_xM \rightarrow T_{\textnormal{exp}_x{v}}M
\end{equation}
to be parallel transport along the geodesic defined by $v$; we will sometimes suppress $x$. The geodesic and parallel transport are taken with respect to the Hermitian connection $\widetilde{\nabla}$. Note that $\Psi_{0,x}=$ id$:T_xM\rightarrow T_xM$, so $\Psi_{v,x}$ is uniformly invertible for small $v$.

We now wish to measure the difference between the pregluings $u^{R_0}(z)$ and $u^{R_0+t}(z)$ for $t$ small. However, these curves have different domains, $\Sigma_{R_0}$ and $\Sigma_{R_0 + t}$ respectively. Thus, we first identify these domain curves and then calculate our estimates on a fixed domain. Roughly, away from the inner neck the curves are identified by the identity map and there is an interpolation on the inner neck (where the pregluing is constant). More precisely, for each $t$ choose a diffeomorphism
\[\chi_t: \left\{|z| > \frac{2\delta}{R_0}\right\} \rightarrow \left\{|z| > \frac{2\delta}{R_0 + t}\right\}\]
such that $\chi_t(z) = z$ for $z\in\left\{|z| > \frac{1}{2 \delta R_0}\right\}$. In this way we identify each of the two charts $T_0, T_\infty$ for $\Sigma_{R_0}$ and $\Sigma_{R_0 + t}$ (see (\ref{twocharts})). Note that we choose $\chi_t$ to be the identity map outside of the inner neck $\left\{\frac{2\delta}{R} < |z| < \frac{1}{2\delta R}\right\}\subset T_0, T_\infty$. We can also choose $\chi_t$ such that its derivatives are uniformly bounded for $t$ near $0$. By choosing $\chi_t$ symmetric with respect to $\phi_{R_0}$, the map that identifies the two charts over the neck to form $\Sigma_{R_0}$ (see (\ref{phineck})), $\chi_t$ induces a diffeomorphism
\begin{equation}\label{chidiffeo}
\chi_t: \Sigma_{R_0}\rightarrow \Sigma_{R_0 + t}.
\end{equation}
By construction, in each chart $T_i = \left\{|z| > \frac{2\delta}{R_0}\right\}$, we have $\chi_t(z) = z$ on the region $\left\{|z| > \frac{1}{2\delta R_0}\right\}$ (this includes the outer neck, which is where the pregluing map is potentially nonconstant).

Define $\zeta_t$ to be the unique small vector field along $u^{R_0}(z)$ such that
\begin{equation}\label{zetat}
u^{R_0+t}\big(\chi_t(z)\big) = \exp_{u^{R_0}(z)}(\zeta_t(z)).
\end{equation}
For $t>0$ and small, in the chart $T_0$, we claim that $\zeta_t$ is given by:
\begin{equation}\label{zetat2}
\zeta_t(z) = \begin{cases}
0,& z\in\left\{|z|\geq\frac{2}{\delta R_0}\right\}\subset T_0\\
\Big(\rho\big(\delta(R_0+t)z\big)-\rho(\delta R_0z)\Big)\Psi_{\rho(\delta R_0z)\zeta^0(z),x_0}\big(\zeta^0(z)\big),& z\in\left\{\frac{1}{\delta (R_0+t)}\leq|z|\leq\frac{2}{\delta R_0}\right\}\subset T_0\\
0,& z\in\left\{\frac{1}{R_0}\leq|z|\leq\frac{1}{\delta (R_0+t)}\right\}\subset T_0.
\end{cases}
\end{equation}
To see this, first note that on the region $\left\{\frac{1}{R_0}\leq|z|\leq\frac{1}{\delta (R_0+t)}\right\}$, $u^{R_0}(z) = u^{R_0+t}\big(\chi_t(z)\big) = x_0$. In particular, $\zeta_t(z) = 0$ when $\chi_t(z) \neq z$. Next, we note that on the region $\left\{|z|\geq \frac{1}{\delta R_0}\right\}\subset T_0$ (outside the neck), $\chi_t(z) = z$ and both $u^{R_0}(z)$ and $u^{R_0+t}(z)$ are defined by (\ref{pregluing}) to be exponentiation of a scaling of $\zeta^0(z)$ (see Figure \ref{pregfig}). We then use the general fact that for $\lambda_1,\lambda_2\in\mathbb{R}$ and $v\in T_xM$
\[\exp_x\big((\lambda_1+\lambda_2)v\big) = \exp_{\exp_{x_0}(\lambda_1 v)}\big(\Psi_{v,x}(\lambda_2v)\big).\]
We are also using the fact that the tangent vector field of the geodesic $s\mapsto\exp_{x_0}\big(\zeta^0(z)s\big)$ is also parallel with respect to $\widetilde{\nabla}$ and hence $\Psi_{\rho(\delta R_0z)\zeta^0,x_0}(\zeta^0(z))$ is tangent to the geodesic. In particular, (\ref{zetat2}) shows that the size of $\zeta_t$ changes with $t$, but its direction does not. A similar description can be given for $t<0$ (and also on $T_\infty$ by symmetry).

\begin{figure}
\begin{center}
\includegraphics[scale=0.5]{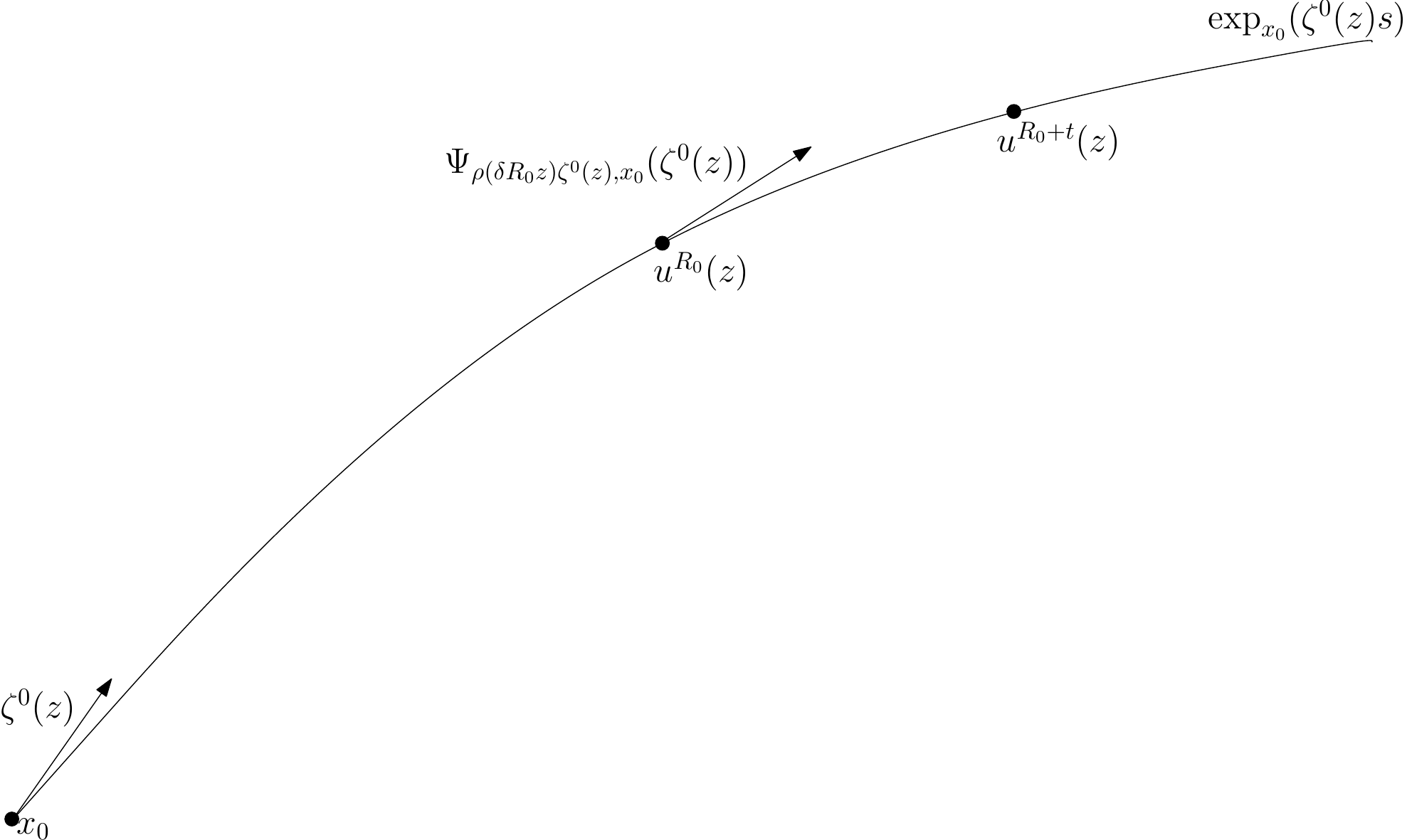}
\caption{The pregluings $u^{R_0}$ and $u^{R_0+t}$ for $\frac{1}{\delta (R_0+t)}\leq|z|\leq\frac{2}{\delta R_0}$}\label{pregfig}
\end{center}
\end{figure}

Let
\begin{equation}\label{phit}
\Phi_t:T_{u^{R_0}(z)}M\rightarrow T_{u^{R_0+t}\big(\chi_t(z)\big)}M
\end{equation}
denote parallel transport along the geodesic $\zeta_t$ with respect to the connection $\widetilde{\nabla}$. In terms of the notation above, $\Phi_t=\Psi_{\zeta_t,u^{R_0}(z)}$.

Recall that if $\chi_t(z) \neq z$, then $z$ is in the inner neck. However, the pregluing is constant in the inner neck, so $u^{R_0}(z) = u^{R_0+t}\left(\chi_t(z)\right)$. In particular, on the inner neck
\begin{equation}\label{phioninner}
\Phi_t : T_{x_0}M \rightarrow T_{x_0}M
\end{equation}
is the identity map.

We will abuse notation by always using $\Phi_t$ regardless of whether its input is $W^{1,p}$ vector fields or $L^p$ $(0,1)$ forms.

We finally give the family of maps $f_t, Q_t$ to which we apply Proposition \ref{ift}, the implicit function theorem. Define the function
\begin{equation}\label{ft}
f_t = \Phi_t^{-1}\circ \mathcal{F}_{u^{R_0+t}}\circ \Phi_t:W^{1,p}_{u^{R_0}}\rightarrow L^p_{u^{R_0}}
\end{equation}
where
\begin{equation}\label{nonlineardelbar}
\mathcal{F}_{u^{R}}(\xi) = \Psi_\xi^{-1}\left(\bar{\partial}_J(\exp_{u^R}(\xi))\right) : W^{1,p}_{u^{R_0}}\rightarrow L^p_{u^{R_0}}.
\end{equation}
(This is exactly the operator considered in \cite{JHOL}.) In particular, in the inner neck
\begin{equation}\label{nonlineardelbarinner}
\mathcal{F}_{u^{R}}(\xi) = \Psi_\xi^{-1}\left(\bar{\partial}_J(\exp_{x_0}(\xi))\right).
\end{equation}

By \cite[Remark 6.7.3]{JHOL},
\begin{equation}\label{linearizeddelbar}
\mathcal{F}_{u^R}(0) = \bar{\partial}_J(u^R),\quad d(\mathcal{F}_{u^R}(0)) = D_{u^R}
\end{equation}
where $D_u$ is the linearized Cauchy-Riemann operator. Similarly, we define
\begin{equation}\label{qt}
Q_t = \Phi_t^{-1}\circ Q_{u^{R_0+t}}\circ \Phi_t:L^p_{u^{R_0}}\rightarrow W^{1,p}_{u^{R_0}}
\end{equation}
where $Q_{u^R}$ is the bounded right inverse for $D_{u^R}$ constructed in \cite[$\S 10.5$]{JHOL}.

In the next section we will verify the hypotheses of Proposition \ref{ift} are satisfied for $f_t, Q_t$. However, now note that if $f_t(\xi)=0$, then $\exp_{u^{R_0+t}(z)}(\Phi_t(\xi))$ is exactly $\iota^{\delta,R_0+t}(u^0,u^\infty)$. Therefore, roughly speaking, the family of maps $f_t$ on the fixed space $W^{1,p}_{u^{R_0}}$ gives the gluing map for $u^{R_0 + t}$. We will make this more precise in the next section.

\subsection{Hypotheses of the Implicit Function Theorem}\label{asd}
In this section we verify that the hypotheses of Proposition \ref{ift} are satisfied for the functions $f_t$ and $Q_t$ defined in (\ref{ft}) and (\ref{qt}). As mentioned in the last section, first note that if $f_t(\xi)=0$, then $\exp_{u^{R_0+t}(z)}(\Phi_t(\xi))$ is exactly $\iota^{\delta,R_0+t}(u^0,u^\infty)$.

Now $d_0f_t$, the derivative of $f_t$ at $\xi=0$, is given by $d_0f_t = \Phi_t^{-1}\circ D_{u^{R_0+t}}\circ\Phi_t$ because $\Phi_t$ is linear and $d_0\Phi_t=$ id. The almost complex structure $J$ was assumed to be regular, so $d_0f_t$ is surjective. This verifies condition $(a)$ of Proposition \ref{ift}.

Also, $Q_t$ is a right inverse of $d_0f_t$ because $Q_{u^R}$ is a right inverse for $D_{u^R}$. This is verifies condition $(b)$.

It is proved in \cite[Proposition 3.5.3]{JHOL}, that for $p>2$ and for every constant $c_0>0$, there exists $c_1>0$ such that
\[\|du\|_{L^p}\leq c_0,\quad\|\xi\|_{L^\infty}\leq c_0\quad\Longrightarrow\quad \|d_\xi f_0-d_0f_0\|\leq c_1\|\xi\|_{W^{1,p}}\]
where $\|\cdot\|$ denotes operator norm. Choose $c_0 > \|Q_{u^{R_0}}\|$. \cite[Equation (10.5.8)]{JHOL} asserts that $\|Q_{u^{R_0}}\|$ is bounded independent of $R_0$ and hence the same is true for $\|Q_t\|$ with $t$ sufficiently small. It follows that choosing $\delta$ such that $c_1\delta < \frac{1}{2c_0}$ and $t$ sufficiently small,
\[\|\xi\|_{W^{1,p}}\leq\delta\quad\Longrightarrow\quad\|d_\xi f_t - d_0f_t\|_{W^{1,p}}\leq\frac{1}{2\|Q_t\|}.\]

\cite[Lemma 10.3.2]{JHOL} states that there is a constant $c_0=c_0(c,p)$ such that
\[\|\bar{\partial}_J(u^R)\|_{L^p}\leq c_0(\delta R)^{-2/p}.\]
Hence if $R_0$ is chosen large enough, then
\[\|f_0(0)\| = \left\|\bar{\partial}_J(u^{R_0})\right\|<\frac{\delta}{4\|Q_{u^{R_0}}\|}.\]
Using this, we see that
\[\|f_t(0)\|\leq\frac{\delta}{4\|Q_t\|}\]
for $t$ sufficiently small. This verifies the final condition $(c)$ of Proposition \ref{ift}.

Therefore, the hypotheses of Proposition \ref{ift} are satisfied with $c=c_0$ and we get $\xi_t^*\in W^{1,p}(\Sigma_{R_0},(u^{R_0})^*TM)$ such that $f_t(\xi_t^*) = 0$. Moreover, we get
\begin{equation}\label{sizeofxi}
\|\xi_t^*\|_{W^{1,p}}\leq 2c_0\|f_t(0)\| \leq 2c_0^2(\delta(R_0+t))^{-2/p}.
\end{equation}
As mentioned at the beginning of this section, we have $\exp_{u^{R_0+t}(z)}(\Phi_t(\xi_t^*))=\iota^{\delta,R_0+t}(u^0,u^\infty)$.

We are interested in understanding the derivative of this map in a chart, that is
\[\left.\frac{d}{dt}\right|_{t=0}\iota_i^{\delta,R_0+t}(u^0(z),u^\infty(z)) := \left.\frac{d}{dt}\right|_{t=0}\left(\left.\iota^{\delta,R_0+t}(u^0(z),u^\infty(z))\right|_{T_i}\right).\]
This map is known to be smooth in the gluing variable (c.f. \cite[Theorem 10.1.2]{JHOL}).

\begin{remark}\cite[Remark 10.5.5]{JHOL}\label{E}
Let $M$ be a compact Riemannian manifold. Then there are two smooth families of endomorphisms $E_1(x,\xi),E_2(x,\xi):T_xM\rightarrow T_{\exp_x(\xi)}M$ defined by
\[\frac{d}{dt}\exp_{\gamma(t)}(v(t)) = E_1(\gamma(t),v(t))\dot{\gamma} + E_2(\gamma(t),v(t))\widetilde{\nabla}_tv(t).\]
$E_j(x,0) =$ id, so $E_j(x,\xi)$ is uniformly invertible for sufficiently small $\xi$.\hfill$\Diamond$
\end{remark}

\begin{proposition}
Theorem \ref{mainthm} holds if
\begin{equation}\label{propeqn1}
\left\|\left.\frac{d}{dt}\right|_{t=0}u^{R_0+t}_i(z)\right\|_{W^{1,p}}\leq C\frac{1}{R_0}\frac{1}{(\delta R_0)^{2/p}}
\end{equation}
and
\begin{equation}\label{propeqn2}
\left\|\left.\frac{d}{dt}\right|_{t=0}(\xi_t^*)|_{T_i}\right\|_{W^{1,p}}\leq C\frac{1}{R_0}\frac{1}{(\delta R_0)^{2/p}}
\end{equation}
where $C$ depends only on $p,M,J,\omega$. Here $u^{R_0+t}_i$ denotes the restriction of $u^{R_0+t}$ to the chart $T_i$ and $\|\cdot\|_{W^{1,p}}$ is restriction of the norm on $\Sigma_{R_0}$ described in Section \ref{nsm}.
\end{proposition}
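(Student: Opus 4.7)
The plan is to differentiate the identity $\iota^{\delta,R_0+t}(u^0,u^\infty)(z) = \exp_{u^{R_0+t}(z)}(\Phi_t(\xi_t^*)(z))$ pointwise on the chart $T_i$, apply the smooth family $(E_1,E_2)$ from Remark \ref{E}, and show that each resulting term is controlled by the corresponding bound (\ref{propeqn1}) or (\ref{propeqn2}). Throughout, I will use that by (\ref{sizeofxi}) the norm $\|\xi_0^*\|$ is small for $R_0$ large, so that $E_j(u^{R_0}(z),\xi_0^*(z))$ is uniformly invertible and hence uniformly bounded, with bounds depending only on $p,M,J,\omega$.

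On the region $\{|z| > 1/(2\delta R_0)\}\subset T_i$, where $\chi_t$ is the identity, I set $\gamma(t) := u^{R_0+t}(z)$ and $v(t) := \Phi_t(\xi_t^*)(z)$ and read off from Remark \ref{E}
\[
\frac{d}{dt}\bigg|_{t=0}\iota^{\delta,R_0+t}_i(z) = E_1\bigl(u^{R_0}(z),\xi_0^*(z)\bigr)\cdot\frac{d}{dt}\bigg|_{t=0}u^{R_0+t}_i(z) + E_2\bigl(u^{R_0}(z),\xi_0^*(z)\bigr)\cdot\widetilde{\nabla}_t\bigl[\Phi_t(\xi_t^*)(z)\bigr]\Big|_{t=0}.
\]
By the product rule,
\[
\widetilde{\nabla}_t[\Phi_t(\xi_t^*)]\big|_{t=0} = \widetilde{\nabla}_t[\Phi_t(\xi_0^*)]\big|_{t=0} + \Phi_0\cdot\frac{d}{dt}\bigg|_{t=0}\xi_t^*.
\]
The first piece vanishes: since $\Phi_t$ is $\widetilde{\nabla}$-parallel transport along the geodesic $\zeta_t$, whose initial velocity at $t=0$ agrees with $\dot\gamma(0)$, the first-order change of parallel transport of a fixed vector equals zero. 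Using $\Phi_0=\mathrm{id}$, the second summand therefore reduces to $\frac{d}{dt}|_{t=0}\xi_t^*$, and the whole derivative becomes a uniformly bounded linear combination of $\frac{d}{dt}|_{t=0}u^{R_0+t}_i$ and $\frac{d}{dt}|_{t=0}(\xi_t^*)|_{T_i}$.

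On the inner neck $\{2\delta/R_0<|z|<1/(2\delta R_0)\}\subset T_i$, the pregluing is constant, $u^{R_0+t}\circ\chi_t\equiv x_0$, and $\Phi_t\equiv\mathrm{id}$ by (\ref{phioninner}), so the first summand vanishes and the second summand reduces directly to $E_2(x_0,\xi_0^*(z))\cdot\frac{d}{dt}|_{t=0}\xi_t^*(z)$. The same linear decomposition therefore holds on all of $T_i$. Taking $W^{1,p}$ norms and using (\ref{propeqn1}) and (\ref{propeqn2}) yields the estimate of Theorem \ref{mainthm}.

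The main obstacle I expect is to check that this pointwise decomposition respects the $W^{1,p}$ topology on $T_i$ with constants that are independent of $R_0$. This requires uniform control of the spatial derivatives of $E_1,E_2$ (which depend on $z$ through $u^{R_0}$ and $\xi_0^*$) and of the $\widetilde{\nabla}$-parallel transport error at first order in $t$, together with a careful gluing across the inner neck where the metric on $\Sigma_{R_0}$ interpolates via the partition of unity from Section \ref{nsm}. Once one observes that $\|\xi_0^*\|_{W^{1,p}}\le 2c_0^2(\delta R_0)^{-2/p}$ is uniformly small and that $\widetilde{\nabla}$-parallel transport is smooth in all of its arguments, these bookkeeping issues reduce to standard estimates and the proposition follows.
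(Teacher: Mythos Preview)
Your argument is essentially the same as the paper's: apply Remark~\ref{E} to the composition $\exp_{u^{R_0+t}}(\Phi_t(\xi_t^*))$, use that the $E_j$ are uniformly bounded for small $\xi_0^*$, and then reduce the $E_2$-term to $\frac{d}{dt}\big|_{t=0}\xi_t^*$ via the vanishing of $\widetilde{\nabla}_t\big|_{t=0}\Phi_t(\xi_0^*)$ and $\Phi_0=\mathrm{id}$. Your separate treatment of the inner neck is harmless but unnecessary, since the identity $\widetilde{\nabla}_t\big|_{t=0}(\Phi_t(\xi_t^*))=\frac{d}{dt}\big|_{t=0}\xi_t^*$ holds uniformly on $T_i$.

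One point deserves sharpening. Your justification that $\widetilde{\nabla}_t\big|_{t=0}[\Phi_t(\xi_0^*)]=0$ --- ``since $\Phi_t$ is parallel transport along the geodesic $\zeta_t$, whose initial velocity agrees with $\dot\gamma(0)$, the first-order change of parallel transport of a fixed vector equals zero'' --- is not the correct mechanism. Knowing only the initial velocity of the family $t\mapsto\zeta_t$ does not in general force the covariant derivative of parallel transport to vanish. What the paper uses (Remark~\ref{parallelofphi}, based on the explicit formula~(\ref{zetat2})) is that the \emph{direction} of $\zeta_t(z)$ is independent of $t$: only the scalar factor $\rho(\delta(R_0+t)z)-\rho(\delta R_0 z)$ changes. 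Hence the geodesic $s\mapsto\exp_{u^{R_0}(z)}(s\,\zeta_t/|\zeta_t|)$ is the \emph{same} curve for all $t$, the endpoints $u^{R_0+t}(\chi_t(z))$ all lie on it, and $\Phi_t(\xi_0^*)$ is a $\widetilde{\nabla}$-parallel vector field along that single geodesic. That is why its covariant $t$-derivative vanishes. Once you replace your sentence with this observation, your proof matches the paper's.
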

\begin{proof}
Using the notation of Remark \ref{E}, we see that
\begin{equation}
\begin{split}
\left.\frac{d}{dt}\right|_{t=0}\iota_i^{\delta,R_0+t}(u^0,u^\infty)(z) &= E_1(u^{R_0}_i(z),\xi_0^*)\left(\left.\frac{d}{dt}\right|_{t=0}u_i^{R_0+t}(z)\right) \\
           &+ E_2(u_i^{R_0}(z),\xi_0^*)\left.\widetilde{\nabla}_t\right|_{t=0}(\Phi_t(\xi_t^*)).
\end{split}
\end{equation}
Note that $E_j(x,\xi)$ is uniformly invertible for $\xi$ sufficiently small. Also, taking $R_0$ to be large enough, $\xi_0^*$ can be made arbitrarily small. So Theorem \ref{mainthm} holds if (\ref{propeqn1}) holds and
\begin{equation}\label{termtwo}
\left\|\left.\widetilde{\nabla}_t\right|_{t=0}(\Phi_t(\xi_t^*))\right\|_{W^{1,p}}\leq C\frac{1}{R_0}\frac{1}{(\delta R_0)^{2/p}}.
\end{equation}
For this estimate, we will use the following useful remark.
\begin{remark}\label{parallelofphi}
Observe that $\left.\widetilde{\nabla}_t\right|_{t=0}(\Phi_t(x))=0$ because as noted earlier $(\ref{zetat2})$ shows that the vector $\zeta_t$ from (\ref{zetat}) used to define to $\Phi_t$ does not change direction and hence defines the same geodesic for all $t$. Thus, $\Phi_t$ is parallel transport along the same geodesic for all $t$. Moreover, also as noted earlier, this geodesic is parallel with respect to the connection $\widetilde{\nabla}$ used to defined $\Phi_t$. Hence its covariant derivative is 0.\hfill$\Diamond$
\end{remark}
By Remark \ref{parallelofphi} and the fact that $d_0\Phi_t=$ id, we see that in the chart $T_i$,
\begin{equation}\label{abc}
\left.\widetilde{\nabla}_t\right|_{t=0}(\Phi_t(\xi_t^*)) = \left.\frac{d}{dt}\right|_{t=0}(\xi_t^*)|_{T_i}.
\end{equation}
This proves the proposition.
\end{proof}
\subsection{Derivative estimates}\label{derivativeestimates}
In this section we will fix $u^0,u^\infty$ and prove the estimates (\ref{propeqn1}) and (\ref{propeqn2}); this in turn proves Theorem \ref{mainthm}. The following are the two key lemmas.
\begin{lemma}\label{w1pur}
Let $u^R(z):\Sigma_R\rightarrow M$ denote the pregluing of $u^0$ and $u^\infty$ as defined in (\ref{pregluing}) using the cutoff function $\rho$ from (\ref{rho}). Let $u^{R_0}_i$ denote the restriction of $u^{R_0}$ to the chart $T_i$. Then
\[\left\|\left.\frac{d}{dt}\right|_{t=0}u_i^{R_0+t}(z)\right\|_{W^{1,p}} \leq C\frac{1}{R_0}\frac{1}{(\delta R_0)^{2/p}}\]
where $C$ depends only on $\rho$ and the metric on $M$.
\end{lemma}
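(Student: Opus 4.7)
My plan is to use the explicit formula $(\ref{zetat2})$ for $\zeta_t$ -- the vector field along $u^{R_0}$ that encodes the comparison between $u^{R_0+t}\circ\chi_t$ and $u^{R_0}$ -- and to show that its $t$-derivative at $t=0$ carries the required $W^{1,p}$ bound. Since $\zeta_0\equiv 0$ and the differential of $\exp$ at the zero vector is the identity, $(\ref{zetat})$ gives $\frac{d}{dt}|_{t=0} u^{R_0+t}(\chi_t(z)) = \frac{d}{dt}|_{t=0}\zeta_t(z)$. On each chart $T_i$ the map $\chi_t$ equals the identity outside the inner neck, while $\zeta_t$ itself vanishes throughout the inner neck, so I can compute $\frac{d}{dt}|_{t=0} u_i^{R_0+t}$ directly as $\frac{d}{dt}|_{t=0}\zeta_t$, viewed as a section along $u^{R_0}|_{T_i}$.

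The key observation is that in $(\ref{zetat2})$ only the scalar factor $\rho(\delta(R_0+t)z) - \rho(\delta R_0 z)$ depends on $t$, since $\Psi_{\rho(\delta R_0 z)\zeta^0(z),x_0}(\zeta^0(z))$ is $t$-independent. Differentiating at $t=0$ on $T_0$ yields
\[
\left.\frac{d}{dt}\right|_{t=0}\zeta_t(z) \;=\; \delta z\,\rho'(\delta R_0 z)\,\Psi_{\rho(\delta R_0 z)\zeta^0(z),x_0}\bigl(\zeta^0(z)\bigr),
\]
supported on the outer neck $\{1/(\delta R_0)\leq |z|\leq 2/(\delta R_0)\}\subset T_0$ (with the analogous expression on $T_\infty$ by symmetry). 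In particular, the support is an annulus of area comparable to $(\delta R_0)^{-2}$ in the round metric.

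On this support, pointwise estimates follow from three facts: $|\delta z|\leq 2/R_0$; $\|\rho'\|_\infty$ and $\|\rho''\|_\infty$ are absolute constants; and $|\zeta^0(z)|\leq k|z|\leq 2k/(\delta R_0)$ with $|\widetilde{\nabla}\zeta^0|\leq k$ coming from $\|du^0\|_{L^\infty}\leq k$ (using that $\Psi$ is an isometry close to the identity for small $\zeta^0$). Collecting the product-rule contributions, the dangerous factor $\delta R_0$ produced by differentiating $\rho'(\delta R_0 z)$ is absorbed by the prefactor $|\delta z|\leq 2/R_0$, leaving
\[
\left|\tfrac{d}{dt}\big|_{t=0}\zeta_t\right|\leq \frac{C}{R_0\,\delta R_0},\qquad \left|\widetilde{\nabla}\tfrac{d}{dt}\big|_{t=0}\zeta_t\right|\leq \frac{C}{R_0}.
\]
Integrating the $p$-th power over the support of area $\sim(\delta R_0)^{-2}$ and taking the $p$-th root multiplies by $(\delta R_0)^{-2/p}$, giving the claimed bound $C R_0^{-1}(\delta R_0)^{-2/p}$; the zeroth-order term is actually smaller by a factor $(\delta R_0)^{-1}$ and is thus dominated. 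The only mild technical points I foresee are verifying that the round metric on $S^2$ is uniformly equivalent to the Euclidean metric on the scales $|z|\lesssim (\delta R_0)^{-1}\ll 1$ in question, and that the $W^{1,p}$ norm from Section \ref{nsm} restricted to the support of $\zeta_t$ coincides, up to universal constants, with the standard $W^{1,p}$ norm in the chart $T_i$. Both hold since the support of $\zeta_t$ is disjoint from the inner neck where the partition-of-unity interpolation defining the norm takes place.
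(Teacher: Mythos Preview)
Your proposal is correct and follows essentially the same route as the paper. The paper differentiates the pregluing formula $(\ref{pregluing})$ directly to obtain $\left.\frac{d}{dt}\right|_{t=0}u_0^{R_0+t}(z)=(d_{\rho(\delta R_0 z)\zeta^0(z)}\exp_{x_0})\bigl(\delta(z\cdot\nabla\rho(\delta R_0 z))\zeta^0(z)\bigr)$, whereas you route the computation through $(\ref{zetat2})$ and obtain the same vector written as $\delta(z\cdot\nabla\rho(\delta R_0 z))\,\Psi_{\rho(\delta R_0 z)\zeta^0(z),x_0}(\zeta^0(z))$; these agree because $d_v\exp_{x_0}$ and $\Psi_{v,x_0}$ coincide on vectors parallel to $v$. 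The subsequent $L^\infty$ bounds (your product-rule argument absorbing the factor $\delta R_0$ from $\nabla^2\rho$ against $|\delta z|\le 2/R_0$) and the integration over the annulus of area $\sim(\delta R_0)^{-2}$ mirror the paper's Lemma~\ref{zetaob} and its $G(v,w)$ computation exactly. One cosmetic point: $\rho$ has domain $\mathbb{C}$, so write $z\cdot\nabla\rho(\delta R_0 z)$ rather than $z\,\rho'(\delta R_0 z)$.
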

\begin{lemma}\label{mainlemma}
Let $f_t:W^{1,p}_{u^{R_0}}\rightarrow L^p_{u^{R_0}}$ be the map from (\ref{ft}) and let $d_0f_t:W^{1,p}_{u^{R_0}}\rightarrow L^p_{u^{R_0}}$ denote its derivative at $0$. Let $Q_t$ be the right inverse of $d_0f_t$ from (\ref{qt}). Let $\xi_0^{*}\in C^\infty(\Sigma_{R_0},(u^{R_0})^{*}TM)$ denote the vector field along $u^{R_0}$ given by Proposition \ref{ift} applied to the map $f_0 = \mathcal{F}_{u^{R_0}}$ so  $f_0(\xi_0^*)=0$. Then
\[\left\|\left.\frac{d}{dt}\right|_{t=0}\left.\Big(-Q_tf_t(\xi_0^*) + Q_t\big(d_0f_t(\xi_0^*)\big)\Big)\right|_{T_i}\right\|_{W^{1,p}}\leq C\frac{1}{R_0}\frac{1}{(\delta R_0)^{2/p}}.\]
\end{lemma}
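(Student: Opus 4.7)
The plan is to view this lemma as providing hypothesis $(\ref{fixedpointder})$ of Lemma \ref{fpderivativebound} for the family of Newton-iterate contractions $\phi_t(\xi) := -Q_t f_t(\xi) + Q_t(d_0 f_t(\xi))$, whose fixed points are the vector fields $\xi_t^{*}$ appearing in $(\ref{propeqn2})$. Writing $A_t := d_0 f_t$ and differentiating at $t=0$ by the product rule gives
\begin{equation*}
\left.\tfrac{d}{dt}\right|_{t=0}\phi_t(\xi_0^{*}) = -\dot{Q}_0 f_0(\xi_0^{*}) - Q_0\dot{f}_0(\xi_0^{*}) + \dot{Q}_0 A_0 \xi_0^{*} + Q_0 \dot{A}_0 \xi_0^{*},
\end{equation*}
and the first term vanishes since $f_0(\xi_0^{*}) = 0$. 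Because $\|Q_0\|$ is bounded uniformly in $R_0$ by \cite[(10.5.8)]{JHOL}, the proof reduces to bounding each of $\|\dot{f}_0(\xi_0^{*})\|_{L^p(T_i)}$, $\|\dot{A}_0\xi_0^{*}\|_{L^p(T_i)}$, and $\|\dot{Q}_0 A_0\xi_0^{*}\|_{W^{1,p}(T_i)}$ by $C R_0^{-1}(\delta R_0)^{-2/p}$.

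For $\dot{f}_0(\xi_0^{*})$ and $\dot{A}_0\xi_0^{*}$, I would unpack $f_t = \Phi_t^{-1}\circ \mathcal{F}_{u^{R_0+t}}\circ \Phi_t$ by the chain rule. Since $\Phi_0 = \operatorname{id}$ and, by Remark \ref{parallelofphi}, $\Phi_t$ is parallel transport along a geodesic which is itself $\widetilde{\nabla}$-parallel, the contributions from differentiating the outer $\Phi_t^{\pm 1}$ factors are negligible, leaving the principal terms
\begin{equation*}
\dot{f}_0(\xi_0^{*}) = \partial_t \mathcal{F}_{u^{R_0+t}}|_{t=0}(\xi_0^{*}) + (\text{l.o.t.}), \qquad \dot{A}_0\xi_0^{*} = \partial_t D_{u^{R_0+t}}|_{t=0}(\xi_0^{*}) + (\text{l.o.t.}).
\end{equation*}
Both are supported on the cutoff annulus $\{\tfrac{1}{\delta(R_0+t)} \leq |z| \leq \tfrac{2}{\delta R_0}\} \subset T_i$, since this is where the pregluing $u^{R_0+t}$ actually depends on $t$. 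Differentiating $\rho(\delta R z)$ in $R$ produces a factor of order $1/R_0$, and integrating in $L^p$ over an annulus of area $\sim (\delta R_0)^{-2}$ contributes the factor $(\delta R_0)^{-2/p}$. This is precisely the mechanism driving Lemma \ref{w1pur}, which I would invoke in parallel; for the linearized estimate one additionally uses the size bound $\|\xi_0^{*}\|_{W^{1,p}} \leq 2 c_0^{2}(\delta R_0)^{-2/p}$ from $(\ref{sizeofxi})$ to absorb the factor of $\xi_0^{*}$.

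For the $\dot{Q}_0$ term, Taylor expansion combined with $f_0(\xi_0^{*}) = 0$ gives
\begin{equation*}
A_0\xi_0^{*} = -f_0(0) + O(\|\xi_0^{*}\|_{W^{1,p}}^{2}) = -\bar{\partial}_J(u^{R_0}) + O(\|\xi_0^{*}\|_{W^{1,p}}^{2}),
\end{equation*}
whose $L^p$ norm is $O((\delta R_0)^{-2/p})$ by \cite[Lemma 10.3.2]{JHOL}. It therefore suffices to show $\|\dot{Q}_0\|_{L^p \to W^{1,p}} \leq C/R_0$, which I would obtain by differentiating in $R$ the explicit construction of $Q_{u^R}$ from \cite[\S 10.5]{JHOL}: its $R$-dependence enters only through the neck cutoffs and the $R$-dependent identification of domains, each of which carries the same $1/R$ gain as in Lemma \ref{w1pur}.

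The main obstacle is this last step. Unlike Lemma \ref{w1pur}, which concerns a concrete cutoff formula, $Q_{u^R}$ is built by splicing together local right inverses on $S_0$ and $S_\infty$ via neck cutoffs, and its $R$-derivative mixes derivatives of these cutoffs with derivatives of the base-point-dependent parallel transports. Organizing the resulting terms so that the $1/R$ factor is preserved, and so that restricting to a single chart $T_i$ does not disturb the partition-of-unity interpolation defining the norm in Section \ref{nsm}, will be the bulk of the calculation. The key simplifying feature is that the pregluing is constant on the inner neck, so genuine $R$-differentiation is confined to the outer neck annulus, where the cutoff argument applies cleanly and chart-by-chart.
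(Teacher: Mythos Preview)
Your decomposition into the three terms $Q_0\dot f_0(\xi_0^*)$, $\dot Q_0 A_0\xi_0^*$, $Q_0\dot A_0\xi_0^*$ (after killing $\dot Q_0 f_0(\xi_0^*)$) is exactly the paper's equation (\ref{threeterms}), and your treatment of the first and third terms via annulus-localization and Lemma \ref{w1pur} matches Steps 1 and 3 of the paper's proof.

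The only substantive difference concerns the $\dot Q_0$ term. You propose to bound $\|A_0\xi_0^*\|_{L^p}$ by Taylor expanding $f_0$ about $0$, and then to earn the $1/R_0$ from $\|\dot Q_0\|$ by differentiating the splicing construction of $Q_{u^R}$ from scratch. The paper does both of these more cheaply. For the first, no Taylor expansion is needed: $\|D_{u^{R_0}}\xi_0^*\|_{L^p}\le C\|\xi_0^*\|_{W^{1,p}}\le C(\delta R_0)^{-2/p}$ directly, since $D_{u^R}$ has operator norm bounded uniformly in $R$. For the second, the paper simply invokes \cite[Lemma 10.6.2]{JHOL}, which already packages the statement
\[
\Bigl\|\widetilde\nabla_t\big|_{t=0}\bigl(Q_{u^{R_0+t}}\circ\Phi_t(\eta)\bigr)\Bigr\|_{W^{1,p}}
\;\le\; C\,\Bigl\|\tfrac{d}{dt}\big|_{t=0}u^{R_0+t}\Bigr\|_{W^{1,\infty}}\|\eta\|_{L^p},
\]
turning what you flag as ``the bulk of the calculation'' into a citation. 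Combined with the $W^{1,\infty}$ bound $\|\partial_t u^{R_0+t}\|_{W^{1,\infty}}\le C/R_0$ established in the course of Lemma \ref{w1pur}, this gives the $\dot Q_0$ estimate immediately. Your from-scratch approach would work and is morally what underlies that lemma, but the paper avoids reproducing it.
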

Lemma \ref{w1pur} exactly provides the estimate (\ref{propeqn1}). For (\ref{propeqn2}), recall that Proposition \ref{ift} constructs $\xi_t^*$ as a fixed point of the contraction mapping
\[-Q_tf_t(x)+Q_t(d_0f_t(x)).\]
Combining Lemmas \ref{mainlemma} and \ref{fpderivativebound}, we see that (\ref{propeqn2}) holds.

The rest of this section is devoted to proofs of Lemmas \ref{w1pur} and \ref{mainlemma}.

\begin{proof}[Proof of Lemma \ref{w1pur}]
Let $t>0$ and restrict to the chart $T_0$ so all of our analysis will be done on the region $\{|z|\geq1/R_0\}\subset T_0$. (The same analysis holds for $t<0$.) We can restrict to $T_0$ by the symmetry of our construction. The only region in which $\left.\frac{d}{dt}\right|_{t=0}u_0^{R_0+t}(z)$ is nonzero in the annulus $\frac{1}{\delta R_0}\leq|z|\leq\frac{2}{\delta R_0}$ (recall $u_0^{R_0}$ is the restriction of the pregluing $u^{R_0}$ to the chart $T_0$.) On this annulus,
\begin{align}
\nonumber\left.\frac{d}{dt}\right|_{t=0}u_0^{R_0+t}(z) &= \left.\frac{d}{dt}\right|_{t=0}\bigg(\exp_{x_0}\Big(\rho\big(\delta (R_0+t)z\big)\enskip\zeta^0(z)\Big)\bigg)\\
\nonumber                                            &=\left(d_{\rho(\delta R_0z)\zeta^0(z)}\exp_{x_0}\right)\left(\left.\frac{d}{dt}\right|_{t=0}\rho(\delta R_0z+\delta tz)\zeta^0(z)\right)\\
\label{pregluingeq}                                            &=\left(d_{\rho(\delta R_0z)\zeta^0(z)}\exp_{x_0}\right)\Big(\delta\big(z\cdot\nabla\rho(\delta R_0z)\big)\zeta^0(z)\Big)
\end{align}
where in the last line we are thinking of $\delta$ as a constant, $z,\nabla\rho(\delta R_0z)\in\mathbb{C}=\mathbb{R}^2$, and $\cdot$ denotes the usual dot product.
So
\begin{equation}\label{5411}
\left\|\left.\frac{d}{dt}\right|_{t=0}u_0^{R_0+t}(z)\right\|_{L^\infty}  \leq C_1\|\delta\left(z\cdot\nabla\rho(\delta R_0z)\right)\zeta^0(z)\|_{L^\infty}
\end{equation}
where $C_1$ depends only on the metric. We now make some observations that will be useful in estimating the right hand side of (\ref{5411}) and in subsequent estimates.
\begin{lemma}\label{zetaob}
Let $\zeta^0(z)\in T_{x_0}M$ denote the vector defined by (\ref{zeta0def}). Let $\rho:\mathbb{C}\rightarrow\mathbb{C}$ denote the cutoff function from (\ref{rho}). Below, by \textit{constant} we mean a constant independent of $z,\rho,\zeta^0,\delta,R_0$. Then on the annulus $\frac{1}{\delta R_0}\leq|z|\leq\frac{2}{\delta R_0}$ the following hold.
\begin{enumerate}[(1)]
\item\label{ob1} The size of $\zeta^0(z)$ is bounded by a constant times $\frac{1}{\delta R_0}$.
\item\label{ob2} The size of the first derivatives of $\zeta^0(z)$ are uniformly bounded
\end{enumerate}
\end{lemma}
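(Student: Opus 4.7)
The plan is to unpack the defining relation $u^0(z) = \exp_{x_0}(\zeta^0(z))$ and view $\zeta^0$ as a composition, and then both bounds will follow from elementary calculus together with the a priori $L^\infty$ bound $\|du^0\|_{L^\infty} \leq k$. The key preliminary remark is that on the annulus $\frac{1}{\delta R_0}\leq|z|\leq\frac{2}{\delta R_0}$ we are close to $z=0$ for $R_0$ large, so $u^0(z)$ stays well inside the injectivity radius about $x_0=u^0(0)$, and consequently $\zeta^0(z) = \exp_{x_0}^{-1}(u^0(z))$ is defined and smooth there.

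For (\ref{ob1}), observe that since $\zeta^0(z)$ is the $\widetilde{\nabla}$-geodesic initial vector at $x_0$ whose endpoint is $u^0(z)$, its length equals the Riemannian distance (up to the fact that $\widetilde{\nabla}$ is not Levi-Civita, which only changes things by a smooth bounded factor for small arguments). In particular, for $|z|$ small we have $|\zeta^0(z)| \leq C_0\, d_M(x_0, u^0(z))$ where $C_0$ depends only on the metric. The distance is then controlled by integrating $du^0$ along the straight line from $0$ to $z$:
\[
d_M(x_0, u^0(z)) \;\leq\; \int_0^1 |du^0(tz)(z)|\, dt \;\leq\; \|du^0\|_{L^\infty}\, |z| \;\leq\; k\, |z|.
\]
On the annulus $|z|\leq \frac{2}{\delta R_0}$, this gives $|\zeta^0(z)| \leq \frac{2C_0 k}{\delta R_0}$, which is the desired bound.

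For (\ref{ob2}), I would simply differentiate $\zeta^0 = \exp_{x_0}^{-1}\circ u^0$ via the chain rule, writing
\[
d_z \zeta^0 \;=\; \bigl(d_{u^0(z)}\exp_{x_0}^{-1}\bigr)\circ d_z u^0.
\]
Here $\|d_z u^0\|\leq k$ by hypothesis, and $d\exp_{x_0}^{-1}$ is smooth on a fixed neighborhood of $x_0$ (the radius being determined by the injectivity radius of $M$, independent of $R_0$), hence uniformly bounded on the image of the annulus under $u^0$ once $R_0$ is large. Multiplying gives a uniform bound on $|d\zeta^0(z)|$ by a constant depending only on $k$, $J$, $\omega$.

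There really is no obstacle here; the whole content is that one is working near the nodal point $z=0$, and $\zeta^0$ is smooth there with $\zeta^0(0)=0$. The only thing to be a little careful about is using the correct connection: since the exponential map in (\ref{zeta0def}) is defined via $\widetilde{\nabla}=\nabla-\tfrac12 J(\nabla J)$ rather than $\nabla$, one should note that $\widetilde{\nabla}$ differs from the Levi-Civita connection by a smooth bounded tensor, so the norm and first-derivative estimates for $\exp_{x_0}^{-1}$ near $x_0$ are equivalent to those for the Riemannian exponential up to a multiplicative constant. Both assertions then fold into a single application of the mean value inequality and the chain rule.
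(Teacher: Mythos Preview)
Your proof is correct and takes essentially the same approach as the paper: the paper's argument is simply ``$\zeta^0(0)=0$ and $|z|\leq\frac{2}{\delta R_0}$'' for (\ref{ob1}) and ``smoothness of $\zeta^0$'' for (\ref{ob2}), which is exactly what you have unpacked via the mean value inequality and the chain rule. If anything, your version is more careful about why the constants are uniform in $u^0$ (through the explicit bound $\|du^0\|_{L^\infty}\leq k$), whereas the paper leaves this implicit.
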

\begin{proof}
For \textit{(\ref{ob1})} note that $\zeta^0(0) = 0$ and $|z|\leq\frac{2}{\delta R_0}$. Statement \textit{(\ref{ob2})} holds simply by the smoothness of $\zeta^0(z)$.
\end{proof}
We can now use Lemma \ref{zetaob} to see
\begin{equation}\label{5412}
\|\delta\left(z\cdot\nabla\rho(\delta R_0z)\right)\zeta^0(z)\|_{L^\infty} \leq C_2\frac{1}{R_0}
\end{equation}
because $\nabla\rho$ is uniformly bounded and $\delta$ and the $\frac{1}{\delta R_0}$ cancel out to leave just a factor of $\frac{1}{R_0}$ and $C_2$ is independent of $\delta$. Therefore combining (\ref{5411}) and (\ref{5412}) and the fact that the integral for this estimate is over a region of area at most $\frac{4\pi}{(\delta R_0)^{2}}$, we find
\[\left\|\left.\frac{d}{dt}\right|_{t=0}u_0^{R_0+t}(z)\right\|_{L^p} \leq C_3\frac{1}{R_0}\frac{1}{(\delta R_0)^{2/p}}.\]

We now need to estimate the $L^p$ norm of the first derivatives of $\left.\frac{d}{dt}\right|_{t=0}u_0^{R_0+t}(z)$ in $z$ directions. We take the same approach as earlier: We uniformly bound and then integrate over the disc $|z|\leq\frac{2}{\delta R_0}$. By (\ref{pregluingeq}),
\[\left.\frac{d}{dt}\right|_{t=0}u_0^{R_0+t}(z)=\left(d_{\rho(\delta R_0z)\zeta^0(z)}\exp_{x_0}\right)\Big(\delta\big( z\cdot\nabla\rho(\delta R_0z)\big)\zeta^0(z)\Big).\]
We will briefly consider a general situation. We define the function
\[G(v,w) = (d_v\exp_x)(w),\qquad\qquad x\in M, v\in T_xM,w\in T(T_xM).\]
Note that
\[G(v,\cdot)\textnormal{ is linear}\quad\textnormal{and}\quad G(0,w) = \textnormal{id}.\]
Hence $G(v,w)$ is uniformly invertible for small $v$. We need to take the derivative of $G(v,w)$ with respect to $s_1,s_2$ where $z=(s_1,s_2)\in\mathbb{R}^2$ and both $v$ and $w$ depend on $z$. Using the chain rule we see that
\begin{equation}\label{derivativeofexp}
\frac{\partial}{\partial s_1}G(v,w) = \frac{\partial G}{\partial v}(v,w)\left(\frac{\partial v}{\partial s_1}\right) + \frac{\partial G}{\partial w}(v,w)\left(\frac{\partial w}{\partial s_1}\right)
\end{equation}
where $\frac{\partial}{\partial v},\frac{\partial}{\partial w}$ denotes differentials in those variables.
We now specialize to the case we are interested in, namely
\[v=\rho(\delta R_0z)\zeta^0(z),\quad w=\delta\big( z\cdot\nabla\rho(\delta R_0z)\big)\zeta^0(z).\]

First let us consider first derivatives of $v$. We claim they are uniformly bounded. They are a sum of two terms. One is first derivatives of $\rho(\delta R_0z)$ (which grow like $\delta R_0)$ multiplied by $\zeta^0(z)$. By Lemma \ref{zetaob}, $\zeta^0(z)$ is bounded by a constant times $\frac{1}{\delta R_0}$. Hence these first type terms are uniformly bounded. The second type of terms are $\rho(\delta R_0z)$ multiplied by derivatives of $\zeta^0(z)$. By Lemma \ref{zetaob} these are uniformly bounded as well.

Next consider first derivatives of $w$. We claim they are bounded by a constant times $1/R_0$. They can also be expressed as a sum of two terms. One is first derivatives of $\delta z\cdot\nabla\rho(\delta R_0z)$ multiplied by $\zeta^0(z)$. The first derivatives of $\delta z\cdot\nabla\rho(\delta R_0z)$ are bounded by a constant times $\delta$. By Lemma \ref{zetaob}, $\zeta^0(z)$ is bounded by a constant times $\frac{1}{\delta R_0}$. Therefore, the first type of terms are bounded by a constant times $1/R_0$. The second term is $\delta z\cdot\nabla\rho(\delta R_0z)$ multiplied by the first derivatives of $\zeta^0(z)$. Then $\delta z\cdot\nabla\rho(\delta R_0z)$ is bounded by a constant times $1/R_0$ because $|z|\leq\frac{2}{\delta R_0}$ and the first derivatives of $\zeta^0(z)$ are uniformly bounded by Lemma \ref{zetaob}. So the second type of term is also bounded by a constant times $1/R_0$. Therefore the first derivatives of $\left(\delta z\cdot\nabla\rho(\delta R_0z)\right)\zeta^0(z)$ are bounded by  a constant times $\frac{1}{R_0}$.

Finally, note that $v$ is bounded by a constant times $1/R_0$ by Lemma \ref{zetaob}.

Combining the above facts and the fact that when $z=0$, we have $v=w=0$ and hence
\[\frac{\partial G}{\partial v}(v(0),w(0)) = 0,\] we see that first derivatives are uniformly bounded by $1/R_0$.
Again we integrate over a region of area at most $\frac{4\pi}{(\delta R_0)^{2}}$. This proves the lemma.
\end{proof}
\begin{proof}[Proof of Lemma \ref{mainlemma}]
As in the proof of Lemma \ref{w1pur}, we will restrict to the chart $T_0$ and the region $\{|z| \geq 1/R_0\} \subset T_0$, which can be done by symmetry. We will omit the $|_{T_0}$ throughout this proof for simplicity of notation. Applying the chain rule, we get
\begin{align*}
&\left.\frac{d}{dt}\right|_{t=0}\Big(-Q_tf_t(\xi_0^*) + Q_t\big(d_0f_t(\xi_0^*)\big)\Big)\\
=&-\left.\frac{d}{dt}\right|_{t=0}Q_t(f_0(\xi_0^*)) - Q_0\left(\left.\frac{d}{dt}\right|_{t=0}\big(f_t(\xi_0^*)\big)\right) \\&+ \left.\frac{d}{dt}\right|_{t=0}\Big(Q_t\big(d_0f_0(\xi_0^*)\big)\Big)+ Q_0\left(\left.\frac{d}{dt}\right|_{t=0}\big(d_0f_t(\xi_0^*)\big)\right)\\
=&-Q_{u^{R_0}}\left(\left.\frac{d}{dt}\right|_{t=0}\big(f_t(\xi_0^*)\big)\right) + \left.\frac{d}{dt}\right|_{t=0}\Big(Q_t\big(D_{u^{R_0}}(\xi_0^*)\big)\Big) + Q_0\left(\left.\frac{d}{dt}\right|_{t=0}\big(d_0f_t(\xi_0^*)\big)\right)\numberthis\label{threeterms}
\end{align*}
where the second equality uses the fact that $f_0(\xi_0^*)=0$ by definition of $\xi_t^*$ and also uses the definitions of $f_t$ and $Q_t$ from (\ref{ft}) and (\ref{qt}).

Recall from (\ref{phioninner}) that on the inner neck (where the charts $T_0$ and $T_\infty$ are identified) the map $\Phi_t$ used to define $f_t, Q_t$ is the identity. Thus,
\[f_t(\xi) = \Psi_{\xi}^{-1}(\bar{\partial}\exp_{x_0}(\xi))\]
as in (\ref{nonlineardelbar}) and (\ref{nonlineardelbarinner}). Hence, $d_0f_t$ is the usual linearized Cauchy-Riemann operator from (\ref{linearizeddelbar}) and $Q_t$ is its right inverse.

The rest of this section will be estimating the three terms of (\ref{threeterms}). Each term will be estimated in a step below.\\

\noindent\underline{Step 1}: We first focus on the first term of (\ref{threeterms}). As remarked in Section \ref{asd}, $\|Q_{u^{R_0}}\|$ is bounded independently of $R_0$. Thus, we focus on
\[\left\|\left.\frac{d}{dt}\right|_{t=0}(f_t(\xi_0^{*}))\right\|_{L^p}.\]
Using Remark \ref{parallelofphi}, we see that
\begin{equation}\label{first}
\left\|\left.\frac{d}{dt}\right|_{t=0}(f_t(\xi_0^{*}))\right\|_{L^p}=\left\|\left.\widetilde{\nabla}_t\right|_{t=0}\left(\Psi_{\Phi_t(\xi_0^{*})}^{-1}\Big(\bar{\partial}_J\big(\exp_{u^{R_0+t}}(\Phi_t(\xi_0^{*}))\big)\Big)\right)\right\|_{L^p}
\end{equation}
where $\Phi_t$ is defined in (\ref{phit}).

\begin{remark}\label{Psi}
Let $\Psi$ be the map defined in (\ref{psit}). Covariantly differentiate the identity
\[\Psi_{v_t}\circ\Psi_{v_t}^{-1}(w_t) = w_t\]
and use the fact that $\Psi_v$ is uniformly invertible for small $v$ to see that
\[\left\|\left.\widetilde{\nabla}_t\right|_{t=0}\big(\Psi_{v_t}^{-1}(w_t)\big)\right\| \leq K\left(\left\|\left.\widetilde{\nabla}_t\right|_{t=0}w_t\right\| + \left\|\Psi_{\left.\frac{d}{dt}\right|_{t=0}v_t}(w_0)\right\|\right)\hfill\]
for any norm $\|\cdot\|$. \hfill$\Diamond$
\end{remark}

In order to estimate (\ref{first}), we can apply Remark \ref{Psi} with
\[v_t=\Phi_t(\xi_0^{*}),\quad w_t=\bar{\partial}\left(\exp_{u^{R_0+t}(z)}(\Phi_t(\xi_0^{*}))\right),\quad \|\cdot\|=\|\cdot\|_{L^p}.\]
First,
\[\Psi_{\left.\frac{d}{dt}\right|_{t=0}v_t}(w_0) = \Psi_{\left.\frac{d}{dt}\right|_{t=0}v_t}\left(\bar{\partial}\left(\exp_{u^{R_0}(z)}(\xi_0^{*})\right)\right) = 0\]
by the definition of $\xi_0^{*}$. Therefore, it suffices to estimate
\[\left\|\left.\widetilde{\nabla}_t\right|_{t=0}w_t\right\|_{L^p}=\left\|\left.\frac{d}{dt}\right|_{t=0}\left(\bar{\partial}\left(\exp_{u^{R_0+t}(z)}(\Phi_t(\xi_0^{*}))\right)\right)\right\|_{L^p}.\]
Using the notation of Remark \ref{E}, this is
\begin{align*}
&D_{u^{R_0}(z)}\left(E_1(u^{R_0}(z),\xi_0^{*})\left(\left.\frac{d}{dt}\right|_{t=0}u^{R_0+t}(z)\right) + E_2(u^{R_0}(z),\xi_0^{*})\left.\widetilde{\nabla}_t\right|_{t=0}(\Phi_t(\xi_0^{*}))\right)\\
=&D_{u^{R_0}(z)}\left(E_1(u^{R_0}(z),\xi_0^{*})\left(\left.\frac{d}{dt}\right|_{t=0}u^{R_0+t}(z)\right)\right)
\end{align*}
where the second equality uses Remark \ref{parallelofphi}. As noted in Remark \ref{E}, the map $E_1(x,\xi)$ is uniformly invertible for $\xi$ sufficiently small and hence this is the case for $\xi_0^*$ if $R_0$ is chosen large enough. Lemma \ref{w1pur} provides the estimate for $\left\|\left.\frac{d}{dt}\right|_{t=0}u^{R_0+t}(z)\right\|_{W^{1,p}}$. Next, recall the following formula for the operator $D_u$ from \cite[Proposition 3.1.1]{JHOL}:
\begin{equation}\label{equationford}
D_u\xi = \frac{1}{2}\left(\nabla\xi + J(u)\nabla\xi\circ j_\Sigma\right) - \frac{1}{2}J(u)(\nabla_\xi J)(u)\partial_J(u).
\end{equation}
By \cite[Lemma 10.3.2]{JHOL}, $\left\|u^R(z)\right\|_{W^{1,\infty}}$ is bounded independently of $R$. Thus, the operator norm of $D_{u^{R}}$ is also bounded independently of $R$. Therefore,
\[\left\|\left.\frac{d}{dt}\right|_{t=0}\left(\bar{\partial}\left(\exp_{u^{R_0+t}(z)}(\Phi_t(\xi_0^*))\right)\right)\right\|_{L^p}\leq C'\left\|\left.\frac{d}{dt}\right|_{t=0}u^{R_0+t}(z)\right\|_{W^{1,p}}\leq C_4\frac{1}{R_0}\frac{1}{(\delta R_0)^{2/p}}.\]
Thus, the first term of (\ref{threeterms}) has the desired bound, that is
\begin{equation}\label{firstterm}
\left\|Q_{u^{R_0}}\left(\left.\frac{d}{dt}\right|_{t=0}\big(f_t(\xi_0^*)\big)\right)\right\|_{W^{1,p}} \leq C\frac{1}{R_0}\frac{1}{(\delta R_0)^{2/p}}.
\end{equation}

\noindent\underline{Step 2}: Next, we bound the $W^{1,p}$ norm of the second term of (\ref{threeterms}).
\begin{align*}
\left.\frac{d}{dt}\right|_{t=0}\Big(Q_t\big(D_{u^{R_0}}(\xi_0^*)\big)\Big) &= \left.\frac{d}{dt}\right|_{t=0}\Big(\Phi_t^{-1}\circ Q_{u^{R_0+t}}\circ \Phi_t\big(D_{u^{R_0}}(\xi_0^*)\big)\Big)\\
&=\left.\widetilde{\nabla}_t\right|_{t=0}\Big(Q_{u^{R_0+t}}\circ \Phi_t\big(D_{u^{R_0}}(\xi_0^*)\big)\Big)
\end{align*}
Examining the proof of \cite[Lemma 10.6.2]{JHOL}, we see that
\[\left\|\left.\widetilde{\nabla}_t\right|_{t=0}\Big(Q_{u^{R_0+t}}\circ \Phi_t\big(D_{u^{R_0}}(\xi_0^*)\big)\Big)\right\|_{W^{1,p}} \leq C_5\left\|\left.\frac{d}{dt}\right|_{t=0}u^{R_0+t}\right\|_{W^{1,\infty}}\|\Phi_t(D_{u^{R_0}}(\xi_0^*))\|_{L^p}.\]
Lemma \ref{w1pur} was proved exactly by showing
\[\left\|\left.\frac{d}{dt}\right|_{t=0}u^{R_0+t}\right\|_{W^{1,\infty}}\leq C_2\frac{1}{R_0}.\]
Finally we see
\[\|\Phi_t(D_{u^{R_0}}(\xi_0^*))\|_{L^p} \leq C_6\|\xi_0^*\|_{W^{1,p}}\leq C_7(\delta R_0)^{-2/p}\]
where $C_6,C_7$ are constants independent of $R_0$; the second inequality uses (\ref{sizeofxi}). Putting the last two equations together we see that the second term of (\ref{threeterms}) has the desired bound, that is
\begin{equation}\label{secondterm}
\left\|\left.\frac{d}{dt}\right|_{t=0}\Big(Q_t\big(D_{u^{R_0}}(\xi_0^*)\big)\Big)\right\|_{W^{1,p}}\leq C_8\frac{1}{R_0}\frac{1}{(\delta R_0)^{2/p}}.
\end{equation}

\noindent\underline{Step 3}: Finally, we bound the $W^{1,p}$ norm of the third term of (\ref{threeterms}). Again, $\|Q_{u^{R}}\|$ is bounded independently of $R$, so we need to bound $\left\|\left.\frac{d}{dt}\right|_{t=0}(d_0f_t(\xi_0^*))\right\|_{L^p}.$
\begin{align*}
&\left.\frac{d}{dt}\right|_{t=0}(d_0f_t(\xi_0^*))\\ =& \left.\frac{d}{dt}\right|_{t=0}\Big(\Phi_t\circ D_{u^{R_0+t}}\big(\Phi_t(\xi_0^*)\big)\Big)\\
=&\left.\widetilde{\nabla}_t\right|_{t=0}\Big(D_{u^{R_0+t}}\big(\Phi_t(\xi_0^*)\big)\Big)\\
=&\frac{1}{2}\left.\widetilde{\nabla}_t\right|_{t=0}\left(\nabla(\Phi_t(\xi_0^*)) + J(u^{R_0+t})\nabla\Phi_t(\xi_0^*)\circ j_{\Sigma} - J(u^{R_0+t})(\nabla_{\Phi_t(\xi_0^*)}J)(u^{R_0+t})\partial_J(u^{R_0+t})\right).
\end{align*}
This gives us three summands we need to bound. The relevant quantities we need to bound are
\begin{equation}\label{del}
\left\|\left.\widetilde{\nabla}_t\right|_{t=0}\partial_J(u^{R_0+t})\right\|_{L^p}
\end{equation}
\begin{equation}\label{del3}
\left\|\left.\widetilde{\nabla}_t\right|_{t=0}\nabla\big(\Phi_t(\xi_0^*)\big)\right\|_{L^p}.
\end{equation}
We first focus on (\ref{del}). This is supported on the annulus $\frac{1}{\delta R_0}\leq|z|\leq\frac{2}{\delta R_0}$ where
\begin{equation}\label{del2}
\partial_J\left(u^{R_0+t}\right) = \partial_J\Big(\exp_x\big(\rho\big(\delta(R_0+t)z\big)\zeta^0(z)\big)\Big).
\end{equation}
Similar to the proof of Lemma \ref{w1pur}, we bound the $L^p$ norm in (\ref{del}) by uniformly bounding and then integrating. The right hand side of (\ref{del2}) can be expressed as a sum of two terms. The first involves the first derivatives of $\rho(\delta (R_0+t) z)$ \big(which grow like $\delta (R_0+t)$\big) multiplied by $\zeta_0(z)$. Taking the derivative of this with respect to $t$, we see that these derivatives are uniformly bounded by a constant times $\delta$ times the size of $\zeta^0(z)$. By Lemma \ref{zetaob}, the size of $\zeta^0(z)$ is bounded by a constant times $1/\delta R_0$. Therefore the first terms are bounded by a constant times $1/R_0$. The second term involves $\rho\big(\delta(R_0+t)z\big)$ multiplied by the first derivatives of $\zeta^0(z)$. Similar to the proof of Lemma \ref{w1pur}, the derivative of such terms with respect to $t$ is uniformly bounded by a constant times $\frac{1}{R_0}$. Therefore, all terms are bounded by a constant times $\frac{1}{R_0}$ and since we are integrating over a region of area at most $\frac{4\pi}{(\delta R_0)^2}$, we see that the $L^p$ norm $(\ref{del})$ is bounded by $\frac{1}{R_0}\frac{1}{(\delta R_0)^{2/p}}$. The same bound holds for $(\ref{del2})$ by similar reasoning. Therefore, the $W^{1,p}$ norm of the third term of (\ref{threeterms}) has the desired bound, that is
\begin{equation}\label{thirdterm}
\left\|Q_0\left(\left.\frac{d}{dt}\right|_{t=0}\big(d_0f_t(\xi_0^*)\big)\right)\right\|_{W^{1,p}}\leq C_9\frac{1}{R_0}\frac{1}{(\delta R_0)^{2/p}}.
\end{equation}

We finally, note that we worked entirely in the chart $T_0$, but exactly the same argument applies to $T_\infty$ by the symmetry of our construction. Therefore together, (\ref{firstterm}), (\ref{secondterm}), and (\ref{thirdterm}) prove Lemma \ref{mainlemma}.
\end{proof}
\subsection{Proof of Corollary \ref{corc1easy}}\label{corproof}
\begin{proof}
It suffices to consider real gluing parameters $\lambda$ and prove $C^1$ differentiability at $\lambda=0$. When $\lambda$ is real, $|\lambda|^{p-1}\lambda = \lambda^p$. The gluing theorem in \cite{JHOL} states that the gluing map is $C^\infty$ for $\lambda\neq0$. To prove that $\widetilde{\lambda} = \lambda^{1/p}$ is the appropriate gluing parameter, we will first prove that the gluing parameter $s = R^{-\frac{2}{p}}$ gives bounded derivatives away from zero. This is a simple application of the chain rule. Let $h(s)$ represent the gluing map (\ref{glkmap}) with gluing parameter $s$.
\[\left|\frac{d}{ds}h(s)\right| = \left|\frac{dh}{dR}\frac{dR}{ds}\right| = \left|\frac{\frac{dh}{dR}}{\frac{ds}{dR}}\right| \leq \left|C\frac{R^{-\frac{2}{p}-1}}{R^{-\frac{2}{p}-1}}\right| = C.\]
where the last inequality uses Theorem \ref{mainthm}. Thus, the gluing map $h(s)$ is continuous for all $s$ (by the gluing theorem of \cite{JHOL}) and smooth with bounded derivative for $s\neq0$. Therefore, reparametrizing to $s^{1+\varepsilon}$, gives a $C^1$ function with respect to $s$ that has derivative equal to zero at $s=0$. The proof of this for a function $h:\mathbb{R}\rightarrow\mathbb{R}$ is an easy calculus exercise that we do below. The general result follows easily.

Consider $h:\mathbb{R}\rightarrow\mathbb{R}$ that is continuous everywhere and smooth with derivative bounded by $C$ away from zero, let $\varepsilon>0$, and suppose we are given $\eta>0$. Then
\[\frac{|h(s^{1+\varepsilon})-h(0)|}{s} \leq C\frac{s^{1+\varepsilon}}{s}\]
by applying the mean value theorem to the function $h(x^{1+\varepsilon})$ on the interval $[0,s]$. Therefore, since $\varepsilon>0$, by choosing $|s|$ small enough we can ensure $\frac{|h(s^{1+\varepsilon})-h(0)|}{s} \leq \eta$ and hence $h(s^{1+\varepsilon})$ is differentiable with respect to $s$ at $s=0$ and has derivative equal to zero.

Finally, $\lambda=R^{-2}e^{i\theta}$ corresponds to the coordinates obtained by cross ratios as described in Section \ref{crcoordinates}. Thus we can choose $\varepsilon>0$ so that $1+\varepsilon = p$ and hence
\[\widetilde{\lambda}|\widetilde{\lambda}|^{\varepsilon} = R^{-2(1+\varepsilon)/p}e^{i\theta} = R^{-2}e^{i\theta} = \lambda.\]
\end{proof}

\bibliographystyle{amsalpha}
\bibliography{research}

\providecommand{\bysame}{\leavevmode\hbox to3em{\hrulefill}\thinspace}
\providecommand{\MR}{\relax\ifhmode\unskip\space\fi MR }
\providecommand{\MRhref}[2]{%
  \href{http://www.ams.org/mathscinet-getitem?mr=#1}{#2}
}
\providecommand{\href}[2]{#2}
\begin{thebibliography}{FOOO09}

\bibitem[Cas]{axiomme}
Robert Castellano, \emph{Genus zero {G}romov-{W}itten axioms via {K}uranishi
  atlases}, arXiv:1601.04048.

\bibitem[FO99]{fo}
Kenji Fukaya and Kaoru Ono, \emph{Arnold conjecture and {G}romov-{W}itten
  invariants}, Topology \textbf{38} (1999), 933--1048.

\bibitem[FOOO]{fooo12}
Kenji Fukaya, Yong-Geun Oh, Hiroshi Ohta, and Kaoru Ono, \emph{Technical detail
  on {K}uranishi structure and virtual fundamental chain}, arXiv:1209.4410.

\bibitem[FOOO09]{fooo}
\bysame, \emph{Lagrangian intersection theory, anomaly and obstruction, parts
  {I} and {II}}, AMS/IP Studies in Advanced Mathematics, Providence, RI and
  Somerville, MA, 2009.

\bibitem[GW11]{wg}
Eduardo Gonzalez and Chris Woodward, \emph{Deformations of sympletic vortices},
  Annals of Global Analysis and Geometry \textbf{39} (2011), 45--82.

\bibitem[HWZ]{hwz1}
Helmut Hofer, Krzysztof Wysocki, and Eduard Zehnder, \emph{Applications of
  polyfold theory {I}: {T}he polyfolds of {G}romov-{W}itten theory},
  arXiv:1107.2097.

\bibitem[HWZ08]{hwzdm}
\bysame, \emph{Deligne-mumford-type spaces with a view towards symplectic field
  theory}, http://www.cims.nyu.edu/~hofer/polyfolds/lecture7-9.pdf, 2008,
  [Online; accessed 31-March-2016].

\bibitem[KM94]{axioms}
Maxim Kontsevich and Yuri Manin, \emph{Gromov-{W}itten classes, quantum
  cohomology, and enumerative geometry}, Communications in Mathematical Physics
  \textbf{164} (1994), 525--562.

\bibitem[Knu83]{knudsen}
Finn~F. Knudsen, \emph{The projectivity of moduli spaces of stable curves
  {II}}, Functional Analysis and its Applications \textbf{19} (1983), 161--199.

\bibitem[LS]{lisheng}
An-Min Li and Li~Sheng, \emph{The exponential decay of gluing maps and
  {G}romov-{W}itten invariants}, arXiv:1506.0633.

\bibitem[LT98]{litian}
Jun Li and Gang Tian, \emph{Virtual moduli cycles and {G}romov-{W}itten
  invariants for general symplectic manifolds}, Topics in symplectic
  4-manifolds (Cambridge, MA), International Press, 1998, pp.~47--83.

\bibitem[McD]{notes}
Dusa McDuff, \emph{Notes on {K}uranishi atlases}, arXiv:1411.4306.

\bibitem[MS12]{JHOL}
Dusa McDuff and Dietmar Salamon, \emph{J-holomorphic curves and symplectic
  topology}, American Mathematical Society, Providence, RI, 2012.

\bibitem[MWa]{mwfund}
Dusa McDuff and Katrin Wehrheim, \emph{The fundamental class of smooth
  {K}uranishi atlases with trivial isotropy}, arXiv:1508.01560.

\bibitem[MWb]{mwiso}
\bysame, \emph{Smooth {K}uranishi atlases with isotropy}, arXiv:1508.01556.

\bibitem[MWc]{mwtop}
\bysame, \emph{The topology of {K}uranishi atlases}, arXiv:1508:01844.

\bibitem[Par]{pardon}
John Pardon, \emph{An algebraic approach to virtual fundamental cycles on
  moduli spaces of {J}-holomorphic curves}, arXiv:1309.2370.

\bibitem[Rua99]{ruanvir}
Yongbin Ruan, \emph{Virtual neighborhoods and pseudoholomorphic curves},
  Turkish J. Math (1999), 161--231.

\bibitem[Sie99]{siebert}
Bernd Siebert, \emph{Symplectic {G}romov-{W}itten invariants}, New Trends in
  Algebraic Geometry (Cantanese, Peters, Reid, and Hulek, eds.), Cambridge
  University Press, 1999, pp.~375--424.

\end{thebibliography}

\end{document}